\documentclass[letterpaper, 10pt, dvipsnames]{article}

\usepackage{type1ec}
\usepackage[T1]{fontenc}
\usepackage[utf8]{inputenc}    
\usepackage[english]{babel}
\usepackage[normalem]{ulem}
\usepackage{dsfont, bm, pifont, mathrsfs}
\usepackage{stmaryrd}
\usepackage{bbm}
\usepackage{enumitem}
\usepackage{bold-extra}
\usepackage{float, graphicx, wrapfig, tikz}
\usetikzlibrary{positioning, fit}
\usepackage[margin=0.8in]{subcaption}
\usepackage[font=small, margin=0.6in]{caption}

\usepackage{amsthm, amsmath, amsfonts, amssymb, mathrsfs, mathtools}
\usepackage[alphabetic]{amsrefs}
\usepackage{hyperref, xcolor, titlesec} 
\hypersetup{colorlinks = true, urlcolor = RoyalBlue!70!Black,linkcolor=BrickRed, citecolor=Green!80!Yellow, bookmarksopen = true}
\usepackage[nomarginpar]{geometry}
\numberwithin{equation}{section}
\usepackage[boxsize = .3em]{ytableau}
\usepackage{cleveref}
\DeclareMathOperator{\Id}{Id}

\DeclareMathOperator{\Var}{Var}
\renewcommand{\Im}{\operatorname{Im}}
\renewcommand{\Re}{\operatorname{Re}}
\DeclareMathOperator{\Tr}{Tr}
\DeclareMathOperator{\tr}{tr}

\newcommand{\e}{\mathrm{e}}
\renewcommand{\i}{\mathrm{i}}
\let\d\relax
\newcommand{\d}{\mathrm{d}}

\newcommand{\E}{\mathds{E}}

\renewcommand{\O}{\mathbf{O}}

\newcommand{\Cbb}{\mathbb{C}}

\newcommand{\Rbb}{\mathbb{R}}

\newcommand{\Bcal}{\mathcal{B}}
\newcommand{\Ccal}{\mathcal{C}}
\newcommand{\Dcal}{\mathcal{D}}

\newcommand{\Fcal}{\mathcal{F}}

\newcommand{\Hcal}{\mathcal{H}}
\newcommand{\Ical}{\mathcal{I}}

\newcommand{\Mcal}{\mathcal{M}}

\newcommand{\Pcal}{\mathcal{P}}
\newcommand{\Qcal}{\mathcal{Q}}
\newcommand{\Rcal}{\mathcal{R}}
\newcommand{\Scal}{\mathcal{S}}

\newcommand{\Ycal}{\mathcal{Y}}

\newcommand{\Escr}{\mathscr{E}}

\newcommand{\Efr}{\mathfrak{E}}

\renewcommand{\epsilon}{\varepsilon}
\renewcommand{\phi}{\varphi}

\let\O\relax
\newcommand{\O}[1]{\mathcal{O}\left(#1\right)}

\newcommand{\Pds}{\mathds{P}}
\newcommand{\Eds}{\mathds{E}}
\newcommand{\cp}{\color{magenta}}

\newcommand{\eer}{\normalcolor}

\newcommand{\unn}[2]{\left[\!\left[#1,#2\right]\!\right]}

\newtheorem{ccounter}{ccounter}[section]
\newtheorem{theorem}[ccounter]{Theorem}
\newtheorem{lemma}[ccounter]{Lemma}
\newtheorem{corollary}[ccounter]{Corollary}
\newtheorem{definition}[ccounter]{Definition}
\newtheorem{proposition}[ccounter]{Proposition}

\theoremstyle{definition}
\newtheorem{remark}[ccounter]{Remark}

\titleformat{\section}[block]{\normalfont\filcenter}{\large\Roman{section}.}{.7em}{\large\scshape}
\newcommand{\appendixswitch}{\titleformat{\section}[block]{\normalfont\filcenter}{\large\Alph{section}.}{.7em}{\large\scshape}}
\titleformat{\subsection}[runin]{\normalfont}{\it\bf \thesubsection .}{.5em}{\it}[.]
\titleformat{\subsubsection}[runin]{\normalfont}{\bf \thesubsubsection .}{.5em}{\bf}[.]
\titleformat*{\paragraph}{\itshape\mdseries}
\renewcommand{\abstractname}{\normalsize\normalfont\scshape Abstract}

\begin{document}
\tikzset{every node/.style={circle, minimum size=.1cm, inner sep = 2pt, scale=1}}

%Weak

\title{\vspace{-5ex}\bfseries\scshape{Local law and delocalization for the Sachdev--Ye--Kitaev model}}
\author{L. \textsc{Benigni}
\\\vspace{-0.15cm}\footnotesize{\it{Universit\'e de Montr\'eal}}\\\footnotesize{\it{lucas.benigni@umontreal.ca}} \and G. \textsc{Cipolloni}\\\vspace{-0.15cm}\footnotesize{\it{ Universit\`a di Roma Tor Vergata}}\\\footnotesize{\it{cipolloni@axp.mat.uniroma2.it}}}
\date{}
\maketitle
\renewcommand{\abstractname}{\normalsize\normalfont\scshape Abstract}

\begin{abstract}
    We establish a mesoscopic local law and quantitative eigenvector-delocalization estimates for the Sachdev--Ye--Kitaev model (SYK) Hamiltonian of $N$ interacting Majorana fermions subject to a $q$-body interaction. For even $q\ll N^{\frac{1}{2}}$, we prove that the normalized Stieltjes transform is approximated, uniformly on bounded energy intervals, by an explicit deterministic profile which we construct as an asymptotic expansion in the proportion $\alpha_{N,q}$ of anticommuting $q$-body interaction terms. 
    The result holds both on the full Hilbert space and in each fermion-parity sector. We derive mesoscopic eigenvalue counting and spectral form factor estimates. For fixed $q$, we further obtain high-probability $\ell^\infty$-delocalization bounds in any deterministic orthonormal basis for individual bulk eigenvectors. These are the first mesoscopic laws and eigenvector delocalization estimates for the SYK Hamiltonian, which we obtain using the resolvent method.
\end{abstract}
\section{Introduction and main results}
The Sachdev--Ye--Kitaev model originated from a random spin system introduced  in \cite{sachdevye}. Their starting point was a Heisenberg quantum magnet built from a collection of $N$ quantum spins and an infinite-range Hamiltonian of the form 
\[
H_{\mathrm{SY}} = \frac{1}{\sqrt{N\Mcal}}\sum_{1\leqslant i<j\leqslant N}J_{ij}S_i\cdot S_j,
\]
where the interaction coefficients $J_{ij}$ are independent centered Gaussian variables with fixed variance. The main novelty of \cite{sachdevye} was to consider spins from a high-dimensional space $\mathrm{SU}(\Mcal)$, the case of $\mathrm{SU}(2)$ was first considered in \cite{brai1980replica}, and to consider the limit where $N$ and then $\Mcal$ becomes large.
%We write $\Mcal$ here rather than $M$, the latter being reserved from \eqref{eq:sykhamil} on for the number $\binom Nq$ of interaction terms of $H_q$.
It was then observed that, in this limit, the expected correlation functions are described by deterministic nonlinear integral equations which have solutions whose correlations decay as a power-law instead of exponentially, also called a \emph{gapless spin-fluid phase} in the physics literature. 

Kitaev in \cites{Kitaev1, Kitaev2} introduced a simpler model in which the spin variables are replaced by Majorana fermions thus now depending on a single dimension. For $N$ even, let $\gamma_1,\dots,\gamma_N$ be Majorana fermions satisfying the relation 
$
\gamma_i\gamma_j+\gamma_j\gamma_i = 2\delta_{ij}\Id.
$
We then define the model as, with $M=\binom{N}{q}$, 
\begin{equation}
H_q = \frac{\i^{\binom{q}{2}}}{\sqrt{M}}\sum_{i_1<\dots<i_q}J_{i_1,\dots,i_q}\gamma_{i_1}\dots\gamma_{i_q}
\quad\text{with}\quad 
J_{i_1,\dots,i_q}\sim \mathcal{N}(0,1)\text{ i.i.d.}
\end{equation}
This is what is now known as the Sachdev--Ye--Kitaev (SYK) model. Even though this model and the one introduced by Sachdev and Ye are different random operators on different Hilbert spaces, their connections come from the fact that their two-point functions satisfy closely related nonlinear self-consistency equations. Namely, define for $\tau\in (0,\beta)$, the averaged two-point function
\[
G_N(\tau) = \frac{1}{N}\sum_{i=1}^N \Eds\left[  
    \frac{\Tr\left(\e^{-\beta H_q}\gamma_i(\tau)\gamma_i(0)\right)}{\Tr(\e^{-\beta H_q})}
\right]
\quad\text{where}\quad 
\gamma_i(\tau) = \e^{\tau H_q}\gamma_i\e^{-\tau H_q}.
\]
We can extend $G_N$ to $\Rbb$ by fermionic antiperiodicity i.e. $G_N(\tau+\beta)=-G_N(\tau)$. Here, and only in this paragraph, $\tau$ denotes the imaginary time; from Section~\ref{sec:llaw} on, and throughout the statements of our results, $\tau\in\{\text{Full},+,-\}$ labels the fermion-parity sector. Then for large $N$, it was observed in \cites{maldacena2016remarks, polchinski2016spectrum} that $G_N$ should converge to a deterministic function $G$ characterized by the nonlinear  distributional equation for $\tau\in(0,\beta)$
\[
\partial_\tau G(\tau) - \int_0^\beta G(\tau-s)^{q-1}G(s)\d s = \delta(\tau).
\]
In particular, in the low-temperature regime $\beta \gg 1$ and for $\vert \tau\vert \ll \beta$, we obtain the approximate solution 
\[
{G}_{\mathrm{LT}}(\tau) \asymp \mathrm{sign}(\tau)\pi^{\frac{2}{q}}\left({\beta\sin\left(\pi\frac{\vert \tau\vert}{\beta}\right)}\right)^{-\frac{2}{q}}
\]
showing a power-law singularity. This has been shown rigorously in \cite{he2026spectral}.

In the mathematical treatment of the model, one of the first results was the convergence of the global law for the SYK model in \cite{feng2019spectrum} using the moment method. They show that as long as $q\ll \sqrt{N}$ the limiting eigenvalue distribution is given by the standard Gaussian law. Namely, if one denotes by $\lambda_1\leqslant \dots\leqslant \lambda_D$ the eigenvalues of $H_q$\footnote{Note that $\lambda_i\in\Rbb$ since $H$ is Hermitian through the anticommutation relation.} then we have that
    \begin{equation}\label{eq:global}
    \mu_N \coloneqq \frac{1}{2^{\frac{N}{2}}}\sum_{i=1}^{2^{\frac{N}{2}}} \delta_{\lambda_i}
    \xRightarrow[N\to\infty]{}\mu \coloneqq \frac{1}{\sqrt{2\pi}}\e^{-\frac{x^2}{2}}\d x
    \quad a.s.
    \end{equation}
These results were strengthened in follow-up works by the same authors in \cites{feng2020spectrum, feng2021spectrum}, where the authors proved a central limit theorem for linear statistics as well as a result on concentration of measures. If $q\asymp \sqrt{N}$, the limiting global law is given by another distribution linked to the $q$-Hermite polynomials, while if $q\gg \sqrt{N}$, the limiting law is given by the semicircle distribution as for Wigner matrices. This phenomenon for the limiting density of states was first observed for mixed $p$-spin models in \cites{erdos2014phase, keating2015}, using similar methods. In fact, for any $q\ll\sqrt{N}$, the density of states at finite $N$ is described more precisely by the \emph{$Q$-Gaussian} law $\nu_Q$ of Bo$\dot{z}$ejko and Speicher \cites{bozejko91, bozejko97}, with $Q=1-2\alpha_{N,q}$ and $\alpha_{N,q}$ defined in \eqref{eq:alphanqasymp} below. 

On the global scale, such corrections to the Gaussian profile were computed by means of the moment method in the physics literature, see e.g. \cite{garciagarcia2016}. More precisely, the $Q$-Hermite approximation to the finite-$N$ density of states was obtained in \cite{garciagarcia2017} by resumming the crossing contributions to the moments, the moments of $H_q$ were then computed exactly up to order $N^{-2}$ in \cite{garciagarcia2018}, and up to order $N^{-3}$ in \cite{jia2018}, by enumerating chord-intersection graphs; see also \cite{jia2020}, where the fluctuations of the $Q$-Hermite coefficients are used to study the number variance and the spectral form factor. In particular, it is already visible from these expansions that the density of states of $H_q$ and the $Q$-Gaussian law disagree at order $N^{-2}$.

One outcome of Theorem~\ref{theo:locallaw} below is that this description of the density of states persists all the way down to mesoscopic spectral scales, and that it can be pushed to an arbitrary order in $\alpha_{N,q}$. To the best of our knowledge, the full expansion has not appeared before, even on global scales: the mechanism which produces it is entirely different from, and considerably lighter than, the enumeration of intersection graphs, since each order is obtained from the previous ones by a single explicit integration of a first order linear differential equation, see Definition~\ref{def:thetas} below. We stress that the double-scaled regime $q\asymp\sqrt{N}$, in which a far more detailed and genuinely nonperturbative picture is available through chord diagrams and the associated transfer-matrix and quantum-group structures \cites{berkooz2019, berkooz2020susy, berkooz2021complex}, is not covered by our results; we refer the reader to the review \cite{berkoozmamroud2025} and to the references therein. \eer It is important to note that all these results concern the global properties of the spectrum. 

The spectral edge of SYK has seen substantial study recently, by means of quite different techniques. The SYK model exhibits a striking separation between its bulk and edge scales. While we saw that the limiting eigenvalue distribution is the Gaussian law, and most eigenvalues lie on an order-one scale, the largest eigenvalue is of order $\mathcal{O}(\sqrt{N})$. This behavior reflects the highly correlated structure of the Hamiltonian: it is a Gaussian linear combination of $\sim N^q$ terms acting on a Hilbert space of dimension $2^{\frac{N}{2}}$. The spectral edge thus occurs on a scale independent of the global Gaussian limit, a behavior that differs sharply from classical random matrix ensembles where typically the limiting density has compact support and the largest eigenvalue remains on the same scale. The work \cite{feng2019spectrum} gives an upper bound on the spectral edge which was sharpened in terms of constant in \cite{hastings2022}. A first lower bound was obtained in the physics literature in \cite{anschuetz2025}. Very recently, \cite{he2026spectral} gives the almost sure convergence of $N^{-\frac{1}{2}}\lambda_D$ for the
SYK model; see also \cite{basu2026sharp} for sharp bounds on the spectral edge of the SYK model (and a sparse version of it) for any $q$. 
For the free energy, the annealed limit was recently computed in \cites{gamarnik2026free}.

While the global distribution of eigenvalues and the spectral edge are now somewhat understood, the microscopic and mesoscopic theory beyond the edge is open and the goal of this work is to push further in this direction. For local statistics, considering the quartic SYK for simplicity (see \cites{Kanazawa2017, sun2020periodic} for an analogous prediction for general $q$), a fascinating behavior is suggested by the physics literature in the works \cites{you2017sachdev, Kanazawa2017}: for a bulk energy $E$, after  selecting a parity sector and removing some symmetry-enforced multiplicity of eigenvalues, then along a sequence with fixed $N\mod 8$,
\[
\sum_{j=1}\delta_{\frac{\lambda_j-E}{\Delta_N(E)}}
\Longrightarrow\mathrm{Sine}_\beta 
\quad\text{where}\quad 
\beta = \left\{ 
    \begin{array}{ll}
    1 &\text{if }N = 0\mod 8,\\
    2 &\text{if }N = 2,6 \mod 8,\\
    4 &\text{if }N = 4 \mod 8,
    \end{array}
    \right.
\]
and $\Delta_N(E) = 2^{-\frac{N}{2}}(\rho_N(E))^{-1}$ with $\rho_N(E)$ the density of states. For control of eigenvectors, the delocalization and eigenstate thermalization hypothesis is suggested in the physics literature in \cites{Sonner2017, haque2019}, where they also study the spectral form factor of the SYK model.  However, to the best of our knowledge, there were no mathematical results on eigenvector delocalization or local law for this model previous to this work. In this article we aim to give the first results in this direction by understanding the convergence \eqref{eq:global} from \cite{feng2019spectrum} at a smaller spectral scale. In particular, we give an alternative proof of the convergence in \cite{feng2019spectrum} which does not rely on the moment method. As a corollary of this result we also present the asymptotic for fairly short, but still $N$-dependent, times of the spectral form factor.  Moreover, we give the first rigorous delocalization bounds for bulk eigenvectors. Our main model of interest is the SYK Hamiltonian $H_q$ for fixed $q$; however, for most results, our approach works for any $q\ll \sqrt{N}$, we thus consider this case directly. The case $q\gtrsim\sqrt{N}$ likely requires a different approach, we will develop this in future works. 

Our proof of the local law is based on a resolvent approach adapted to the Clifford structure of the SYK Hamiltonian.  For the averaged local law, Gaussian integration by parts gives an approximate differential equation for the expected Stieltjes transform, of the form
\[
f'(z)+zf(z)+1=\sum_{r\geqslant 1}\frac{c_r\alpha_{N,q}^r}{(2r+1)!}f^{(2r+1)}(z),
\]
where $\alpha_{N,q}$ denotes the proportion of anticommuting interaction terms (see \eqref{eq:alphanqasymp} below) and the constants $c_r$ are bounded. Its leading part, obtained by discarding all the terms proportional to $\alpha_{N,q}$, is solved by the Stieltjes transform $m$ of the standard Gaussian law. The whole equation can then be solved perturbatively in $\alpha_{N,q}$ within the linear span of the derivatives of $m$: the reason is that the differential operator $f\mapsto f'+zf$ maps $m^{(k)}$ to a multiple of $m^{(k-1)}$, so that each order of the expansion is obtained from the previous ones by a single explicit integration. Truncating the resulting expansion after $R$ terms produces a deterministic profile which solves the equation up to an error of order $\alpha_{N,q}^{R+1}$; for $R=1$ this profile coincides, up to an error of order $\alpha_{N,q}^{2}$, with the Stieltjes transform of the $Q$-Gaussian measure of \cites{bozejko91,bozejko97} with $Q=1-2\alpha_{N,q}$, but for $R\geqslant2$ it does not. 

A stability argument  for the resulting differential equation, and Gaussian concentration then yield the local law.  We stress that unlike in the Wigner case (and its generalization), for the SYK model the expected Stieltjes transform is an approximate solution of an ordinary differential equation rather than a polynomial equation.  The short-time control of the spectral-form-factor then follows by Fourier analysis, as a corollary of the averaged local law. For the isotropic local law and thus delocalization estimates on eigenvectors, the general approach is similar, but it requires important new inputs. In particular, one needs to control objects of the form
\[
\langle {\bm x},G\gamma_{i_1}\cdots \gamma_{i_q} G{\bm y}\rangle
\]
in order to apply Gaussian concentration as before. Here ${\bm x}, {\bm y}$ are deterministic unit vectors and $G=G(z):=(H_q-z)^{-1}$ is the resolvent. The main difficulty is thus to sum the square of this quantity over all $q$-body Clifford monomials without losing the exponentially large Hilbert-space dimension. For this purpose we rely on state-of-the-art fermionic operator estimates \cites{christiansen2024hilbert, visconti2026}.  Thus the two main novelties are the use of the sparse anticommutation geometry of the Clifford family as the small parameter in the averaged local law, and the fermionic many-body estimate that makes isotropic concentration possible despite the exponentially large matrix dimension. 
While in this work we specifically focus on the SYK model, we expect that the approach used in this work has the potential to give analogous results for other many-body Hamiltonians.

Although the scales obtained here are substantially weaker than the optimal local laws available for classical random-matrix ensembles, they should be viewed in a different context. The SYK Hamiltonian acts on a Hilbert space of dimension $2^{\frac{N}{2}}$, while being generated by only polynomially many ($\sim N^q$) random variables through strongly correlated, noncommuting many-body interactions. It also exhibits features absent from standard one-particle models, including nontrivial symmetry sectors and a separation between an order-one Gaussian bulk and a spectral edge of order $\sqrt N$. The extensive theory of strong local laws and eigenvector universality for classical random matrices does not directly apply in this setting, and rigorous local laws for genuinely interacting many-body Hamiltonians remain comparatively scarce. The present results therefore provide a first step toward a mathematical understanding of mesoscopic spectral statistics and eigenstate thermalization in the SYK model.
\subsection{Definition of the model}
Let $N$ be even and $\gamma_1,\dots,\gamma_N$ be Majorana Fermions satisfying the relation
\[
\gamma_i\gamma_j+\gamma_j\gamma_i = 2\delta_{ij}\Id_D
\quad\text{with}\quad 
D = 2^{n}\quad\text{and}\quad n=\frac{N}{2}.
\] For an explicit representation of the Majorana fermions, we can consider the Hilbert space 
\[
\Hcal_N = (\Cbb^2)^{\otimes n}
\quad\text{such that }\quad 
\dim(\Hcal_N) = D.
\]
From the Pauli matrices 
\[
\sigma_x = \begin{bmatrix} 
0 & 1\\1 & 0
\end{bmatrix},
\quad 
\sigma_y = \begin{bmatrix}
    0 & -\i\\ \i & 0
\end{bmatrix},
\quad\text{and}
\quad
\sigma_z = \begin{bmatrix}
    1 & 0 \\ 0 & -1
\end{bmatrix},
\]
we can use the Jordan--Wigner representation of the Majorana Fermions 
\[
\gamma_{2j-1} = \sigma_z ^{\otimes (j-1)}\otimes \sigma_x\otimes \Id_2^{\otimes (n-j)}
\quad\text{and}\quad 
\gamma_{2j} = \sigma_z^{\otimes (j-1)}\otimes \sigma_y\otimes \Id_2^{\otimes (n-j)}
\quad\text{for}\quad 1\leqslant j\leqslant n.
\]
In particular, we have that the corresponding Clifford algebra is isomorphic to the space of matrices of size $2^{n}$ 
\[
\mathrm{Cl}_{N}(\Cbb) \coloneqq 
\langle e_1,\dots,e_N:e_ie_j+e_je_i=2\delta_{ij}\rangle \cong \Cbb^{D\times D}.
\]
For a given $q$-subset $A=\{i_1<\dots<i_q\}\subset \unn{1}{N}$ we define 
\[
\Gamma_A = \Gamma_{i_1,\dots,i_q} \coloneqq \i^{\binom{q}{2}} \gamma_{i_1}\dots\gamma_{i_q}.
\]
We thus define our model as, with $M=\binom{N}{q}$,
\begin{equation}
\label{eq:sykhamil}
H_q = \frac{1}{\sqrt{M}}\sum_{i_1<\dots<i_q}J_{i_1,\dots,i_q}\Gamma_{i_1,\dots i_q}
=
\frac{1}{\sqrt{M}}\sum_{\substack{A\subset\unn{1}{N}\\\vert A\vert = q}}J_A\Gamma_A
\quad\text{with}\quad 
J_A = J_{i_1,\dots,i_q}\sim \mathcal{N}(0,1)\text{ i.i.d.}
\end{equation}
Define the fermion-parity operator
\[
P_N = (-\i)^n\gamma_1\dots\gamma_{N} = \sigma_z^{\otimes n}.
\]
Notice that it is a self-adjoint involution, since 
\[
P_N^\ast = P_N,\quad P_N^2 = \Id_D, 
\quad\text{and also} \quad P_N\gamma_j = -\gamma_jP_N.
\]
This gives the natural decomposition of the Hilbert space $\Hcal_N$:
\[
\Hcal_N = \Hcal_N^+ \oplus\Hcal_N^-
\quad\text{with}\quad 
\Hcal_N^{\pm} = \ker(P_N \mp\Id_D)
\quad\text{and}\quad 
D^{\pm} \coloneqq \dim(\Hcal_N^{\pm}) = \frac{D}{2}.
\]
Since we have the identity 
\[
P_N\Gamma_A = (-1)^{\vert A\vert}\Gamma_AP_N,
\]
we see that if $q$ is even, every $\Gamma_A$ preserves the parity sectors and we can write $H_q$ in the block form 
\[
H_{q} = \begin{bmatrix}
    H_q^+ & 0\\ 0 & H_q^-
\end{bmatrix}
\quad\text{on}\quad 
\Hcal_N^+\oplus \Hcal^-_N.
\]
We can define $\Tr$ the usual unnormalized matrix trace on $\Hcal_N$ and $\Tr^{\pm}$ the one on $\Hcal_N^{\pm}$. We can then define for an operator $X$ commuting with $P_N$, 
\[
\tr(X) \coloneqq \frac{1}{D}\Tr(X),
\quad 
\tr_{\pm}(X) \coloneqq \frac{2}{D}\Tr(\Pi_N^{\pm}X)
\quad\text{where}\quad 
\Pi_N^{\pm}\text{ are the orthogonal projections }
\Pi_N^{\pm} = \frac{1}{2}(\Id_D\pm P_N).
\]
In particular we have the identities 
\[
\tr_{\pm}(X) = \tr(X)\pm \tr(P_NX)
\quad\text{and}\quad 
\tr(X) = \frac{1}{2}(\tr_+(X)+\tr_-(X)).
\]We denote for $z\in \Cbb_+$ and $\tau\in\{\text{Full},+,-\}$
\[
G_\tau(z) = (H_q^{\tau}-z\Id_{D^\tau})^{-1},\quad s_{N,\tau}(z) =\tr_\tau(G_\tau(z))
\quad\text{and} \quad 
m_{N,\tau}(z) = \Eds\left[s_{N,\tau}(z)\right].
\]
For any subset $A\subset\unn{1}{N}$ with $\vert A\vert =q$, we also define the parameter
\begin{equation}
\label{eq:alphanqasymp}
\alpha_{N,q} = \frac{1}{M}\left\vert 
    \left\{
        B\subset\unn{1}{N},\,\vert B\vert =q,\,\Gamma_A\Gamma_B = -\Gamma_B\Gamma_A  
    \right\}    
\right\vert
=
\frac{1}{M}\sum_{\substack{0\leqslant r\leqslant q\\ r\text{ odd}}} 
\binom{q}{r}\binom{N-q}{q-r} = \frac{q^2}{N}+\O{\frac{q^4}{N^2}},
\end{equation}
where the asymptotics holds when $q\ll \sqrt{N}$. Note that $\alpha_{N,q}>0$ whenever $2\leqslant q$ and $N\geqslant 2q-1$, since the term $r=1$ of the sum is already positive; this will matter in Definition~\ref{def:thetas}, where the constants $c_r$ are defined by dividing by $\alpha_{N,q}^r$. Note that from the convergence \eqref{eq:global} of \cite{feng2019spectrum}, giving the asymptotic eigenvalue density of $H_q$ through the moment method,  we can obtain pointwise convergence of the Stieltjes transform: for $z\in \Cbb_+$
    \[
    s_N(z) \xrightarrow[N\to\infty]{a.s} m(z)= \int_\Rbb \frac{\e^{-\frac{x^2}{2}}\d x}{\sqrt{2\pi}(x-z)}
    \quad\text{which satisfies }
    m'(z)+zm(z)+1=0
    \quad\text{with}\quad m(z)\sim-\frac{1}{z} \text{ as }\vert z\vert\to\infty.
    \]

Our first result is a local law strengthening the convergence above to finer spectral scales. On local scales, the deterministic approximation of $s_N$ is still given at leading order by $m(z)$, but we also identify \emph{all} the sub-leading corrections in $\alpha_{N,q}$, which is what allows us to reach an error which is an arbitrarily large power of $N^{-1}$ for fixed $q$. Before stating the result we must therefore construct these corrections.

The construction rests on the elementary observation, proved in Lemma~\ref{lem:gaussprofile} below, that the first order differential operator
\begin{equation}
\label{eq:defL}
L[f](z)\coloneqq f'(z)+zf(z)
\end{equation}
maps each derivative of $m$ to a multiple of the previous one, namely $L[m^{(k)}]=-k\,m^{(k-1)}$ for $k\geqslant1$, while $L[m]=-1$. Consequently, every inhomogeneous equation $L[h]=m^{(j)}$ is solved explicitly by $h=-\frac{1}{j+1}m^{(j+1)}$, and this is its only solution which vanishes as $\Im z\to\infty$; indeed, the solutions of the homogeneous equation $L[h]=0$ are the multiples of $\e^{-\frac{z^2}{2}}$, which are unbounded along vertical lines unless they vanish identically. This is what generates the following hierarchy.

\begin{definition}
\label{def:thetas}
    Let $\mu_{2k}=\mu_{2k}(N,q)\coloneqq \Eds\left[\tr\left(H_q^{2k}\right)\right]$ be the even moments of the density of states of $H_q$, the odd ones vanishing since $H_q$ and $-H_q$ have the same law, so that $\mu_0=\mu_2=1$ and $\mu_4=3-2\alpha_{N,q}$ by \eqref{eq:fourthmomentsyk} below. Define recursively the constants $c_r=c_r(N,q)$, for $r\geqslant1$, by
    \begin{equation}
    \label{eq:crdef}
    c_r\,\alpha_{N,q}^r\coloneqq (2r+1)\mu_{2r}-\mu_{2r+2}-\sum_{s=1}^{r-1}\binom{2r+1}{2s+1}c_s\,\alpha_{N,q}^s\mu_{2r-2s}.
    \end{equation}
    Set then $\Theta_0\coloneqq m$ and, recursively for $s\geqslant1$, let $\Theta_s$ be the unique solution vanishing as $\Im z\to\infty$ of
    \begin{equation}
    \label{eq:thetarec}
    L\left[\Theta_s\right](z)=\sum_{r=1}^{s}\frac{c_r}{(2r+1)!}\,\Theta_{s-r}^{(2r+1)}(z).
    \end{equation}
    Finally, for an integer $R\geqslant0$ we set
    \begin{equation}
    \label{eq:defphiR}
    \Phi_R\coloneqq \sum_{s=0}^{R}\alpha_{N,q}^s\,\Theta_s.
    \end{equation}
\end{definition}

\begin{remark}
Denote by $\kappa_j$ the cumulants of $\E \mu_N$. Then, by the standard cumulant relation formula
\[
\mu_{2r+2}=(2r+1)\mu_{2r}+\sum_{s=1}^r\binom{2r+1}{2s+1}\kappa_{2s+2}\mu_{2r-2s},
\]
we readily see that $c_r\alpha_{N,q}^r=-\kappa_{2r+2}$.
\end{remark}

Although $\mu_{2k}$ is defined through the trace on the full space, the same quantity is obtained in either parity sector as soon as $2kq<N$, so that Definition~\ref{def:thetas} produces one and the same profile $\Phi_R$ for the three sectors $\tau\in\{\text{Full},+,-\}$. 

Two comments are in order. First, the recursion \eqref{eq:thetarec} is nothing but the requirement that $\Phi_R$ solve, up to an error of order $\alpha_{N,q}^{R+1}$, the differential equation
\begin{equation}
\label{eq:heuristicode}
L[f](z)+1=\sum_{r\geqslant1}\frac{c_r\alpha_{N,q}^r}{(2r+1)!}f^{(2r+1)}(z)
\end{equation}
which is satisfied approximately by $m_{N,\tau}$, as we shall see in Proposition~\ref{pro:highode}; one simply collects the terms of equal order in $\alpha_{N,q}$ on both sides.

The structure of the profiles $\Theta_s$ is described by the following lemma, whose proof is postponed to Section~\ref{sec:llaw}.
\begin{lemma}
\label{lem:thetastruct}
    For every $s\geqslant1$ the function $\Theta_s$ is well defined and
    \[
    \Theta_s\in\mathrm{span}\left\{m^{(2s+2)},m^{(2s+4)},\dots,m^{(4s)}\right\},
    \]
     with coefficients which are universal polynomials in $c_1,\dots,c_s$. In particular $\Theta_s=\mathcal{D}_sm$ for a constant-coefficient differential operator $\mathcal{D}_s$ involving only even derivatives of order at most $4s$. 
    Moreover, for all integers $s\geqslant1$ and $l\geqslant0$ there is $K_{s,l}>0$, depending also on $\max_{r\leqslant s}\vert c_r\vert$, such that
    \begin{equation}
    \label{eq:thetabounds}
    \left\vert \Theta_s^{(l)}(E+\i\eta)\right\vert \leqslant K_{s,l}
    \quad\text{for }E\in\Rbb,\ 0<\eta\leqslant2,
    \qquad\text{and}\qquad
    \left\vert \Theta_s^{(l)}(E+\i\eta)\right\vert \leqslant \frac{K_{s,l}}{\eta^{2s+3+l}}
    \quad\text{for }\eta\geqslant1.
    \end{equation}
    Finally, $\Phi_R$ is the Stieltjes transform of the signed density
    \begin{equation}
    \label{eq:defvarrhoR}
    \varrho_R\coloneqq \rho_G+\sum_{s=1}^R\alpha_{N,q}^s\,\mathcal{D}_s\rho_G,
    \qquad\text{where}\quad \rho_G(x)\coloneqq\frac{1}{\sqrt{2\pi}}\e^{-\frac{x^2}{2}},
    \end{equation}
    which is a real Schwartz function of total mass one.
\end{lemma}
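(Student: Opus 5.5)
The plan is an induction on $s$, driven by the identity $L[m^{(k)}]=-k\,m^{(k-1)}$ (Lemma~\ref{lem:gaussprofile}). Since $L$ is linear and maps $\mathrm{span}\{m^{(j)}\}$ into itself, the inhomogeneous equation $L[h]=\sum_j a_j m^{(j)}$ has the explicit solution $h=-\sum_j \frac{a_j}{j+1}m^{(j+1)}$. This solution is unique among functions vanishing as $\Im z\to\infty$: the homogeneous solutions are multiples of $\e^{-z^2/2}$, which blow up along vertical lines, while each $m^{(j)}$ decays like $\vert z\vert^{-j-1}$.

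\textbf{Induction.} Assume that for every $t<s$ we have $\Theta_t\in\mathrm{span}\{m^{(2t+2)},\dots,m^{(4t)}\}$, with $\Theta_0=m$ as the base case. Consider the term $\Theta_{s-r}^{(2r+1)}$ on the right-hand side of \eqref{eq:thetarec}.
\begin{itemize}
\item For $r=s$ it is $m^{(2s+1)}$.
\item For $r<s$ it lies in $\mathrm{span}\{m^{(2(s-r)+2+2r+1)},\dots,m^{(4(s-r)+2r+1)}\}$, that is, in $\mathrm{span}\{m^{(2s+3)},\dots,m^{(4s-2r+1)}\}$ with only odd orders.
\end{itemize}
Hence the right-hand side lies in $\mathrm{span}\{m^{(2s+1)},\dots,m^{(4s-1)}\}$, using odd orders only. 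Applying the explicit inverse raises each order by one, so $\Theta_s\in\mathrm{span}\{m^{(2s+2)},\dots,m^{(4s)}\}$ with even orders only. The coefficients come from rational multiples of $c_r$ times the earlier coefficients, which gives universal polynomials in $c_1,\dots,c_s$ by induction. This defines $\mathcal D_s$ as a constant-coefficient operator built from even derivatives of order at most $4s$.

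\textbf{Bounds.} It suffices to bound $m^{(k)}$.
\begin{itemize}
\item For $\eta\geqslant1$: $m^{(k)}(z)=k!\int\rho_G(x)(x-z)^{-k-1}\d x$, so $\vert m^{(k)}\vert\leqslant k!\,\eta^{-k-1}$. The lowest order in $\Theta_s^{(l)}$ is $2s+2+l$, giving the decay $\eta^{-(2s+3+l)}$. The higher orders decay faster, so they are dominated when $\eta\geqslant1$.
\item For $0<\eta\leqslant2$: integrating by parts $k$ times gives $m^{(k)}(z)=\int\rho_G^{(k)}(x)(x-z)^{-1}\d x$. This is the Stieltjes transform of the Schwartz function $\rho_G^{(k)}=(-1)^kHe_k\rho_G$, where $He_k$ is the Hermite polynomial. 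Stieltjes transforms of Schwartz functions are bounded uniformly up to the real axis, since they equal $\pi\i\,\rho_G^{(k)}(E)$ plus a Hilbert transform bounded by $C_k\Vert \rho_G^{(k)}\Vert_{C^1}$ together with its decay.
\end{itemize}
This gives both inequalities in \eqref{eq:thetabounds}, with $K_{s,l}$ depending on the coefficients and hence on $\max_{r\leqslant s}\vert c_r\vert$.

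\textbf{Density representation.} The same integration by parts shows that $m^{(k)}$ is the Stieltjes transform of $\rho_G^{(k)}$. By linearity, $\Theta_s=\mathcal D_s m$ is the Stieltjes transform of $\mathcal D_s\rho_G$, so $\Phi_R$ is the Stieltjes transform of $\varrho_R$. This is a real Schwartz function, because it is a real polynomial times $\rho_G$ (the $c_r$ are real). Its total mass is one because $\int\rho_G^{(k)}=0$ for every $k\geqslant1$, and $\mathcal D_s$ contains only derivatives of order at least $2s+2\geqslant4$.

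The main obstacle is the uniform boundedness near the real axis. This is handled by the Schwartz Stieltjes bound above: split $\int\frac{f(x)}{x-z}\d x$ into $\vert x-E\vert\leqslant1$, where we subtract $f(E)$ and use the Lipschitz bound on $f$, and the complementary region, where $f$ is integrable.
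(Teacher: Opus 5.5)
Your proof is correct and follows the paper's argument. The shared steps are:
\begin{itemize}
\item the same induction, which inverts $L$ on the span of odd derivatives of $m$ via $L[m^{(k)}]=-k\,m^{(k-1)}$;
\item the same index bookkeeping, giving even orders in $[2s+2,4s]$;
\item the same $\eta\geqslant1$ decay from the lowest order $2s+2+l$;
\item the same integration-by-parts identification of $\Theta_s$ as the Stieltjes transform of $\mathcal{D}_s\rho_G$.
\end{itemize}
The only difference is that you re-derive the uniform bound on $m^{(k)}$ near the real axis by splitting the Stieltjes integral of the Schwartz function $\rho_G^{(k)}$. The paper instead cites \eqref{eq:gaussderivunif} from Lemma~\ref{lem:gaussprofile}, which it proves by shifting the contour to $\Im\zeta=-3$; both arguments are valid, and yours even gives the bound for all $\eta>0$.
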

We are now ready to state our first main result.
\begin{theorem}\label{theo:locallaw}
    Let $\varepsilon\in(0,\frac{1}{2})$, $q$ be an even integer such that $2\leqslant q \leqslant N^{\frac{1}{2}-\varepsilon}$, $L>0$, $R\geqslant0$ an integer, and $\tau\in\{\text{Full},+,-\}$. Set
    \[
    \Dcal_{L,\varepsilon}= \left\{z=E+\i\eta:\,\,\vert E\vert \leqslant L,\,\, (\log N)^{\varepsilon}\alpha_{N,q}^{\frac{1}{2}}\leqslant \eta\leqslant 1\right\}.
    \]
    There exist absolute constants $c,C>0$ and a constant $C_R>0$, depending only on $L$ and $R$, such that for every $z=E+\i\eta\in \Dcal_{L,\varepsilon}$ and every $0<t\leqslant1$ we have
    \begin{equation}
    \label{eq:llawave}
    \Pds\left(
        \left\vert s_{N,\tau}(z)-\Phi_{R}(z)\right\vert > \frac{C_R\,\alpha_{N,q}^{R+1}}{\eta^{2R+2}}+t
    \right)
    \leqslant C\exp\left(-c\,M\eta^{3}t^2\right)
    \end{equation}
    for $N$ large enough. Moreover, taking a union bound over a fine net of $\Dcal_{L,\varepsilon}$, we obtain the uniform estimate
    \begin{equation}
    \label{eq:llawuniform}
    \Pds\left(
        \exists\,z=E+\i\eta\in\Dcal_{L,\varepsilon}:\,\,
        \left\vert s_{N,\tau}(z)-\Phi_{R}(z)\right\vert
        \geqslant
        C_R\left(
            \frac{\alpha_{N,q}^{R+1}}{\eta^{2R+2}}
            +
            \frac{\sqrt{\log M}}{\sqrt{M}\,\eta^{\frac32}}
        \right)
    \right)\leqslant M^{-10}.
    \end{equation}
    \end{theorem}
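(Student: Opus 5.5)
The proof splits $s_{N,\tau}-\Phi_R$ into a fluctuation part $s_{N,\tau}-m_{N,\tau}$ and a deterministic part $m_{N,\tau}-\Phi_R$. Each part is handled separately, and a net argument then gives \eqref{eq:llawuniform}.

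\emph{Fluctuations.} View $s_{N,\tau}(z)$ as a function of the Gaussian vector $(J_A)_{|A|=q}\in\Rbb^M$ and apply Gaussian concentration (for instance, the Lipschitz version of the Tsirelson--Ibragimov--Sudakov inequality applied separately to the real and imaginary parts). Differentiating gives
\[
\partial_{J_A}s_{N,\tau}=-\frac{1}{\sqrt M}\tr_\tau\!\left(G\Gamma_A G\right)=-\frac{1}{\sqrt M}\tr_\tau\!\left(\Gamma_A G^2\right).
\]
The Lipschitz constant is therefore controlled by $\frac1M\sum_A|\tr_\tau(\Gamma_AG^2)|^2$. The family $\{\Gamma_A\}$ is orthonormal for the Hilbert--Schmidt inner product $\langle X,Y\rangle=\tr(X^*Y)$; in a sector one uses $\tr_\pm$ together with the fact that the $\Gamma_A$ with $|A|=q$ even remain orthogonal after compression by $\Pi^\pm_N$, at least when $2q<N$. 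Bessel's inequality then bounds the sum by $\tr_\tau(|G^2|^2)\le \eta^{-3}\tr_\tau(\Im G)/1\le \eta^{-3}\cdot\eta^{-1}\Im s$. A cruder route is also available: $\tr(|G|^4)\le\eta^{-2}\tr|G|^2=\eta^{-3}\Im s_{N,\tau}$, which is at most $\eta^{-4}$. To obtain the exponent $M\eta^3$, one uses that the gradient norm squared is at most $M^{-1}\eta^{-3}\Im s_{N,\tau}$, and that $\Im s_{N,\tau}$ is bounded with high probability once the local law holds on a slightly larger scale. The cleanest way to implement this is a bootstrap in $\eta$ from $\eta=1$ downwards in small steps, or concentration for the truncated function $\min(\Im s,K)$. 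I expect this to yield $\Pds(|s-m_{N,\tau}|>t)\le C\e^{-cM\eta^3t^2}$.

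\emph{Deterministic part.} This is the main step. Gaussian integration by parts on $\E\,\tr(H_qG)=\frac1{\sqrt M}\sum_A\E[J_A\tr(\Gamma_AG)]$ gives $-\frac1M\sum_A\E\,\tr(\Gamma_AG\Gamma_AG)$. Write $\Gamma_AG\Gamma_A$ as the resolvent of $\Gamma_AH_q\Gamma_A$. Since $\Gamma_A$ commutes or anticommutes with each $\Gamma_B$, this equals $H_q-2H_A^{\rm anti}$, where $H_A^{\rm anti}$ is the sum over the $\alpha_{N,q}M$ anticommuting terms. Expanding the resolvent in the resolvent identity with respect to $2H^{\rm anti}_A$, and using that $H^{\rm anti}_A$ is again a Gaussian field whose variance is proportional to $\alpha_{N,q}$, produces the hierarchy $zm+1+m'=\sum_r c_r\alpha^r m^{(2r+1)}/(2r+1)!+\mathcal E_R$. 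The coefficients are identified with the cumulant-type constants of Definition~\ref{def:thetas} by matching moments, which is the content of Proposition~\ref{pro:highode}. The error satisfies $|\mathcal E_R|\lesssim \alpha^{R+1}\eta^{-(2R+3)}$ plus the fluctuation terms $\E|s-m|^2$, which are negligible by the previous step.

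\emph{Stability and conclusion.} By construction $\Phi_R$ solves the same equation up to $O(\alpha^{R+1})$ times derivatives of $\Theta_s$; these are bounded by Lemma~\ref{lem:thetastruct}. Set $d=m_{N,\tau}-\Phi_R$. Then $L[d]=F$, where $F$ consists of the error terms plus $\sum_r c_r\alpha^r d^{(2r+1)}/(2r+1)!$. Solve this by integrating vertically from $\Im z=\infty$ using the integrating factor $\e^{z^2/2}$: along the vertical line $d(E+\i\eta)=-\int_\eta^\infty \e^{((E+\i s)^2-(E+\i\eta)^2)/2}F\,\i\,\d s$, and the kernel is bounded for $s\ge\eta$. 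The derivatives $d^{(2r+1)}$ are controlled by Cauchy estimates on discs of radius $\eta/2$, each costing $\eta^{-1}$. A downward induction on dyadic scales of $\eta$ then closes, since $\alpha\eta^{-2}\le(\log N)^{-2\varepsilon}\ll1$ on $\Dcal_{L,\varepsilon}$. This is exactly where the lower cutoff on $\eta$ enters, and it gives $|d|\le C_R\alpha^{R+1}\eta^{-2R-2}$. Combining this with the fluctuation bound proves \eqref{eq:llawave}. For \eqref{eq:llawuniform}, choose $t=C\sqrt{\log M}/\sqrt{M\eta^3}$, take a union bound over an $M^{-3}$-net of $\Dcal_{L,\varepsilon}$, and use that $s$ and $\Phi_R$ are $\eta^{-2}$-Lipschitz.

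The main obstacle is the stability step: the ODE has higher-order derivative terms whose control loses powers of $\eta$, and closing the induction requires a careful choice of scales.
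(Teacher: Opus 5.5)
Your architecture is the paper's: integration by parts with $\Gamma_AG\Gamma_A=(H_q-\Delta_A-z)^{-1}$, expansion in $\Delta_A$, identification of the $c_r$ from the large-$\eta$ moment expansion, vertical integration against $\e^{z^2/2}$, Bessel plus Ward for the gradient, and a net. However, the step you yourself single out as the main obstacle is not closed, and the mechanism you propose for it fails as stated. To bound $d=m_{N,\tau}-\Phi_R$ at height $\eta$ you integrate the source over heights $s\geqslant\eta$, and that source contains $\alpha^r d^{(2r+1)}(E+\i s)$. A Cauchy estimate at height $s$ needs $d$ on a disc reaching down to height $s/2$, or $(1-\theta)s$ for any fixed $\theta$. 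That is below the scale you are currently trying to reach, which a downward induction in $\eta$ has not yet controlled. Suppose you use there the only information available, the leading-order bound $|m_{N,\tau}-m|\lesssim\alpha(\Im w)^{-2}$. Then the derivative terms are of size $\alpha^{r+1}s^{-2r-3}$, and one vertical integration improves $d$ only to $\alpha^2\eta^{-4}$. Smallness of $\alpha\eta^{-2}$ does not by itself produce $\alpha^{R+1}\eta^{-2R-2}$. Iterating this improvement $R$ times, each time on a region reaching down to half the previous height, is exactly an induction on the order, not on the scale.

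That is the missing idea, and it is how the paper argues. It proves the order-$R$ bound on the whole region $\{\alpha_{N,q}\leqslant c_R^\ast\eta^2\}$, uniformly in $E\in\Rbb$. At order $R+1$ it takes $c_{R+1}^\ast\leqslant c_R^\ast/4$, so the order-$R$ estimate holds at every height $\geqslant\eta/2$. Since $\alpha^{R+1}\Theta_{R+1}$ is bounded by Lemma~\ref{lem:thetastruct}, this gives $|m_{N,\tau}-\Phi_{R+1}|\lesssim\alpha^{R+1}(\Im w)^{-2R-2}$ on the Cauchy discs. Hence $\alpha^r|d^{(2r+1)}|\lesssim\alpha^{R+1+r}s^{-2R-2r-3}$ with $r\geqslant1$, already of the size $\alpha^{R+2}s^{-2R-5}$ of $\Escr_{R+1}$ and $\Ycal_{R+1}$. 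A single vertical integration then gives order $R+1$ with no bootstrap.

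The only self-consistent step is the base case $R=0$, where no derivatives of $d$ occur. There every error carries a factor $\Im m_{N,\tau}$ from the Ward identity. Your bound on the truncation error silently assumes this factor is $O(1)$; the trivial bound $\eta^{-1}$ would lose a power of $\eta$. The paper closes a bound on $\sup_{E,\,\eta\leqslant s\leqslant1}\Im m_{N,\tau}(E+\i s)$. A downward induction in $\eta$ would also work there, precisely because no Cauchy estimate is needed.

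Two smaller remarks. First, no terms $\E|s-m|^2$ arise: integration by parts gives the single trace $\tr(G\Scal[G])$, so the equation for $m_{N,\tau}$ is closed and independent of the concentration step. Second, concentration needs no bootstrap or truncation. The bound $|\nabla s_{N,\tau}|^2\leqslant\Im s_{N,\tau}/(M\eta^3)$ makes $\sqrt{\Im s_{N,\tau}}$ globally $(2\sqrt M\eta^{3/2})^{-1}$-Lipschitz. With $\Im m_{N,\tau}\leqslant C$, the $L^p$ Gaussian Poincar\'e inequality and Markov's inequality then give $C\exp(-cM\eta^3t^2)$ directly.
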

 \begin{remark}
    The deterministic profile $\Phi_R$ can be replaced by a different profile $\widetilde{\Phi}_R$, which is now a Stieltjes transform of a probability measure, at the price of a further error $C_Re^{-c_r\alpha_{N,q}^{-1/2}}\eta^{-1}$. More precisely, by explicit computations is possible to see that $\widetilde{\Phi}_R$ is nothing else than the Stieltjes transofm of the positive part of $\rho_R$.
\end{remark} 

     We point out that the deterministic error in \eqref{eq:llawave} and \eqref{eq:llawuniform} is in fact uniform in $E\in\Rbb$, by Proposition~\ref{pro:higherstab} below; the parameter $L$ enters only through the cardinality of the net used in the union bound leading to \eqref{eq:llawuniform}. We shall use this uniformity over the whole real line in the proof of Lemma~\ref{lem:sff1}.
    The two error terms in \eqref{eq:llawuniform} play different roles: the first one is the accuracy of the deterministic profile $\Phi_R$, while the second one accounts for the fluctuation of $s_{N,\tau}$ around its expectation, and dominates as soon as $\alpha_{N,q}^{R+1}\eta^{-2R-\frac12}\leqslant M^{-\frac{1}{2}}\sqrt{\log M}$. This is the reason why we formulated \eqref{eq:llawave} with a free parameter $t$ rather than with a single error term. Note also that on $\Dcal_{L,\varepsilon}$ we have $\alpha_{N,q}\eta^{-2}\leqslant(\log N)^{-2\varepsilon}$, so that the deterministic error in \eqref{eq:llawave} is indeed small, for every fixed $R$, on the whole of $\Dcal_{L,\varepsilon}$.
    \begin{remark}
    \label{rem:extension}
      For fixed $q$ and $\eta\sim1$, the deterministic error in \eqref{eq:llawave} is of order $N^{-R-1}$, for an arbitrarily large $R>0$, while the fluctuation term in \eqref{eq:llawuniform} is of order $M^{-\frac12}\sqrt{\log M}\asymp N^{-\frac q2}\sqrt{\log N}$; the two become comparable when $R\approx \frac{q}{2}-1$. Thus, the larger $q$ becomes, the larger $R$ should be taken in the deterministic profile for it to be relevant.
    \end{remark}
    \eer

    As a first corollary, we obtain a quantitative description of the density of states on mesoscopic scales, in the form of a counting estimate for the eigenvalues in a mesoscopic interval. Since it is a classical consequence of the local law, we omit the proof. To lighten the notation we denote
    \begin{equation}
    \label{eq:deferrorR}
    \Efr_R(\eta)\coloneqq \frac{\alpha_{N,q}^{R+1}}{\eta^{2R+2}}+\frac{\sqrt{\log M}}{\sqrt{M}\,\eta^{\frac32}}
    \end{equation}
    \begin{corollary}
    \label{cor:counting}
        Let $\varepsilon\in(0,\frac{1}{2})$, $L>0$, let $R\geqslant0$ be an integer, let $q$ be even with $2\leqslant q\leqslant N^{\frac{1}{2}-\varepsilon}$, and let $\tau\in\{\text{Full},+,-\}$. There is $C_R>0$, depending only on $L$ and $R$, such that, for $N$ large enough, the following estimates hold with probability at least $1-M^{-10}$:
       for every $\eta$ and $w$ with $(\log N)^{\varepsilon}\alpha_{N,q}^{\frac{1}{2}}\leqslant\eta\leqslant w\leqslant1$ and $\Efr_R(\eta)\leqslant1$, and every $E\in[-L,L]$,
        \begin{equation}
        \label{eq:countingquant}
        \left\vert\frac{\vert \{i:\lambda_i^\tau \in [E-w,E+w]\}\vert}{2D^\tau w}
        -
        \frac{1}{2w}\int_{E-w}^{E+w} \varrho_R(x)\d x
        \right\vert
        \leqslant
        C_R\left(
        \frac{\eta}{w}\log\frac{4}{\eta}
        +\Efr_R(\eta)
        \right),
        \end{equation}
        with $\varrho_R$ the profile density of \eqref{eq:defvarrhoR}.
    \end{corollary}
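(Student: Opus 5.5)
The plan is the standard passage from the uniform Stieltjes-transform estimate \eqref{eq:llawuniform} of Theorem~\ref{theo:locallaw} to interval counting, via a smoothed indicator and the Helffer--Sjöstrand formula. Throughout we work on the event of probability at least $1-M^{-10}$ where \eqref{eq:llawuniform} holds. We will also use that the deterministic error is uniform in $E\in\Rbb$ (Proposition~\ref{pro:higherstab}) and that, by monotonicity in $\eta$ of $\Im s_{N,\tau}$ and of $\eta\,\Im s_{N,\tau}$, the bound propagates to all $\eta'\in[\eta,1]$ with error at most $C_R\Efr_R(\eta)$. For large $\eta'\ge1$, including the region away from $[-L,L]$, we control $s_{N,\tau}-\Phi_R$ by the global law: Gaussian concentration gives fluctuations of size $M^{-1/2}$, and the bounds \eqref{eq:thetabounds} together with the moment matching built into Definition~\ref{def:thetas} handle the deterministic part. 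Alternatively, we can enlarge the net in \eqref{eq:llawuniform} to $|E|\le L+1$ and use a cutoff.

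Fix $E\in[-L,L]$ and $\eta\le w$. Let $f$ be a smooth function equal to $1$ on $[E-w+\eta,E+w-\eta]$, supported in $[E-w,E+w]$, with $|f'|\lesssim\eta^{-1}$ and $|f''|\lesssim\eta^{-2}$, together with a second function $\tilde f$ equal to $1$ on $[E-w,E+w]$ and supported in the $\eta$-enlargement. Then $\frac1{D^\tau}\Tr f(H^\tau)\le \frac{|\{i:\lambda_i^\tau\in[E-w,E+w]\}|}{D^\tau}\le \frac1{D^\tau}\Tr\tilde f(H^\tau)$.

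We express $\tr_\tau f(H^\tau)-\int f\varrho_R$ by Helffer--Sjöstrand with a cutoff $\chi(y)$ equal to $1$ for $|y|\le 1$:
\[
\frac1\pi\int\!\!\int \Big(\i y f''(x)\chi(y)+\i(f(x)+\i yf'(x))\chi'(y)\Big)\big(s_{N,\tau}-\Phi_R\big)(x+\i y)\,\d x\,\d y .
\]
This is legitimate because $\Phi_R$ is the Stieltjes transform of the Schwartz function $\varrho_R$ (Lemma~\ref{lem:thetastruct}). We split the estimate into three regions.
\begin{enumerate}
\item The $\chi'$ terms live at $|y|\sim1$. Since $f$ has support of length $2w$, they contribute $O(w\,\Efr_R(1))$.
\item In the region $|y|\le\eta$ we integrate by parts once in $x$. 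We then use the trivial bound $|y\,\Im s|\le 1$ and $|y\,\Phi_R|\lesssim 1$. The latter holds because $\varrho_R$ is bounded, so $|\Phi_R(x+\i y)|\lesssim\log(4/y)$. Combined with $\int|f'|\lesssim1$, this gives an $O(\eta\log(4/\eta))$ contribution.
\item In the region $\eta\le y\le1$ we use $|y f''|\lesssim y\eta^{-2}$ on a set of $x$ of size $\eta$ near the two edges. After the usual integration by parts in $y$, this reduces to $\int_\eta^1|f'|\,|s-\Phi_R|\,\d y$. Since the error $\Efr_R(y)$ is decreasing in $y$, the contribution is $O(\Efr_R(\eta)\log(4/\eta))$. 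The logarithm can be avoided by exploiting the monotone structure more carefully, or absorbed, since the statement allows $\log(4/\eta)$ only on the $\eta/w$ term. The first choice would be a careful $y$-integration bounding $\int_\eta^1 \Efr_R(y)\,\d y\le C\,\Efr_R(\eta)\,\eta$ times $\eta^{-1}$.
\end{enumerate}

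Finally, the smoothing error $\int (\tilde f-f)\varrho_R=O(\eta)$ since $\varrho_R$ is bounded. Dividing by $2w$ yields \eqref{eq:countingquant}, with the $\frac{\eta}{w}\log\frac4\eta$ term coming from the edge smoothing and region 2. Uniformity over $E,\eta,w$ is automatic because we worked on a single high-probability event. The main obstacle is the bookkeeping in region 3: ensuring that the division by $w$ does not cost a factor $\eta^{-1}$ on $\Efr_R$. This is handled by the integration by parts in $y$, which transfers a derivative onto $f'$ supported on edge sets of length $\eta$.
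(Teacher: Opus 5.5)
The gap is in your region $0<y\leqslant\eta$ of the Helffer--Sj\"ostrand integral. There you use only the spectrum-independent bound $y\Im s_{N,\tau}(x+\i y)\leqslant1$, and this cannot give $O(\eta\log\frac4\eta)$.

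\begin{itemize}
\item Without integrating by parts, it yields $\int_0^\eta\int y\vert f''(x)\vert\,\Im s_{N,\tau}(x+\i y)\,\d x\,\d y\lesssim\int_0^\eta\eta^{-1}\,\d y\lesssim1$. That is an error of order $w^{-1}$ in \eqref{eq:countingquant}.
\item With the integration by parts in $x$ you propose, the integrand becomes $y f'(x)\,\partial_x\Im s_{N,\tau}(x+\i y)$. The best pointwise bound is $\vert y\,\partial_x s_{N,\tau}\vert\leqslant\Im s_{N,\tau}\leqslant y^{-1}$, so the $y$-integral diverges logarithmically at $0$.
\end{itemize}

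No spectrum-independent bound can work here. If a fixed fraction of the eigenvalues sat in one of the strips of width $\eta$ where $f$ drops from $1$ to $0$, this term would be of order one. Your bound $\vert\Phi_R(x+\i y)\vert\lesssim\log(4/y)$ only handles the deterministic half. It is not even needed, since $\Phi_R$ is bounded on $0<\Im z\leqslant2$ by \eqref{eq:gaussderivunif} and \eqref{eq:thetabounds}.

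The missing input is the local law at height $\eta$, transported below $\eta$ by monotonicity. The map $y\mapsto y\Im s_{N,\tau}(x+\i y)=\frac{1}{D^\tau}\sum_i\frac{y^2}{(\lambda_i^\tau-x)^2+y^2}$ is nondecreasing. Hence, on the event of \eqref{eq:llawuniform} (applied with $L+2$ in place of $L$), for $0<y\leqslant\eta$,
\[
y\Im s_{N,\tau}(x+\i y)\leqslant\eta\Im s_{N,\tau}(x+\i\eta)\leqslant\eta\left(\Im\Phi_R(x+\i\eta)+C_R\Efr_R(\eta)\right)\leqslant C_R\,\eta .
\]
This is exactly where the hypothesis $\Efr_R(\eta)\leqslant1$ enters, and your proposal never uses it. With this bound, no integration by parts is needed in that region, and it contributes $C_R\eta^{-1}\cdot\eta\cdot\eta=C_R\eta$. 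Your opening remark applies the monotonicity in the wrong direction. Above $\eta$ nothing is needed, since $\Efr_R$ is decreasing and the local law already holds there; it is below $\eta$ that monotonicity is indispensable.

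The rest of your scheme is sound once made precise:
\begin{itemize}
\item Take $\chi\equiv1$ on $[0,\frac12]$ with support in $[0,1]$, so every evaluation stays in the domain of the local law.
\item The $\chi'$ terms are then $O(\Efr_R(\frac12))$, not $O(w\,\Efr_R(1))$, because of the $yf'(x)\chi'(y)$ piece. This is harmless, since $\Efr_R(\frac12)\leqslant C_R\,\eta\,\Efr_R(\eta)$.
\item For $y\geqslant\eta$, integration by parts in $x$, Cauchy--Riemann and integration by parts in $y$ reduce everything to $\eta\,\Efr_R(\eta)+\int_\eta^1\Efr_R(y)\,\d y\leqslant C_R\,\eta\,\Efr_R(\eta)$. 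This holds because $\Efr_R(y)$ decays at least like $y^{-3/2}$.
\end{itemize}

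That last inequality is necessary, not optional. Your intermediate bound $O(\Efr_R(\eta)\log\frac4\eta)$ does not survive the division by $2w$: it leaves either a factor $w^{-1}$ or an extra logarithm on $\Efr_R(\eta)$. With these corrections the unnormalized error is $C_R(\eta+w\,\Efr_R(\eta))$, which gives \eqref{eq:countingquant}, even without the logarithm.
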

    The second term is the local law error, while the first one is the cost of resolving a sharp interval at scale $\eta$. The reference here is the profile density $\varrho_R$ itself, but the refinement it carries is invisible at this resolution: since $\eta\geqslant\alpha_{N,q}^{\frac12}$ on $\Dcal_{L,\varepsilon}$, the first term is at least $\alpha_{N,q}^{\frac12}w^{-1}$ and therefore always dominates $\Vert\varrho_R-\rho_G\Vert_\infty\leqslant C_R\alpha_{N,q}$, so that $\varrho_R$ may be replaced by the Gaussian density $\rho_G$ at no cost. The additional gain coming from the higher order profiles in \eqref{eq:countingquant} is thus not the reference but the \emph{rate}. Balancing the two by the choice $\eta=\big(w\,\alpha_{N,q}^{R+1}\big)^{\frac{1}{2R+3}}$ yields the error $\big(\alpha_{N,q}^{R+1}w^{-2R-2}\big)^{\frac{1}{2R+3}}\log N$, up to the fluctuation term. Concretely, for fixed $q\geqslant4$ and $w\sim1$ this choice is admissible and gives, with probability at least $1-M^{-10}$,
    \begin{equation}
    \label{eq:countingopt}
    \max_{\vert E\vert\leqslant L}
    \left\vert\frac{\vert \{i:\lambda_i^\tau \in [E-w,E+w]\}\vert}{2D^\tau w}
    -
    \frac{1}{2w}\int_{E-w}^{E+w} \varrho_R(x)\d x
    \right\vert
    \leqslant
    C_R\,N^{-\frac{R+1}{2R+3}}\log N,
    \end{equation}
    which improves from $N^{-\frac13}\log N$ at $R=0$ to any power of $N$ larger than $N^{-\frac12}$ as $R$ grows; the exponent $\frac12$ is the natural barrier here, since $\Dcal_{L,\varepsilon}$ does not reach below the scale $\alpha_{N,q}^{\frac12}$.
    \eer

    As another corollary, we can control the spectral form factor up to times of order $\sqrt{\log N}$. Here the deterministic profile enters again in an essential way, both through the reference function and through the length of the admissible time interval. By Lemma~\ref{lem:thetastruct}, \eqref{eq:defvarrhoR} and $\widehat{\rho_G^{(k)}}(t)=(\i t)^k\e^{-\frac{t^2}{2}}$, the Fourier transform of the profile density is
    \begin{equation}
    \label{eq:hatvarrho}
    \widehat{\varrho_R}(t)\coloneqq\int_\Rbb \e^{-\i tE}\varrho_R(E)\d E=\e^{-\frac{t^2}{2}}\,\pi_R(t),
    \qquad
    \pi_R(t)\coloneqq 1+\sum_{s=1}^R\alpha_{N,q}^s\,p_s(t),
    \end{equation}
    where $p_s$ is the explicit real even polynomial, with $2s+2\leqslant\deg p_s\leqslant 4s$, obtained from $\Theta_s=\mathcal{D}_sm$ by replacing every $\partial^k$ by $(\i t)^k$.
\begin{corollary}
\label{cor:sff}
Let $\varepsilon_0>0$, $q$ be even with $2\leqslant q\leqslant N^{\frac{1}{2}-\varepsilon_0}$, and let $R\geqslant0$ be an integer.
For $\tau\in\{\text{Full},+,-\}$, define the spectral form factor
\[
K_{N,\tau}(t)
=
\Eds\left[
\left|
\tr_\tau(\e^{-\i tH_q})
\right|^2
\right]\footnote{Here $\tr_\tau$ is the normalized trace, so that $K_{N,\tau}(0)=1$; the spectral form factor in the physics literature is often defined as $\Eds\vert \Tr(\e^{-\i tH_q})\vert^2$, i.e. it is $(D^\tau)^2$ times our definition.}.
\]
\eer
There is $C_R>0$, depending only on $R$, such that for every $\delta\in(0,1)$ the following holds for $N$ large enough. Let $T$ satisfy
\begin{equation}
\label{eq:sfftimerange}
1\leqslant T\leqslant
\sqrt{(1-\delta)\min\left(\frac{4(R+1)}{5}\log\frac{1}{\alpha_{N,q}},\,\log M\right)}.
\end{equation}
Then
\begin{equation}
\label{eq:boundsff}
\sup_{|t|\leqslant T}
\left|
K_{N,\tau}(t)-\left\vert\widehat{\varrho_R}(t)\right\vert^2
\right|
\leqslant
C_R\left[
\left(
\alpha_{N,q}^{R+1}T^{2R+2}
\right)^{\frac{4}{5}}
+
\frac{T^2}{M}
\right]
\leqslant
C_R\left[
\left(
\alpha_{N,q}\log\frac{1}{\alpha_{N,q}}
\right)^{\frac{4(R+1)}{5}}
+
\frac{\log M}{M}
\right].
\end{equation}
\end{corollary}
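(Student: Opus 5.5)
Write $K_{N,\tau}(t)=\vert \Eds\tr_\tau(\e^{-\i tH_q})\vert^2+\Var(\tr_\tau(\e^{-\i tH_q}))$ and treat the two pieces separately.

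\emph{Variance term.} The map $J\mapsto \tr_\tau(\e^{-\i tH_q})$ is Lipschitz in the Gaussian vector $(J_A)_A$. Its derivative in $J_A$ is $-\i tM^{-1/2}\int_0^1\tr_\tau(\e^{-\i s tH_q}\Gamma_A\e^{-\i(1-s)tH_q})\d s$. Each $\Gamma_A$ is unitary, so $\sum_A\vert\tr_\tau(X\Gamma_A)\vert^2\leqslant \tr_\tau(XX^\ast)\leqslant 1$ for unitary $X$, since the normalized $\Gamma_A$ are orthonormal in Hilbert--Schmidt and we use Bessel. This gives a gradient of squared norm at most $t^2/M$. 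The Gaussian Poincaré inequality then yields $\Var\leqslant T^2/M$, which is the second term of \eqref{eq:boundsff}.

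\emph{Mean term.} We approximate $\Eds\tr_\tau(\e^{-\i tH_q})$ by $\widehat{\varrho_R}(t)$ via Fourier analysis of the averaged local law. Fix $\eta\in(0,1]$ and write
\[
\Eds \tr_\tau(\e^{-\i tH})=\e^{\vert t\vert\eta}\int_\Rbb \e^{-\i tE}\,\tfrac1\pi\Im m_{N,\tau}(E+\i\eta)\,\d E,
\]
using that $\tfrac1\pi\Im m_{N,\tau}(\cdot+\i\eta)$ is the expected density convolved with the Cauchy kernel, whose Fourier transform is $\e^{-\eta\vert t\vert}$. The same identity holds for $\varrho_R$ and $\Phi_R$ by Lemma~\ref{lem:thetastruct}; we handle the signed density via the complex Stieltjes transform, i.e. $\frac{1}{2\pi\i}(\Phi_R(z)-\Phi_R(\bar z))$ for real $\varrho_R$. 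Hence the difference is bounded by
\[
\e^{T\eta}\int_\Rbb \vert m_{N,\tau}(E+\i\eta)-\Phi_R(E+\i\eta)\vert\,\d E .
\]
On the deterministic level, Proposition~\ref{pro:higherstab} gives $\vert m_{N,\tau}-\Phi_R\vert\leqslant C_R\alpha_{N,q}^{R+1}\eta^{-2R-2}$ uniformly in $E\in\Rbb$. This is the uniformity announced for Lemma~\ref{lem:sff1}, and we do not pay the fluctuation term since we work with expectations. For integrability in $E$ we split at $\vert E\vert\leqslant L_0$, with $L_0$ a large multiple of $\sqrt{\log(1/\alpha_{N,q})}$. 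On the tails, both the expected density of states (sub-Gaussian by concentration of moments, via $\mu_{2k}\leqslant(2k-1)!!$ up to constants) and $\varrho_R$ are tiny. The Cauchy-kernel tails of size $\eta/E^2$, which are not small, should instead be handled by a smoother cutoff: replace the Cauchy kernel by a mollifier with compact Fourier support, or subtract the common $\eta/(\pi E^2)$ tails, which cancel at leading order because both measures have unit mass and vanishing first moment. This gives
\[
\bigl\vert\Eds\tr_\tau(\e^{-\i tH})-\widehat{\varrho_R}(t)\bigr\vert\lesssim \e^{T\eta}\Bigl(L_0\,\alpha_{N,q}^{R+1}\eta^{-2R-2}+\eta\Bigr).
\]

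\emph{Main obstacle: optimization and the exponent $\tfrac45$.} The delicate step is choosing $\eta$ so that the factor $\e^{T\eta}$ does not destroy the bound. Choosing $\eta\sim (\alpha_{N,q}^{R+1}T^{2R+2})^{\theta}/T$ and balancing the terms is what produces the power $\tfrac45$ and the constraint \eqref{eq:sfftimerange}. The $T^{2R+2}$ comes from $\eta^{-2R-2}$ at $\eta\sim T^{-1}$-type scales, and the $\log(1/\alpha)$ restriction ensures $\e^{T\eta}$ stays bounded while the error is small. Since $\vert\widehat{\varrho_R}\vert\leqslant C$, squaring converts the mean error into the same order. The lossiest point is the tail/Cauchy treatment, and I would first try a Gaussian-regularized Fourier inversion, $\int \e^{-\i tE}\psi(E)\,\d\mu$ with $\psi$ smooth. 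Here Helffer--Sjöstrand with an almost-analytic extension of $\e^{-\i tE}\chi(E)$ lets one use the local law only down to $\eta\geqslant(\log N)^\varepsilon\alpha^{1/2}$. The derivatives of that extension cost powers of $T$, which I expect to be the precise source of the $\tfrac45$.
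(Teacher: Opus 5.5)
\paragraph{Verdict.} There is a genuine gap in the mean term; the variance part is fine.

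\paragraph{What matches the paper.} Your decomposition $K_{N,\tau}=\vert\Eds\tr_\tau(\e^{-\i tH_q})\vert^2+\Var(\tr_\tau(\e^{-\i tH_q}))$ is the paper's. So is the variance bound $t^2/M$ via Duhamel, Bessel and Gaussian Poincar\'e. The Poisson-smoothing bound $\vert\Eds\tr_\tau(\e^{-\i tH_q})-\widehat{\varrho_R}(t)\vert\leqslant\e^{\eta\vert t\vert}\Vert P_\eta*\nu_N\Vert_1$, with $\nu_N=\overline{\mu}_N-\varrho_R\,\d x$, also matches. So does the uniform-in-$E$ input $\vert P_\eta*\nu_N(E)\vert\leqslant A_\eta\coloneqq C_R\alpha_{N,q}^{R+1}\eta^{-2R-2}$ from Proposition~\ref{pro:higherstab}.

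\paragraph{Where it fails.} The gap is the tail part of $\Vert P_\eta*\nu_N\Vert_1$, which is exactly where the rate is decided. Your intermediate bound $\e^{T\eta}(L_0A_\eta+\eta)$ cannot give \eqref{eq:boundsff}.
\begin{itemize}
\item At $\eta\sim T^{-1}$ the additive $\eta$ is of order one when $T\sim1$.
\item Shrinking $\eta$ inflates $A_\eta$. The best balance, $\eta^{2R+3}\sim L_0\alpha_{N,q}^{R+1}$, gives an error of order $(L_0\alpha_{N,q}^{R+1})^{1/(2R+3)}$. For $R=0$, $T=1$ this is roughly $\alpha_{N,q}^{1/3}$ up to logarithms, instead of the claimed $\alpha_{N,q}^{4/5}$.
\item Your ansatz $\eta\sim X^\theta/T$ therefore balances at $\theta=1/(2R+3)$, and nothing in your scheme yields $\frac45$. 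Helffer--Sj\"ostrand derivative costs play no role here.
\item Even a careful version of your tail subtraction fails if it uses only the mass and first-moment cancellation you mention. It leaves a second-order Taylor remainder, i.e.\ tails of order $\eta K^{-3}$, and after balancing this still falls short of the $\frac45$ rate.
\end{itemize}

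\paragraph{The missing idea.} You need an $\eta$-independent tail bound from exact moment matching through order three.
\begin{itemize}
\item One has $\int x^k\nu_N(\d x)=0$ for $k=0,1,2,3$. Both measures have mass one. Odd moments vanish by $H_q\mapsto-H_q$ and by evenness of $\varrho_R$. Both second moments equal $1$, since $\varrho_R-\rho_G$ is a combination of $\rho_G^{(k)}$ with $k\geqslant4$.
\item Moreover $\int x^4\vert\nu_N\vert(\d x)\leqslant C_R$, because $\Eds\tr(H_q^4)=3-2\alpha_{N,q}$.
\item You may therefore replace $P_\eta(E-x)$ by its third-order Taylor remainder in $x$. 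For $\vert x\vert\leqslant K/2$ and $\vert E\vert>K$, the bound $\vert P_\eta^{(4)}(E)\vert\leqslant C\eta\vert E\vert^{-6}$ gives a contribution $C\eta K^{-5}$. For $\vert x\vert>K/2$, the fourth-moment bound gives $C_RK^{-4}$.
\item Hence $\Vert P_\eta*\nu_N\Vert_1\leqslant C_R(KA_\eta+K^{-4})$, uniformly in $\eta\leqslant1$.
\end{itemize}
There is then nothing to optimize in $\eta$. Take $\eta=1/T$, so that $\e^{\eta\vert t\vert}\leqslant\e$ automatically. Take $K=A_{1/T}^{-1/5}$, which gives $A_{1/T}^{4/5}=C_R(\alpha_{N,q}^{R+1}T^{2R+2})^{4/5}$. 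The exponent $\frac45$ is simply the balance of $KA$ against $K^{-4}$.

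The constraint \eqref{eq:sfftimerange} is not there to keep $\e^{T\eta}$ bounded. It only guarantees $\alpha_{N,q}T^2\leqslant c_R$, so that Proposition~\ref{pro:higherstab} applies at $\eta=1/T$ and $A_{1/T}$ is small, and it yields the second inequality in \eqref{eq:boundsff}.

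Your remark that $\mu_{2k}\leqslant(2k-1)!!$ is correct, by Wick with each pairing contributing a trace of $\pm\mathrm{Id}$. Combined with the same order-three cancellation, these sub-Gaussian tails would even improve the exponent, but they are not needed.
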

\begin{remark}
\label{rem:sffrange}

The two terms in the minimum in \eqref{eq:sfftimerange} have different origins, and only the first one is affected by $R$. It comes from the accuracy $\alpha_{N,q}^{R+1}$ of the deterministic profile. The second one comes from the variance of $\tr_\tau(\e^{-\i tH_q})$, which is of order $T^2M^{-1}$ by \eqref{eq:sffvariance} and is unrelated to the local law. Since $\log M\asymp q\log\frac Nq$ while $\log\frac{1}{\alpha_{N,q}}\asymp\log\frac{N}{q^2}$, the gain thus saturates at $T\asymp\sqrt{q\log N}$ for fixed $q$, when $R$ reaches $\frac{5q}{4}$ or so, and the admissible times remain logarithmic. We are in particular still very far from the Heisenberg time $T\asymp2^{\frac N2}$, at which the spectral form factor is expected to exhibit the local \eer random-matrix behaviour.
 \end{remark}

    While Theorem~\ref{theo:locallaw} hold for $q\ll \sqrt{N}$. We now restrict to fixed $q$, i.e. independent of $N$, and show a stronger delocalization bound for single eigenvectors. This will be a corollary of the following isotropic local law:
    \begin{theorem}
\label{theo:isolaw}
Fix any large $C_*>0$ and any arbitrary small $\delta\in(0,\frac{1}{2})$. Let $q$ be an even integer such that $4\leqslant q\leqslant C_*$, let $L>0$, fix $\tau\in\{+,-\}$, let $R\geqslant0$ be an integer, and let ${\bm x},{\bm y}\in\Hcal_N^\tau$ be deterministic unit vectors. Then, there are constants $C,c,C_L>0$, depending only on $L,\delta$, $R$, and $C_*$, such that, for all sufficiently large even $N$, every $0<t\leqslant 1$, and every $N^{-\frac{1}{2}+\delta}\leqslant\eta\leqslant 1$,
we have
\begin{equation}
\label{eq:isotropicmain}
\mathbb{P}\left(
\max_{|E|\leqslant L}
\left|
\left\langle {\bm x},
\left(G_\tau(E+\i\eta)-\Phi_{R}(E+\i\eta)\Id_{\Hcal_N^\tau}\right)
{\bm y}
\right\rangle
\right|
\geqslant
\frac{C\alpha_{N,q}^{R+1}}{\eta^{2R+2}}+t
\right)
\leqslant
C_L\left(1+\frac{1}{\eta^2t}\right)
\exp\left(-cN^{\frac{q}{2}}\eta^2t^2\right),
\end{equation}
where $\Phi_R$ is the deterministic profile of Definition~\ref{def:thetas}. In particular, since $q$ is fixed and $\eta\geqslant N^{-\frac{1}{2}+\delta}$, the deterministic error above is at most $CN^{-2\delta(R+1)}$, i.e. an arbitrarily small power of $N$.
\end{theorem}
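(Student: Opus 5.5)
The plan has two parts. First, control the expectation $\Eds\langle{\bm x},G_\tau{\bm y}\rangle$ by a differential equation mirroring Proposition~\ref{pro:highode}. Second, control the fluctuation by Gaussian concentration, using a fermionic operator bound on the Lipschitz constant. A union bound over a net in $E$ then gives the maximum over $|E|\leqslant L$.

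\textbf{Expectation.} Set $g(z)=\Eds\langle{\bm x},G_\tau(z){\bm y}\rangle$ and start from $zG=HG-\Id$. Gaussian integration by parts in $J_A$ gives
\[
\Eds\langle{\bm x}, HG{\bm y}\rangle=-\frac1M\sum_A\Eds\langle{\bm x},\Gamma_AG\Gamma_AG{\bm y}\rangle .
\]
The key structural point is that $\Gamma_AH\Gamma_A$ equals $H$ with the signs of all $J_B$ such that $\Gamma_B$ anticommutes with $\Gamma_A$ flipped; this is a fraction $\alpha_{N,q}$ of the terms. Expanding $\Gamma_AG\Gamma_A=(\Gamma_AH\Gamma_A-z)^{-1}$ around $G$ in the flipped part, the leading term is $-\frac1M\sum_A\Eds\langle{\bm x},G^2{\bm y}\rangle=-g'(z)$. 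This reproduces $g'+zg+1=0$ at leading order. The corrections come from the sign-flipped pieces; after averaging over $A$, they should reproduce exactly the same hierarchy of coefficients $c_r\alpha^r_{N,q}/(2r+1)!\,g^{(2r+1)}$ as in the averaged case. I would argue this by noting that the combinatorics depend only on the commutation pattern of the $\Gamma_B$'s and on Gaussian moments, not on the trace.

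Alternatively, and more cleanly, I would reduce to the trace. The law of $H_\tau$ is invariant under conjugation by the Clifford monomials of even degree, since these only flip signs of $J_B$'s. Hence $\Eds G_\tau$ commutes with every even monomial. For $N$ large these act irreducibly on $\Hcal^\tau_N$, so $\Eds G_\tau=m_{N,\tau}\Id$. Then $g=m_{N,\tau}\langle{\bm x},{\bm y}\rangle$, and Proposition~\ref{pro:higherstab} together with Theorem~\ref{theo:locallaw} gives $|g-\Phi_R\langle{\bm x},{\bm y}\rangle|\leqslant C\alpha_{N,q}^{R+1}\eta^{-2R-2}$ directly. This is the route I would take.

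\textbf{Fluctuation.} The map $J\mapsto\langle{\bm x},G{\bm y}\rangle$ has gradient components $-M^{-1/2}\langle{\bm x},G\Gamma_AG{\bm y}\rangle$. Gaussian concentration therefore requires a bound
\[
\sum_A|\langle{\bm x},G\Gamma_AG{\bm y}\rangle|^2\lesssim\eta^{-2}N^{q/2}\cdot\frac{M}{N^{q}}\sim \eta^{-2}\,\frac{M}{N^{q/2}} .
\]
Writing ${\bm u}=G^*{\bm x}$ and ${\bm v}=G{\bm y}$, both of norm at most $\eta^{-1}$, the sum becomes $\sum_A|\langle{\bm u},\Gamma_A{\bm v}\rangle|^2$. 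This is a sum over $q$-body Majorana monomials of squared matrix elements, and the naive bound is $M\eta^{-2}$. Here I would invoke the fermionic Hilbert--Schmidt-type estimates of \cite{christiansen2024hilbert,visconti2026}. These give $\sum_{|A|=q}|\langle{\bm u},\Gamma_A{\bm v}\rangle|^2\lesssim N^{q/2}\|{\bm u}\|^2\|{\bm v}\|^2$ for $q$ fixed, reflecting that $\Gamma_A$ can be grouped into $q/2$ pair operators $\sum_{ij}\gamma_i\gamma_j a_{ij}$, each with operator norm $\lesssim\sqrt N\|a\|_{HS}$. The Lipschitz constant is then of order $(N^{q/2}M^{-1})^{1/2}\eta^{-2}$. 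This gives tails $\exp(-cMN^{-q/2}\eta^4t^2)\ge\exp(-cN^{q/2}\eta^{4}t^2)$, up to constants depending on $q$.

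\textbf{Main obstacle.} The main obstacle is exactly this fermionic bound. It holds deterministically in ${\bm u},{\bm v}$, but the stated exponent has $\eta^2$ rather than $\eta^4$. To recover $\eta^2$, I would use the Ward identity $G^*G=\Im G/\eta$ together with a bootstrap: on the good event, $\|{\bm u}\|^2\leqslant\eta^{-1}\Im\langle{\bm x},G{\bm x}\rangle\lesssim\eta^{-1}$ by the isotropic law at a larger $\eta$, propagated downward in $\eta$. Concentration would then be applied to a Lipschitz truncation.

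\textbf{Conclusion.} Finally, take a net in $E$ of mesh $\eta^2t$, using that $z\mapsto G$ is $\eta^{-2}$-Lipschitz. This net produces the prefactor $(1+\eta^{-2}t^{-1})$. Combining it with the deterministic error from the expectation step yields \eqref{eq:isotropicmain}.
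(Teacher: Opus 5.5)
Your expectation step is the paper's: invariance of the law of $H_q$ under conjugation by even Clifford monomials gives $\Eds G_\tau=m_{N,\tau}\Id$ (Lemma~\ref{lem:exptrick}), and then Proposition~\ref{pro:higherstab} alone suffices; Theorem~\ref{theo:locallaw} is not needed. The other ingredients also match: the fermionic input (Proposition~\ref{pro:maincliffpro}), the Ward identity $\Vert G_\tau^\ast{\bm x}\Vert^2=\eta^{-1}Y_{\bm x}$ with $Y_{\bm x}\coloneqq\Im\langle{\bm x},G_\tau{\bm x}\rangle$, and the net of mesh $\eta^2t$.

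The gap is the step you flag yourself: the a priori control of $Y_{\bm x}$, which is what turns $\eta^4$ into $\eta^2$. Your downward bootstrap with a ``Lipschitz truncation'' is not specified, and its natural implementations do not give the stated exponent.
\begin{itemize}
\item A cutoff $F\chi(Y_{\bm x})\chi(Y_{\bm y})$ produces the term $\vert F\vert\,\vert\chi'(Y_{\bm x})\vert\,\vert\nabla Y_{\bm x}\vert$. Deterministically $\vert F\vert$ is only bounded by $(Y_{\bm x}/\eta)^{1/2}$, so this yields a Lipschitz constant of order $N^{-q/4}\eta^{-3/2}$, i.e.\ the exponent $N^{q/2}\eta^3t^2$.
\item Converting the tail bound at scale $2\eta$ into moments of $Y_{\bm x}$ via $Y_{\bm x}\le\eta^{-1}$ only works for $p\lesssim N^{q/2}\eta^2/\log(1/\eta)$. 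This loses a logarithm in the exponent when $t\asymp1$.
\end{itemize}

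The missing observation is that no multiscale argument is needed, because $Y_{\bm x}$ controls its own gradient. Proposition~\ref{pro:maincliffpro} and the Ward identity give $\sum_A\vert\partial_{J_A}Y_{\bm x}\vert^2\le C_qN^{-q/2}\eta^{-2}Y_{\bm x}^2$. Hence $\log Y_{\bm x}$ is \emph{globally} $C_qN^{-q/4}\eta^{-1}$-Lipschitz at the fixed scale $\eta$ (Lemma~\ref{lem:wardgradient}).

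The centring comes from an identity you already have: $\Eds Y_{\bm x}=\Im m_{N,\tau}(E+\i\eta)\le C$ by Lemma~\ref{lem:exptrick} and Lemma~\ref{lem:apriori}. By Markov the median of $\log Y_{\bm x}$ is therefore bounded, and Gaussian concentration gives a sub-Gaussian upper tail for $\log Y_{\bm x}$. Integrating this tail gives $\Vert Y_{\bm x}\Vert_p\le C$ for all $p\le cN^{q/2}\eta^2$ (Lemma~\ref{lem:wardmoments}). Alternatively, $\Vert Y_{\bm x}\Vert_p\le\Eds Y_{\bm x}+C\sqrt p\,N^{-q/4}\eta^{-1}\Vert Y_{\bm x}\Vert_p$ closes on itself.

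One then applies the Gaussian $L^p$ Poincar\'e inequality directly to $F$, which requires no global Lipschitz bound. By Cauchy--Schwarz, $\Vert F-\Eds F\Vert_p\le C\sqrt p\,N^{-q/4}\eta^{-1}\Vert Y_{\bm x}\Vert_p^{1/2}\Vert Y_{\bm y}\Vert_p^{1/2}\le C\sqrt p\,N^{-q/4}\eta^{-1}$. Markov with $p\asymp N^{q/2}\eta^2t^2$ then gives exactly $\exp(-cN^{q/2}\eta^2t^2)$.

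Your induction in $\eta$ could be rescued by propagating such moment bounds through the deterministic monotonicity $Y_{\bm x}(E+\i\eta)\le2Y_{\bm x}(E+2\i\eta)$. But restoring the constant at each step needs exactly this self-bounding gradient estimate, and once you use it the induction is superfluous.
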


As a corollary of this result we obtain eigenvector delocalization in any deterministic basis:
\begin{corollary}
\label{theo:delocinfinity}
Under the assumptions of Theorem~\ref{theo:isolaw}, for any deterministic orthonormal basis $({\bm u}_a^\tau)_{1\leqslant a\leqslant D^\tau}$ of $\Hcal_N^\tau$, we have
\begin{equation}
\label{eq:gooddeloc}
\mathbb{P}\left(
\max_{j:|\lambda_j^{(\tau)}|\leqslant L}
\max_{1\leqslant a\leqslant D^\tau}
\left|\left\langle {\bm u}_a^\tau,\psi_j^{(\tau)}\right\rangle\right|^2
\geqslant Cu
\right)
\leqslant
\exp\left(-cN^{\frac{q}{2}}u^2\right),
\end{equation}
for $N^{-\frac{1}{2}+\delta}\leqslant u\leqslant 1$.
\end{corollary}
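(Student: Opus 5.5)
The plan is the standard passage from an isotropic local law to delocalization, through the spectral decomposition of the resolvent. For a unit vector ${\bm u}={\bm u}_a^\tau$ and $z=E+\i\eta$, we have
\[
\Im\langle {\bm u},G_\tau(z){\bm u}\rangle=\sum_k \frac{\eta\,\vert\langle {\bm u},\psi_k^{(\tau)}\rangle\vert^2}{(\lambda_k^{(\tau)}-E)^2+\eta^2}\geqslant \frac{\vert\langle {\bm u},\psi_j^{(\tau)}\rangle\vert^2}{\eta}
\]
upon choosing $E=\lambda_j^{(\tau)}$. Hence $\vert\langle {\bm u},\psi_j\rangle\vert^2\leqslant \eta\,\Im\langle {\bm u},G_\tau(\lambda_j+\i\eta){\bm u}\rangle$. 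We apply Theorem~\ref{theo:isolaw} with ${\bm x}={\bm y}={\bm u}$, which is legitimate since the basis is deterministic. Because the event there contains a $\max_{\vert E\vert\leqslant L}$, the randomness of $\lambda_j$ causes no difficulty. On the complement of the bad event,
\[
\Im\langle {\bm u},G_\tau{\bm u}\rangle\leqslant \vert\Phi_R(z)\vert+\frac{C\alpha_{N,q}^{R+1}}{\eta^{2R+2}}+t\leqslant K+1,
\]
using the boundedness of $\Phi_R=m+\sum_s\alpha^s\Theta_s$ for $0<\eta\leqslant2$ (Lemma~\ref{lem:thetastruct}, together with $\vert m\vert\leqslant C$), the fact that the deterministic error is $\leqslant CN^{-2\delta(R+1)}\leqslant1$, and $t\leqslant1$. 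This gives $\vert\langle{\bm u},\psi_j\rangle\vert^2\leqslant C\eta$ simultaneously for all $j$ with $\vert\lambda_j\vert\leqslant L$.

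Next we choose the parameters: $\eta=u$, admissible since $N^{-\frac12+\delta}\leqslant u\leqslant1$, and $t=1$ (or $t$ of order one). The bound \eqref{eq:isotropicmain} then gives, for a single ${\bm u}$, failure probability at most
\[
C_L(1+\eta^{-2})\,e^{-cN^{q/2}\eta^2}\leqslant C_L u^{-2}e^{-cN^{q/2}u^2}.
\]
It remains to take the union bound over $a\in\unn{1}{D^\tau}$, which costs a factor $D^\tau=2^{N/2-1}$. We must therefore check that
\[
N^{q/2}u^2\geqslant N^{q/2}N^{-1+2\delta}=N^{q/2-1+2\delta}\gg N,
\]
which holds precisely because $q\geqslant4$. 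Then $2^{N/2}C_Lu^{-2}\leqslant e^{CN}$ is absorbed into $e^{-\frac c2N^{q/2}u^2}$ by reducing $c$; the factor $u^{-2}\leqslant N$ is likewise absorbed. This yields \eqref{eq:gooddeloc} after renaming constants.

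The main point to verify carefully is this union-bound step. The $\ell^\infty$ statement over an exponentially large basis costs $e^{O(N)}$, and it is affordable only because the concentration exponent $N^{q/2}\eta^2$ beats $N$. This is where both $q\geqslant4$ and the lower bound on $u$ are used; for $q=2$ the argument would fail, which explains the hypothesis. A minor further check is that the theorem's constants are uniform in ${\bm u}$, which is given.
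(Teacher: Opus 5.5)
Your proposal is correct and follows the paper's proof essentially step for step. The ingredients coincide: the spectral bound $|\langle {\bm u}_a^\tau,\psi_j^{(\tau)}\rangle|^2\leqslant \eta\,\Im\langle {\bm u}_a^\tau,G_\tau(\lambda_j^{(\tau)}+\i\eta){\bm u}_a^\tau\rangle$; Theorem~\ref{theo:isolaw} with ${\bm x}={\bm y}={\bm u}_a^\tau$, $t=1$, $\eta=u$, where the $\max_{|E|\leqslant L}$ handles the random $\lambda_j$; boundedness of $\Phi_R$ and of the deterministic error; and a union bound over the $D^\tau$ basis vectors, absorbed because $N^{q/2}u^2\geqslant N^{1+2\delta}$ when $q\geqslant4$.
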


Note that for every fixed even $q\geqslant 4$, the delocalization scale in Corollary~\ref{theo:delocinfinity} is $N^{-\frac{1}{2}+\delta}$. The isotropic local law and delocalization estimates in the two parity sectors automatically imply the corresponding statements in the full space.

\begin{remark}
In the occupation basis, the Gaussian rotation invariance of the SYK Hamiltonian yields a substantially stronger delocalization estimate. Let $(e_x)_{x\in\Bcal_\tau}$ denote the occupation basis of $\Hcal_N^\tau$. For every $\varepsilon\in(0,1)$, there exists $c_\varepsilon>0$ such that, for $N$ sufficiently large,
\[
    \Pds\left(
    \max_{1\leqslant j\leqslant D^\tau}
    \max_{x\in\Bcal_\tau}
    |\langle e_x,\psi_j^\tau\rangle|^2
    >(D^\tau)^{-1+\varepsilon}
    \right)
    \leqslant\e^{-c_\varepsilon N},
\]
uniformly in even $2\leqslant q\leqslant N-2$. This estimate holds for all eigenvalues and simultaneously for every choice of orthonormal eigenbasis, including within degenerate eigenspaces. Its proof uses symmetry invariance and moment
estimates for the orbit of occupation vectors, independently of the local
law; we omit the details as they are quite different from our approach here.
The restriction to the occupation basis is essential to this argument,
whereas Corollary~\ref{theo:delocinfinity} applies to any deterministic
orthonormal basis.
\end{remark}

\subsection*{Acknowledgments} The research L. B. is supported in part by a Natural Sciences and Engineering Research Council of Canada (NSERC) RGPIN 2023-03882 \& DGECR 2023-00076 and a FRQNT Etablissement de la Relève Professorale 364387. The research of G. C. is supported by the Italian Ministry of University and Research (MUR) - Fondo Italiano per la Scienza (FIS3) - 2024 Call (FIS-2024-07468), project UBLOCO, CUP F53C25000940001, and also partially supported by the MUR Excellence Department Project MatMod@TOV awarded to the Department of Mathematics, University of Rome Tor Vergata, CUP E83C23000330006. Additionally, G.C. thanks UMI (Unione Matematica Italiana), INdAM (Istituto Nazionale di Alta Matematica “Francesco Severi”), and the group GNFM. Large language models were used to assist with some mathematical arguments, literature reviews, and manuscript checking.

\section{Proof of the local law}
\label{sec:llaw}

We start by computing the approximate differential equation followed by $m_N(z)$ through the following lemma.
\begin{lemma}\label{lem:ibp}
    Let $z\in\Cbb_+$ and $\tau\in\{\text{Full},+,-\}$ then we have
    \[
    m_{N,\tau}'(z)+zm_{N,\tau}(z)+1=-\Eds\left[
       \tr_\tau\left(
            G_\tau(z)\left(
                \Scal[G_\tau(z)]-G_\tau(z)
            \right)
        \right)
    \right]
    \eqqcolon-\Rcal_\tau(z)
    \]
    where we defined the covariance operator
    \[
    \Scal[R] = \frac{1}{M}\sum_{\substack{A\subset\unn{1}{N}\\\vert A\vert=q}}\Gamma_A R \Gamma_A.
    \]
\end{lemma}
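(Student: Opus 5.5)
The plan is to use the resolvent identity together with Gaussian integration by parts. The identity $G(H-z)=\Id$ gives $zG = HG - \Id$. Taking the normalized trace $\tr_\tau$ and expectation yields
\[
z\,m_{N,\tau}(z)+1=\Eds\bigl[\tr_\tau(H_q G_\tau)\bigr]=\frac{1}{\sqrt M}\sum_{A}\Eds\bigl[J_A\,\tr_\tau(\Gamma_A G_\tau)\bigr].
\]
In the sectors $\tau=\pm$ we work throughout with the restriction of $\Gamma_A$ to $\Hcal_N^\tau$. This is legitimate because $q$ is even, so $\Gamma_A$ commutes with $P_N$ and hence with $\Pi_N^\pm$. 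Equivalently, $\tr_\pm(X)=\frac2D\Tr(\Pi^\pm_N X)$ and every operator involved commutes with $\Pi^\pm_N$.

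Next we apply Gaussian integration by parts, $\Eds[J_A F(J)]=\Eds[\partial_{J_A}F]$. Since $\partial_{J_A}H_q = M^{-1/2}\Gamma_A$, we have $\partial_{J_A}G=-M^{-1/2}G\Gamma_A G$. This gives
\[
z\,m_{N,\tau}+1=-\frac1M\sum_A\Eds\bigl[\tr_\tau(\Gamma_A G\Gamma_A G)\bigr]=-\Eds\bigl[\tr_\tau(G\,\Scal[G])\bigr],
\]
using cyclicity of the trace, which is valid for $\tr_\pm$ because $\Pi^\pm_N$ commutes with everything present. The required integrability holds because $\|G\|\leqslant \eta^{-1}$ and $\Gamma_A$ is unitary: $\Gamma_A^2=\i^{2\binom q2}(\gamma_{i_1}\cdots\gamma_{i_q})^2=(-1)^{\binom q2}(-1)^{\binom q2}\Id=\Id$, and $\Gamma_A$ is self-adjoint.

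It remains to identify the derivative term. Since $\partial_z G=G^2$, we have $m_{N,\tau}'(z)=\Eds[\tr_\tau(G^2)]$. Differentiation under the expectation is justified by the bound $\|G^2\|\leqslant\eta^{-2}$ locally uniformly in $z$. Adding this to the previous display gives
\[
m_{N,\tau}'+zm_{N,\tau}+1=\Eds\bigl[\tr_\tau(G^2)\bigr]-\Eds\bigl[\tr_\tau(G\,\Scal[G])\bigr]=-\Eds\bigl[\tr_\tau\bigl(G(\Scal[G]-G)\bigr)\bigr],
\]
which is exactly the claimed identity with $\Rcal_\tau$.

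There is no serious obstacle here. The only points needing care are the following:
\begin{itemize}
\item $\Gamma_A$ is self-adjoint and squares to the identity, which requires tracking the sign $\i^{\binom q2}$.
\item $\Gamma_A$ preserves the parity sectors, which makes the sectorwise trace computation and cyclicity legitimate.
\end{itemize}
Both follow from the anticommutation relations and the identity $P_N\Gamma_A=(-1)^{|A|}\Gamma_A P_N$ stated above.
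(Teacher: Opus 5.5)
Your proposal is correct and follows the paper's proof: the identity $H_qG=\Id+zG$, Gaussian integration by parts with $\partial_{J_A}G=-M^{-1/2}G\Gamma_AG$, cyclicity of the trace to recognize $\tr(G\,\Scal[G])$, and $m_{N,\tau}'=\Eds[\tr_\tau(G^2)]$. Your remarks that $\Gamma_A$ is a self-adjoint involution commuting with $\Pi_N^\pm$ only make explicit what the paper dismisses as ``the same'' for the sectors $\tau=\pm$.
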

\begin{proof}
    We omit the $\tau$-dependence and work only in the full sector. The proof in the other parity sectors is the same. We start with the relation $H_qG(z) = \Id_D+zG(z)$ by definition of the resolvent which gives
    \[
    1+zm_N(z)=\Eds\left[\tr(H_qG(z))\right].
    \]
    We then use Gaussian integration by parts to write
    \begin{align*}
    \Eds\left[\tr(H_qG(z))\right]
    =
    \frac{1}{\sqrt{M}}\sum_{\substack{A\subset\unn{1}{N}\\\vert A\vert =q}}\Eds\left[J_A\tr(\Gamma_AG(z))\right]
    &=
    -\frac{1}{M}\sum_{\substack{A\subset\unn{1}{N}\\\vert A\vert =q}}
    \Eds\left[\tr(\Gamma_AG(z)\Gamma_AG(z))\right]\\&=-\Eds\left[\tr(G(z)\Scal[G(z)])\right]
    \end{align*}
    where we used the fact that $\Eds[J_AG(z)]=-\frac{1}{\sqrt{M}}\Eds\left[G(z)\Gamma_AG(z)\right].$ We finish the proof by noticing that $m'_N(z)=\Eds\left[\tr(G(z)^2)\right].$
\end{proof}

Throughout this section we abbreviate $\alpha\coloneqq\alpha_{N,q}$ inside proofs, and we omit the $\tau$-dependence whenever the argument is identical in the three sectors. We shall constantly use the following notation. For a $q$-subset $A$ we set
\[
H^{(A)}_q = \Gamma_AH_q\Gamma_A,\quad
G_A(z) = \Gamma_AG(z)\Gamma_A = \left(H^{(A)}_q-z\Id_D\right)^{-1},\quad
\Ical_A = \left\{B\subset\unn{1}{N}:\vert B\vert =q,\,\Gamma_A\Gamma_B=-\Gamma_B\Gamma_A\right\},
\]
so that $\vert \Ical_A\vert = M\alpha_{N,q}$ by the definition \eqref{eq:alphanqasymp} of $\alpha_{N,q}$; since $q$ is even, $\Gamma_A$ preserves the two parity sectors, so that the same definitions make sense in each of them. Since conjugating by $\Gamma_A$ changes the sign of exactly those terms of $H_q$ which are indexed by $\Ical_A$, the difference
\begin{equation}\label{eq:defdeltaa}
\Delta_A \coloneqq H_q-H_q^{(A)}
\quad\text{is given by}\quad
\Delta_A = \frac{2}{\sqrt{M}}\sum_{B\in\Ical_A}J_B\Gamma_B,
\quad\text{and satisfies}\quad
\Eds\left[\tr(\Delta_A^2)\right]=4\alpha_{N,q},
\end{equation}
where the last identity follows from $\Gamma_B^2=\Id_D$ and the fact that only the diagonal terms survive. Note also that, by the definition of $\Scal$ and by cyclicity of the trace, the remainder of Lemma~\ref{lem:ibp} may be rewritten as
\begin{equation}
\label{eq:sourceasdiff}
\Rcal(z)=\frac{1}{M}\sum_{\substack{A\subset\unn{1}{N}\\\vert A\vert=q}}\Eds\left[\tr\left(G(z)\left(G_{A}(z)-G(z)\right)\right)\right].
\end{equation}
Finally, we will repeatedly use the Ward identity in the following form: if $\lambda_1\leqslant\dots\leqslant\lambda_{D^\tau}$ denote the eigenvalues of $H_q^\tau$ and $k\geqslant2$ is an integer, then, since $\Vert G_\tau(z)\Vert\leqslant\eta^{-1}$,
\begin{equation}\label{eq:boundG4}
\tr_\tau\left(\vert G_\tau(E+\i\eta)\vert^{k}\right)
\leqslant \frac{1}{\eta^{k-2}}\,\tr_\tau\left(\vert G_\tau(E+\i\eta)\vert^{2}\right)
=\frac{1}{\eta^{k-2}}\cdot\frac{1}{D^\tau}\sum_{i=1}^{D^\tau} \frac{1}{(\lambda_i-E)^2+\eta^2}
=\frac{\Im s_{N,\tau}(z)}{\eta^{k-1}}.
\end{equation}
\eer

Before estimating the remainder $\Rcal_\tau$, we collect a few elementary properties of the Stieltjes transform $m$ of the standard Gaussian law which will be used repeatedly in the sequel.
\begin{lemma}
\label{lem:gaussprofile}
    Let $\rho_G(x)=(2\pi)^{-\frac{1}{2}}\e^{-\frac{x^2}{2}}$ denote the standard Gaussian density. For every integer $k\geqslant0$ and every $z\in\Cbb_+$ we have
    \begin{equation}
    \label{eq:mkrepr}
    m^{(k)}(z)=k!\int_\Rbb \frac{\rho_G(x)}{(x-z)^{k+1}}\d x=\int_\Rbb\frac{\rho_G^{(k)}(x)}{x-z}\d x,
    \end{equation}
    so that $m^{(k)}$ is the Stieltjes transform of $\rho_G^{(k)}$. Moreover, with $L$ as in \eqref{eq:defL},
    \begin{equation}
    \label{eq:gaussode}
    L[m](z)=-1
    \qquad\text{and}\qquad
    L\left[m^{(k)}\right](z)=m^{(k+1)}(z)+zm^{(k)}(z)=-k\,m^{(k-1)}(z)
    \quad\text{for }k\geqslant1,
    \end{equation}
    and consequently, for every integer $j\geqslant0$, the function $-\frac{1}{j+1}m^{(j+1)}$ is the unique solution of $L[h]=m^{(j)}$ which vanishes as $\Im z\to\infty$.
    Finally, for every integer $k\geqslant0$ and every $z=E+\i\eta\in\Cbb_+$,
    \begin{equation}
    \label{eq:gaussderivbound}
    \left\vert m^{(k)}(z)\right\vert \leqslant \frac{k!}{\eta^{k+1}},
    \end{equation}
    and $m$ extends holomorphically to the half-plane $\{z\in\Cbb:\Im z>-3\}$, so that for every integer $k\geqslant0$ there exists $C_k>0$ with
    \begin{equation}
    \label{eq:gaussderivunif}
    \left\vert m^{(k)}(E+\i\eta)\right\vert \leqslant C_k
    \qquad\text{for every}\quad E\in\Rbb\quad\text{and}\quad 0<\eta\leqslant2.
    \end{equation}
\end{lemma}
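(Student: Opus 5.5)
The plan is to establish the four claims of Lemma~\ref{lem:gaussprofile} in turn, using only the integral representation of $m$ and integration by parts against $\rho_G$.

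\emph{Representation \eqref{eq:mkrepr}.} Since $\rho_G$ is integrable and $|x-z|^{-k-1}\leqslant\eta^{-k-1}$, differentiation under the integral is justified. It gives
\[
m^{(k)}(z)=k!\int_\Rbb\rho_G(x)(x-z)^{-k-1}\,\d x .
\]
For the second form, note $\partial_x(x-z)^{-j}=-j(x-z)^{-j-1}$. Integrating by parts $k$ times moves all derivatives onto $\rho_G$; the boundary terms vanish because $\rho_G$ and its derivatives decay like Gaussians. The factorial cancels exactly, leaving the Stieltjes transform of $\rho_G^{(k)}$.

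\emph{The identities \eqref{eq:gaussode}.} The case $k=0$ is the ODE $m'+zm+1=0$ already recalled. Differentiating it $k$ times with Leibniz' rule gives
\[
m^{(k+1)}+zm^{(k)}+k\,m^{(k-1)}=0 .
\]
This is exactly $L[m^{(k)}]=-k\,m^{(k-1)}$.

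\emph{Existence and uniqueness for $L[h]=m^{(j)}$.} Applying the previous identity with $k=j+1$ shows that $h=-\frac{1}{j+1}m^{(j+1)}$ solves $L[h]=m^{(j)}$. It vanishes as $\Im z\to\infty$ by \eqref{eq:gaussderivbound}, which we prove next and which does not depend on this step. For uniqueness, the difference of two solutions satisfies $g'+zg=0$, so $g=c\,\e^{-z^2/2}$. On the vertical line $z=\i\eta$ we have $|\e^{-z^2/2}|=\e^{\eta^2/2}\to\infty$, so decay forces $c=0$.

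\emph{Bound \eqref{eq:gaussderivbound}.} This follows from the first representation, since $|x-z|\geqslant\eta$ and $\rho_G$ has mass one.

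\emph{Holomorphic extension and \eqref{eq:gaussderivunif}.} This is the main point. I would use the explicit formula
\[
m(z)=\i\sqrt{2\pi}\,\rho_G(z)-\i\int_0^\infty \e^{\i zs-s^2/2}\,\d s ,\qquad z\in\Cbb_+ ,
\]
which gives an entire continuation. To derive it, use $\frac{1}{x-z}=\i\int_0^\infty \e^{-\i s(x-z)}\,\d s$ for $\Im z>0$ and Fubini, obtaining $m(z)=\i\int_0^\infty \e^{\i sz-s^2/2}\,\d s$. This expression is already entire in $z$ because of the Gaussian factor in $s$; I will use this form and drop the one above.

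Then
\[
m^{(k)}(z)=\i\int_0^\infty(\i s)^k\e^{\i sz-s^2/2}\,\d s ,
\]
and for $\Im z=\eta\in(-3,2]$ we have $|\e^{\i sz}|=\e^{-s\eta}\leqslant\e^{3s}$. Hence
\[
|m^{(k)}(z)|\leqslant\int_0^\infty s^k\e^{3s-s^2/2}\,\d s=:C_k<\infty ,
\]
uniformly in $E$. This yields \eqref{eq:gaussderivunif} and the extension to $\{\Im z>-3\}$.

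The only delicate point is keeping the Fourier representation consistent with the sign conventions. As a check, $m(z)\sim-1/z$ at infinity: integrating by parts in $s$ gives $\i\cdot\frac{-1}{\i z}=-1/z$ as leading term, as required.
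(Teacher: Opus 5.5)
Your proof is correct. For \eqref{eq:mkrepr}, the recursion \eqref{eq:gaussode} (Leibniz applied to $L[m]=-1$), the uniqueness argument via $c\,\e^{-z^2/2}$ and the bound \eqref{eq:gaussderivbound}, it is the same as the paper's.

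The step where you genuinely differ is the holomorphic extension and \eqref{eq:gaussderivunif}. The paper stays in the spatial variable. It uses that $\rho_G$ is entire with $\vert\rho_G(t-3\i)\vert=\e^{9/2}\rho_G(t)$ to shift the contour in \eqref{eq:mkrepr} down to $\Im\zeta=-3$. This gives $m^{(k)}(z)=k!\int_\Rbb\rho_G(t-3\i)(t-3\i-z)^{-k-1}\,\d t$, holomorphic on $\{\Im z>-3\}$. The explicit constant $k!\,3^{-k-1}\e^{9/2}$ for $0<\Im z\leqslant2$ then follows from $\vert t-3\i-z\vert\geqslant3$.

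You pass to the Fourier side instead and write $m(z)=\i\int_0^\infty\e^{\i sz-s^2/2}\,\d s$. Your derivation via $\frac{1}{x-z}=\i\int_0^\infty\e^{-\i s(x-z)}\,\d s$ and Fubini is correct. This expression is manifestly entire and gives $\vert m^{(k)}(z)\vert\leqslant\int_0^\infty s^k\e^{as-s^2/2}\,\d s$ on every half-plane $\{\Im z\geqslant-a\}$, uniformly in $E$.

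The hypotheses behind the two routes are Fourier-dual. The contour shift needs $\rho_G$ holomorphic with integrable decay in the strip $\{-3\leqslant\Im\zeta\leqslant0\}$. Your argument needs $\widehat{\rho_G}(s)\e^{3s}$ integrable, and since $\widehat{\rho_G}$ decays faster than any exponential you get the entire continuation in one stroke. Your representation is also exactly the Laplace transform of $\widehat{\rho_G^{(k)}}(s)=(\i s)^k\e^{-s^2/2}$, the object that reappears in \eqref{eq:hatvarrho} and in Lemma~\ref{lem:sff1}. The paper's version keeps everything in the Stieltjes-integral form used throughout Section~\ref{sec:llaw}.

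Two small points should be fixed.

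First, $L[m]=-1$ is itself one of the assertions of the lemma, so it cannot simply be quoted from the introduction, where it is stated without proof. It takes one line. You can argue as in the paper: take $k=1$ in \eqref{eq:mkrepr}, use $\rho_G'=-x\rho_G$ and $\frac{x}{x-z}=1+\frac{z}{x-z}$. Alternatively, integrate $m'(z)=-\int_0^\infty s\e^{-s^2/2}\e^{\i sz}\,\d s$ by parts in $s$, which gives $-1-zm(z)$.

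Second, your first displayed formula $\i\sqrt{2\pi}\rho_G(z)-\i\int_0^\infty\e^{\i zs-s^2/2}\,\d s$ is not $m$. Along $z=\i\eta$ it grows like $\e^{\eta^2/2}$, whereas $m(\i\eta)\to0$. You rightly discard it, but it should be deleted rather than left in the write-up.
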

The proof of Lemma~\ref{lem:gaussprofile} is elementary and we 
postpone 
it 
to Appendix~\ref{app:techres}. 
We can already deduce from it the structure of the deterministic profiles of Definition~\ref{def:thetas}.

\begin{proof}[Proof of Lemma~\ref{lem:thetastruct}]
    We argue by induction on $s$, the case $s=0$ being trivial since $\Theta_0=m^{(0)}$. Assume that, for every $1\leqslant r\leqslant s$, the function $\Theta_{s-r}$ is a linear combination of the $m^{(k)}$ with $k$ even and $2(s-r)+2\leqslant k\leqslant 4(s-r)$, with the convention that only $k=0$ occurs when $s=r$. Then $\Theta_{s-r}^{(2r+1)}$ is a combination of the $m^{(k+2r+1)}$, and the last statement in \eqref{eq:gaussode} shows that the unique solution of \eqref{eq:thetarec} vanishing at infinity is
    \begin{equation}
    \label{eq:thetaexpl}
    \Theta_s=-\sum_{r=1}^{s}\frac{c_r}{(2r+1)!}\sum_{k=2(s-r)+2}^{4(s-r)}a^{(s-r)}_k\,\frac{m^{(k+2r+2)}}{k+2r+2},
    \qquad\text{where}\quad \Theta_{s-r}=\sum_{k=2(s-r)+2}^{4(s-r)}a^{(s-r)}_k\,m^{(k)}.
    \end{equation}
    Every index $k+2r+2$ occurring here is even, is equal to $2s+2$ when $r=s$ and at least $2(s-r)+2+2r+2=2s+4$ when $r<s$, and is at most $4(s-r)+2r+2=4s-2r+2\leqslant4s$ for $r\geqslant1$. This proves the stated structure. 

    The bounds \eqref{eq:thetabounds} follow from \eqref{eq:gaussderivunif} and \eqref{eq:gaussderivbound} respectively, using that $\Theta_s$ only involves derivatives of $m$ of order at least $2s+2$. Finally, writing $\Theta_s=\mathcal{D}_sm$ and using the second identity in \eqref{eq:mkrepr}, we get that $\Theta_s$ is the Stieltjes transform of $\mathcal{D}_s\rho_G$, so that $\Phi_R$ is the Stieltjes transform of the function $\varrho_R$ of \eqref{eq:defvarrhoR}. The latter is a real Schwartz function since $\rho_G$ is, its total mass is $\int_\Rbb\rho_G=1$ because $\mathcal{D}_s$ only involves derivatives of order at least $2s+2\geqslant1$, whose integrals over $\Rbb$ vanish.
\end{proof}

We now turn to the expansion of the remainder $\Rcal_\tau$ of Lemma~\ref{lem:ibp}. The mechanism behind it is that each occurrence of $\Delta_A$ inside a trace of resolvents can be integrated by parts, which produces an additional factor $\alpha_{N,q}$. This is the content of the following lemma, which we shall apply to traces containing an arbitrary number of resolvents.
\begin{lemma}
\label{lem:multiins}
    Let $z=E+\i\eta\in\Cbb_+$ be such that $\alpha_{N,q}\leqslant\eta^2$, let $k\geqslant2$ and $j\geqslant1$ be integers, let $A_1,\dots,A_j$ be $q$-subsets, not necessarily distinct, and let $\tau\in\{\text{Full},+,-\}$. Let $\Xi$ be a product consisting of $k$ factors of the form $UG_\tau(z)U^\ast$, where the deterministic unitary matrices $U$ may vary from factor to factor, of the $j$ factors $\Delta_{A_1},\dots,\Delta_{A_j}$, and of arbitrary deterministic unitary matrices in between. Then
    \[
    \left\vert \Eds\left[\tr_\tau(\Xi)\right]\right\vert
    \leqslant
    \frac{C_{k,j}\,\alpha_{N,q}^{\left\lceil \frac j2\right\rceil}}{\eta^{\,k-1+(j\bmod 2)}}\,\Im m_{N,\tau}(z).
    \]
\end{lemma}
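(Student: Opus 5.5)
We argue by induction on $j$, integrating by parts one factor $\Delta_{A_1}$ at a time. The base case is the deterministic bound for $j=0$. If $\Xi$ is a product of $k\geqslant2$ factors $UG_\tau U^\ast$ and deterministic unitaries, the generalized Hölder inequality for Schatten norms gives
\[
\vert\tr_\tau(\Xi)\vert\leqslant\prod_{i=1}^k\left(\tr_\tau\vert G_\tau\vert^k\right)^{1/k}=\tr_\tau\vert G_\tau\vert^k .
\]
The Ward identity \eqref{eq:boundG4} then yields the bound $\Im s_{N,\tau}(z)/\eta^{k-1}$, and taking expectations gives $\Im m_{N,\tau}(z)/\eta^{k-1}$. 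We will also need $j=0$ to hold for every $k\geqslant2$ in the class; it does.

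For $j\geqslant1$, write $\Delta_{A_1}=\frac{2}{\sqrt M}\sum_{B\in\Ical_{A_1}}J_B\Gamma_B$ and apply Gaussian integration by parts in each $J_B$. Since $q$ is even, $\Gamma_B$ preserves the parity sectors, so the computation is the same for every $\tau$. The derivative $\partial_{J_B}$ acts on the remaining factors of $\Xi$ in two ways.

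\begin{enumerate}
\item On a resolvent factor, it gives $\partial_{J_B}(UGU^\ast)=-\frac{1}{\sqrt M}UG\Gamma_BGU^\ast$. This replaces the factor by a product of two resolvent factors with the unitary $\Gamma_B$ in between. It produces the term
\[
-\frac{2}{M}\sum_{B\in\Ical_{A_1}}\Eds\left[\tr_\tau(\Xi_B)\right],
\]
where $\Xi_B$ belongs to the class with $k+1$ resolvent factors and $j-1$ factors $\Delta$. The factors $\Gamma_B$ count as deterministic unitaries.
\item On another factor $\Delta_{A_i}$ with $i\geqslant2$, it gives $\frac{2}{\sqrt M}\Gamma_B\mathbf 1_{B\in\Ical_{A_i}}$. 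This produces
\[
\frac{4}{M}\sum_{B\in\Ical_{A_1}\cap\Ical_{A_i}}\Eds\left[\tr_\tau(\Xi'_B)\right],
\]
where $\Xi'_B$ has $k$ resolvent factors and $j-2$ factors $\Delta$.
\end{enumerate}

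Since $\vert\Ical_{A_1}\vert=M\alpha_{N,q}$, each prefactor $\frac1M\sum_B$ amounts to $\alpha_{N,q}$ times an average. Applying the induction hypothesis uniformly in $B$ then gives, up to constants, the following.

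\begin{itemize}
\item Resolvent terms: $\alpha\cdot\alpha^{\lceil (j-1)/2\rceil}\eta^{-(k-(j-1\bmod 2))}\Im m$.
\item Pairing terms: $(j-1)\cdot\alpha\cdot\alpha^{\lceil (j-2)/2\rceil}\eta^{-(k-1+(j\bmod2))}\Im m$.
\end{itemize}

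The pairing terms match the target exactly, since $\lceil (j-2)/2\rceil+1=\lceil j/2\rceil$. For the resolvent terms we check the two parities of $j$ separately. If $j$ is odd, $j-1$ is even, and we get $\alpha^{(j+1)/2}\eta^{-k}$, which is exactly the target. If $j$ is even, $j-1$ is odd, and we get $\alpha^{j/2+1}\eta^{-k-1}=\alpha^{j/2}\eta^{-(k-1)}\cdot\alpha\eta^{-2}$. This is bounded by the target thanks to the assumption $\alpha_{N,q}\leqslant\eta^2$, which is exactly where that hypothesis enters. The constants satisfy a recursion of the form $C_{k,j}\leqslant 2kC_{k+1,j-1}+4(j-1)C_{k,j-2}$, which is finite for each $(k,j)$.

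The main point requiring care is to check that the class of products is closed under these operations. The derivative of $UGU^\ast$ must again be a product of resolvent factors of the same form separated by unitaries: for instance $G_A=\Gamma_AG\Gamma_A$ is of this form, and the $\Gamma_B$ are unitary. The number of resolvent factors must stay at least $2$, which holds because it only grows. We also need to justify applying the induction hypothesis inside the average over $B$. This works because the induction bound is uniform over all choices of unitaries and $q$-subsets, and the final bound is $\Im m_{N,\tau}$, which does not depend on $B$.

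The $j=1$ case and the case where all $A_i$ coincide are handled by the same computation. A less trivial check is that no factor $\Delta$ ever needs to be bounded in operator norm, where it would cost $\sqrt{\log}$ factors. This holds because every $\Delta$ is eliminated by integration by parts before the Hölder/Ward step.
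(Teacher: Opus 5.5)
Your argument is correct and is essentially the paper's proof. Both remove every $\Delta_{A_i}$ by Gaussian integration by parts, with each free summation index costing $\alpha_{N,q}$ and each unpaired derivative creating one more resolvent factor; both close with H\"older and the Ward identity \eqref{eq:boundG4}, and use $\alpha_{N,q}\leqslant\eta^2$ to compare pairing patterns. The paper expands all $\Delta$'s at once via the partial-pairing Wick formula and maximizes over the number $a$ of pairs, whereas you integrate by parts one factor at a time by induction on $j$ (with one harmless typo: the hypothesis at $(k+1,j-1)$ gives $\eta^{-(k+((j-1)\bmod 2))}$, which is what your parity case analysis actually uses).
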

Let us comment on the form of $\Xi$ before giving the proof. The unitary conjugations, as well as the unitary matrices allowed between consecutive factors, are needed because the traces to which we shall apply the lemma arise from the resolvent expansion of $G_A=\Gamma_AG\Gamma_A$ and from repeated differentiations of it, and therefore contain resolvents conjugated by Clifford monomials, as well as such monomials sitting between consecutive resolvents. Since the Schatten norms are unitarily invariant, allowing them costs nothing in the estimate.
\begin{proof}
    We omit the $\tau$-dependence. Expanding every factor $\Delta_{A_i}$ by \eqref{eq:defdeltaa} we obtain
    \[
    \Eds\left[\tr(\Xi)\right]
    =
    \frac{2^j}{M^{\frac j2}}\sum_{B_1\in\Ical_{A_1}}\cdots\sum_{B_j\in\Ical_{A_j}}
    \Eds\left[J_{B_1}\cdots J_{B_j}\tr\left(\Xi_{B_1,\dots,B_j}\right)\right],
    \]
    where $\Xi_{B_1,\dots,B_j}$ is obtained from $\Xi$ by replacing the factor $\Delta_{A_i}$ with $\Gamma_{B_i}$ for every $i$. Iterated Gaussian integration by parts gives
    \[
    \Eds\left[J_{B_1}\cdots J_{B_j}F\right]
    =
    \sum_{P}\left(\prod_{\{i,i'\}\in P}\delta_{B_iB_{i'}}\right)
    \Eds\left[\Big(\prod_{i\notin V(P)}\partial_{J_{B_i}}\Big)F\right],
    \]
    the sum running over all partial pairings $P$ of $\unn1j$, with $V(P)$ the set of indices covered by $P$; the number of such $P$ is bounded by a constant depending only on $j$. Fix such a $P$, with $a$ pairs, so that $b\coloneqq j-2a$ derivatives act on the trace. For any deterministic unitary matrix $U$ we have
    \[
    \partial_{J_B}\left(UG(z)U^\ast\right)
    =
    -\frac{1}{\sqrt{M}}\left(UG(z)U^\ast\right)\left(U\Gamma_BU^\ast\right)\left(UG(z)U^\ast\right),
    \]
    so that each differentiation produces a factor $M^{-\frac12}$ and replaces one resolvent factor by two factors of the same form, separated by a deterministic unitary matrix. Hence, after the $b$ differentiations, we are left with at most $C_{k,b}$ normalized traces of products of $k+b$ factors of the form $UG(z)U^\ast$ separated by deterministic unitary matrices. By the noncommutative H\"{o}lder inequality with all exponents equal to $k+b$ (the Schatten norms $\Vert X\Vert_p\coloneqq\tr(\vert X\vert^p)^{1/p}$ being normalized, consistently with $\tr$), and by the unitary invariance of the Schatten norms, each of these traces is bounded in modulus by
    \[
    \Vert G(z)\Vert_{k+b}^{k+b}=\tr\left(\vert G(z)\vert^{k+b}\right)\leqslant \frac{\Im s_N(z)}{\eta^{k+b-1}},
    \]
    where in the last step we used the Ward identity \eqref{eq:boundG4}. Moreover the Kronecker deltas leave $j-a$ free summation indices, each running over a set of cardinality at most $M\alpha$. Altogether, the contribution of $P$ is bounded by
    \[
    C_{k,j}\,\frac{1}{M^{\frac j2}}\,\left(M\alpha\right)^{j-a}\,\frac{1}{M^{\frac b2}}\,\frac{\Im m_N(z)}{\eta^{k+b-1}}
    =
    C_{k,j}\,\alpha^{j-a}\,\frac{\Im m_N(z)}{\eta^{k+j-2a-1}},
    \]
    since $-\frac j2+(j-a)-\frac{j-2a}{2}=0$. Writing $\alpha^{j-a}\eta^{-(k+j-2a-1)}=\left(\alpha\eta^{-2}\right)^{j-a}\eta^{\,j-k+1}$ and using that $\alpha\leqslant\eta^2$, this quantity is a decreasing function of $j-a$, and is therefore maximal when the number $a$ of pairs is maximal. Since a partial pairing of $\unn1j$ has at most $\lfloor j/2\rfloor$ pairs, the extremal case is $a=\lfloor j/2\rfloor$, for which $j-a=\lceil j/2\rceil$ and
    \[
    k+j-2a-1=k-1+\left(j-2\left\lfloor \tfrac j2\right\rfloor\right)=k-1+(j\bmod2),
    \]
    which gives the claim.
\end{proof}

\begin{proposition}
\label{pro:trunc}
    Let $R\geqslant1$, let $\tau\in\{\text{Full},+,-\}$, and let $z=E+\i\eta\in\Cbb_+$ be such that $\alpha_{N,q}\leqslant\eta^2$. Omitting the $\tau$-dependence and setting
    \[
    \Rcal^{(j)}(z)\coloneqq \frac1M\sum_{\substack{A\subset\unn{1}{N}\\\vert A\vert = q}}\Eds\left[\tr\left(G^2\left(\Delta_AG\right)^j\right)\right],
    \]
    we have
    \begin{equation}
    \label{eq:trunc}
    \Rcal(z)=\sum_{j=1}^{2R}\Rcal^{(j)}(z)+\O{\frac{\alpha_{N,q}^{R+1}}{\eta^{2R+3}}\Im m_{N}(z)},
    \qquad\text{and}\qquad
    \left\vert \Rcal^{(j)}(z)\right\vert \leqslant \frac{C_j\,\alpha_{N,q}^{\left\lceil \frac j2\right\rceil}}{\eta^{\,2\left\lceil \frac j2\right\rceil+1}}\Im m_{N}(z)
    \end{equation}
    for every $1\leqslant j\leqslant 2R$. 
\end{proposition}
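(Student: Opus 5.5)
The natural route is a finite resolvent expansion of $G_A-G$ around $G$, which we then plug into the representation \eqref{eq:sourceasdiff} of $\Rcal$. Since $H_q^{(A)}=H_q-\Delta_A$, the second resolvent identity gives $G_A=G+G\Delta_AG_A$. Iterating $J$ times we obtain
\[
G_A-G=\sum_{j=1}^{J}\left(G\Delta_A\right)^jG+\left(G\Delta_A\right)^{J+1}G_A .
\]
Multiplying on the left by $G$ and taking the normalized trace, cyclicity yields $\tr(G(G\Delta_A)^jG)=\tr(G^2(\Delta_AG)^j)$. Averaging over $A$ then produces exactly the terms $\Rcal^{(j)}$, and we choose the truncation level $J=2R$. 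The remainder is
\[
\frac1M\sum_A\Eds\left[\tr\left(G^2(\Delta_AG)^{2R}\Delta_AG_A\right)\right].
\]

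For the bound on $\Rcal^{(j)}$ we apply Lemma~\ref{lem:multiins} to $\Xi=G^2(\Delta_AG)^j$. It contains $k=j+2$ resolvent factors and $j$ factors $\Delta_A$, so we get $C_j\alpha^{\lceil j/2\rceil}\eta^{-(j+1+(j\bmod 2))}\Im m_N$. For $j$ odd the exponent is $j+2=2\lceil j/2\rceil+1$. For $j$ even it is $j+1=2\lceil j/2\rceil+1$. In both cases this is the claimed bound.

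For the remainder we observe that $G_A=\Gamma_AG\Gamma_A$ is of the form $UGU^\ast$ with $U=\Gamma_A$ unitary, since $\Gamma_A^2=\Id$ and $\Gamma_A$ is Hermitian. This is precisely why Lemma~\ref{lem:multiins} allows conjugated resolvents. The remainder term therefore has $k=2R+3$ resolvent-type factors and $j=2R+1$ factors $\Delta_A$. The lemma gives
\[
C_R\,\alpha^{R+1}\eta^{-(2R+2+1)}\Im m_N=C_R\,\alpha^{R+1}\eta^{-(2R+3)}\Im m_N ,
\]
uniformly in $A$, and averaging over $A$ preserves the bound. This gives \eqref{eq:trunc}.

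The only point needing care is to check that the last factor $G_A$ does not break the Gaussian integration by parts inside Lemma~\ref{lem:multiins}. Derivatives $\partial_{J_B}$ acting on $\Gamma_AG\Gamma_A$ produce $\Gamma_AG\Gamma_BG\Gamma_A=(\Gamma_AG\Gamma_A)(\Gamma_A\Gamma_B\Gamma_A)(\Gamma_AG\Gamma_A)$, with $\Gamma_A\Gamma_B\Gamma_A=\pm\Gamma_B$ unitary. This is exactly the mechanism the lemma's proof already handles with $U=\Gamma_A$. I expect this bookkeeping, rather than any genuine estimate, to be the main obstacle, and it is covered by the generality in which Lemma~\ref{lem:multiins} was stated.

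The hypothesis $\alpha\leqslant\eta^2$ enters only through that lemma.
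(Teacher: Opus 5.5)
Your proposal is correct and follows the paper's proof. You expand $G_A-G$ to order $2R$ inside \eqref{eq:sourceasdiff}, then apply Lemma~\ref{lem:multiins} to $\Rcal^{(j)}$ (with $k=j+2$, $j$ insertions) and to the remainder (with $k=2R+3$, $j=2R+1$, using that $G_A=\Gamma_AG\Gamma_A$ is a unitarily conjugated resolvent). The only difference is cosmetic: you iterate $G_A=G+G\Delta_AG_A$ instead of the paper's $G_A=G+G_A\Delta_AG$, which moves $G_A$ to the other end of the remainder trace without changing the counts.
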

\begin{proof}
    Iterating the resolvent identity $G_A=G+G_A\Delta_AG$ we obtain
    \[
    G_A-G=\sum_{j=1}^{2R}G\left(\Delta_AG\right)^j+G_A\left(\Delta_AG\right)^{2R+1},
    \]
    so that, by \eqref{eq:sourceasdiff} and cyclicity of the trace,
    \[
    \Rcal(z)=\sum_{j=1}^{2R}\Rcal^{(j)}(z)+\frac1M\sum_{\substack{A\subset\unn{1}{N}\\\vert A\vert = q}}\Eds\left[\tr\left(GG_A\left(\Delta_AG\right)^{2R+1}\right)\right].
    \]
    The trace defining $\Rcal^{(j)}$ contains $k=j+2$ resolvent factors and $j$ insertions, while the last trace contains $k=2R+3$ resolvent factors and $j=2R+1$ insertions. Lemma~\ref{lem:multiins} gives the two estimates in \eqref{eq:trunc}, using that $k-1+(j\bmod2)=2\lceil j/2\rceil+1$ when $k=j+2$. \cp 
\end{proof}

We now prove stability of the differential equation at leading order. This is the case $R=0$ of Proposition~\ref{pro:higherstab}, and it is where the induction proving the latter starts.
\begin{lemma}\label{lem:expect}
    Let $z=E+\i\eta$ with $E\in\Rbb$ and $\eta\in(0,1]$. There exist absolute constants $c,C>0$ such that if $\alpha_{N,q}\leqslant c\,\eta^{2}$ then for $\tau\in\{\text{Full},+,-\}$
    \[
    \vert m_{N,\tau}(z)-m(z)\vert \leqslant \frac{C\alpha_{N,q}}{\eta^2}.
    \]
\end{lemma}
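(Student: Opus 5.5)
Set $d(z)\coloneqq m_{N,\tau}(z)-m(z)$. By Lemma~\ref{lem:ibp} and $L[m]=-1$ from Lemma~\ref{lem:gaussprofile},
\[
d'(z)+zd(z)=-\Rcal_\tau(z),
\]
and Proposition~\ref{pro:trunc} (with $R=1$, keeping only $j=1,2$ and the remainder) bounds the source by
\[
\vert\Rcal_\tau(z)\vert\leqslant C\frac{\alpha}{\eta^{3}}\Im m_{N,\tau}(z)+C\frac{\alpha^2}{\eta^5}\Im m_{N,\tau}(z)\leqslant C\frac{\alpha}{\eta^3}\Im m_{N,\tau}(z),
\]
using $\alpha\leqslant c\eta^2$. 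The plan is to solve this first-order linear ODE along a vertical line, integrating from $\Im z=+\infty$ down to $E+\i\eta$:
\[
d(E+\i\eta)=\i\int_\eta^\infty \e^{\frac{(E+\i s)^2-(E+\i\eta)^2}{2}}\,\Rcal_\tau(E+\i s)\,\d s .
\]
This is legitimate because both $m_{N,\tau}$ and $m$ behave like $-1/z$ at infinity, and the homogeneous solutions $\e^{-z^2/2}$ grow along vertical lines, so the decaying solution is the one selected. The kernel has modulus $\e^{-(s^2-\eta^2)/2}\,\e^{\i E(s-\eta)}$, which is bounded by $\e^{-(s^2-\eta^2)/2}$ and therefore Gaussian-decaying in $s$, uniformly in $E$.

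Inserting the source bound gives
\[
\vert d(E+\i\eta)\vert\leqslant C\alpha\int_\eta^\infty \e^{-\frac{s^2-\eta^2}{2}}\frac{\Im m_{N,\tau}(E+\i s)}{s^3}\,\d s .
\]
A naive bound $\Im m_{N,\tau}\leqslant s^{-1}$ yields $\alpha\eta^{-3}$, which is one power of $\eta$ too many. To recover the sharp rate, write $\Im m_{N,\tau}\leqslant \Im m+\vert d\vert$, with $\Im m\leqslant C$ uniformly in $E$ for $s\leqslant 2$ by \eqref{eq:gaussderivunif}, and $\Im m\leqslant 1/s$ for large $s$. The $\Im m$ part contributes $C\alpha\int_\eta^\infty s^{-3}\e^{-(s^2-\eta^2)/2}\,\d s\leqslant C\alpha\eta^{-2}$.

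The main obstacle is closing the bootstrap on the $\vert d\vert$ part. Define $\Lambda(\eta)\coloneqq\sup_{E\in\Rbb}\sup_{s\geqslant\eta}s^2\vert d(E+\i s)\vert$, which is finite for each fixed $\eta>0$ since $\vert d\vert\leqslant 2/s$. The error term is then at most
\[
C\alpha\int_\eta^\infty \e^{-\frac{s^2-\eta^2}{2}}\, s^{-5}\,\Lambda(\eta)\,\d s\leqslant C\alpha\eta^{-4}\Lambda(\eta).
\]
Running the same estimate at every height $s'\geqslant\eta$ gives
\[
s'^2\,\vert d(E+\i s')\vert\leqslant C\alpha+C\frac{\alpha}{s'^2}\,\Lambda(\eta)\leqslant C\alpha+Cc\,\Lambda(\eta).
\]
Taking the supremum yields $\Lambda(\eta)\leqslant C\alpha+Cc\Lambda(\eta)$, so choosing $c$ small absorbs the last term and gives $\Lambda(\eta)\leqslant 2C\alpha$, i.e. $\vert d\vert\leqslant C\alpha\eta^{-2}$. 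Since $\Lambda(\eta)<\infty$ a priori, no continuity argument is needed. The only care required is the uniformity in $E\in\Rbb$, which holds because all bounds above are $E$-independent. The same argument applies to each $\tau\in\{\text{Full},+,-\}$.
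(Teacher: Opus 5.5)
Your argument is the paper's argument: the same equation $d'+zd=-\Rcal_\tau$, the same integration along the vertical line from $+\i\infty$, and the same source bound $|\Rcal_\tau(E+\i s)|\leqslant C\alpha s^{-3}\Im m_{N,\tau}(E+\i s)$ from Proposition~\ref{pro:trunc} with $R=1$. It also closes the same way, by a bootstrap with $c$ small. The only difference is the quantity you bootstrap. The paper uses $S_\eta=1+\sup_{E,\,\eta\leqslant s\leqslant 1}\Im m_{N,\tau}(E+\i s)$, which is finite for free ($S_\eta\leqslant 1+\eta^{-1}$). You use $\Lambda(\eta)=\sup_E\sup_{s\geqslant\eta}s^2|d(E+\i s)|$, which gives the $s^{-2}$ decay at all heights at once but is not automatically finite.

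That is where your write-up has a slip. The bound $|d|\leqslant 2/s$ only gives $s^2|d(E+\i s)|\leqslant 2s$, which is unbounded as $s\to\infty$, so it does not show $\Lambda(\eta)<\infty$. The absorption step $\Lambda\leqslant C\alpha+Cc\Lambda\Rightarrow\Lambda\leqslant 2C\alpha$ needs exactly that finiteness.

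Either of the following fixes it in one line. Your own ``naive'' estimate, applied at height $s$ with $\Im m_{N,\tau}(E+\i u)\leqslant u^{-1}$ in the integral, gives $|d(E+\i s)|\leqslant C\alpha s^{-3}$. This is Lemma~\ref{lem:largeeta}, whose proof does not use the present lemma, and it yields $\Lambda(\eta)\leqslant C\alpha\eta^{-1}<\infty$. Alternatively, $d$ is the Stieltjes transform of a zero-mass signed measure $\nu$ (the difference of two probability measures of unit variance). Then $d(z)=z^{-1}\int x(x-z)^{-1}\nu(\d x)$, so $|d(E+\i s)|\leqslant 2s^{-2}$.

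With either repair your proof is complete and equivalent to the paper's.
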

\begin{proof}
    We omit the $\tau$-dependence and abbreviate $\alpha\coloneqq\alpha_{N,q}$. We denote the difference
    $d(z) \coloneqq m_N(z)-m(z)$
    which is analytic on $\Cbb_+$ and, by Lemma~\ref{lem:ibp} and \eqref{eq:gaussode}, follows the equation
    \[
    d'(z)+zd(z) = -\Rcal(z).
    \]
    If we multiply the equation above by $\e^{\frac{z^2}{2}}$ we can rewrite it as
    \[
    \frac{\d}{\d z}\left(\e^{\frac{z^2}{2}}d(z)\right) = -\e^{\frac{z^2}{2}}\Rcal(z).
    \]
    For fixed $E$ and $\eta$, we consider $z_s = E+\i s$ for $s\geqslant \eta$ and define
    \[
    \Psi(s) = \e^{\frac{z_s^2}{2}}d(z_s)
    \quad\text{so that}\quad
    \Psi'(s) = \i\e^{\frac{z_s^2}{2}}(d'(z_s)+z_sd(z_s))=-\i\e^{\frac{z_s^2}{2}}\Rcal(z_s).
    \]
    Integrating over the vertical line, we obtain that for any $S>\eta$,
    \[
    \Psi(S)-\Psi(\eta) = -\i\int_{\eta}^S \e^{\frac{z_s^2}{2}}\Rcal(z_s)\d s.
    \]
 In addition, since $\vert \Psi(s)\vert \leqslant \frac{2}{s}\e^{\frac{E^2-s^2}{2}}\xrightarrow[s\to\infty]{}0$ using the fact that $m$ and $m_N$ are Stieltjes transforms, we can obtain the representation
    \[
    d(E+\i\eta)=\i\e^{-\frac{(E+\i\eta)^2}{2}}
    \int_\eta^{\infty}\e^{\frac{(E+\i s)^2}{2}}\Rcal(E+\i s)\d s
    \]
 taking the modulus, we obtain
    \begin{equation}\label{eq:bounddiff}
    \vert d(E+\i\eta)\vert \leqslant \e^{\frac{\eta^2}{2}}\int_\eta^\infty \e^{-\frac{s^2}{2}}\vert \Rcal(E+\i s)\vert \d s.
    \end{equation}
    We now use our bound on the remainder to obtain a bound on $\vert d(z)\vert.$ First denote
    \[
    S_\eta = 1+\sup_{\substack{E\in\Rbb\\\eta\leqslant s\leqslant 1}} \Im m_N(E+\i s)\leqslant 1+\frac{1}{\eta}.
    \]
    We note that we have $\Im m_N(E+\i s) \leqslant S_\eta$ for any $s\geqslant \eta$. Indeed, if $s\leqslant 1$ this is clear from the definition of $S_\eta$ and if $s\geqslant 1$ then we have
    \[
    \Im m_N(E+\i s) \leqslant \frac{1}{s}\leqslant 1\leqslant S_\eta.
    \]
    Since $\alpha\leqslant c\eta^2\leqslant s^2$ for every $s\geqslant\eta$,  Proposition~\ref{pro:trunc} with $R=1$ applies at every height $s\geqslant\eta$ and gives
    \[
    \vert \Rcal(E+\i s)\vert \leqslant \frac{C\alpha}{s^3}\,\Im m_N(E+\i s)\leqslant \frac{C\alpha\,S_\eta}{s^3}.
    \]
    Inserting this into \eqref{eq:bounddiff}, and using that $\e^{\frac{u^2-s^2}{2}}\leqslant1$ for $s\geqslant u$, we obtain for any $u\in[\eta,1]$ that
    \[
    \vert d(E+\i u)\vert \leqslant C\alpha S_\eta\int_u^\infty\frac{\d s}{s^3}=\frac{C\alpha S_\eta}{2u^2}
    \leqslant \frac{C\alpha S_\eta}{\eta^{2}}.
    \]
    Since the Gaussian density is bounded, $\Im m$ is bounded on $\Cbb_+$, and the bound above is uniform in $E$ and $u$; therefore
    \[
    S_\eta
    \leqslant C\left(
    1+\frac{\alpha}{\eta^{2}}S_\eta
    \right).
    \]
    Thus, choosing $c$ sufficiently small so that $C\alpha\eta^{-2}\leqslant\frac12$, we get $S_\eta \leqslant 2C$, which gives the final bound
    \[
    \vert d(E+\i\eta)\vert \leqslant \frac{C\alpha}{\eta^{2}},
    \]
    for some absolute constant $C>0$.
\end{proof}

We record separately the behaviour of $m_{N,\tau}-m$ at large spectral scales, where the differential equation can be integrated without any bootstrap.
\begin{lemma}
\label{lem:largeeta}
    There exists an absolute constant $C>0$ such that, for every $E\in\Rbb$, every $s>0$ with $\alpha_{N,q}\leqslant s^2$, and every $\tau\in\{\text{Full},+,-\}$, we have
    \[
    \left\vert m_{N,\tau}(E+\i s)-m(E+\i s)\right\vert \leqslant \frac{C\alpha_{N,q}}{s^3}.
    \]
\end{lemma}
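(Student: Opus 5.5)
We use the integral representation from the proof of Lemma~\ref{lem:expect}, but without any bootstrap. Write $d=m_{N,\tau}-m$. As there, Lemma~\ref{lem:ibp} and \eqref{eq:gaussode} give $d'+zd=-\Rcal_\tau$. Integrating along the vertical half-line from $E+\i s$ upward, and using that $\e^{z^2/2}d(z)\to0$ as $\Im z\to\infty$, we obtain \eqref{eq:bounddiff} at height $s$:
\[
\vert d(E+\i s)\vert\leqslant \e^{\frac{s^2}{2}}\int_s^\infty \e^{-\frac{u^2}{2}}\vert\Rcal_\tau(E+\i u)\vert\,\d u .
\]
It therefore suffices to bound $\vert \Rcal_\tau(E+\i u)\vert$ for every $u\geqslant s$ by a quantity that needs no a priori control on $\Im m_{N,\tau}$.

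For the remainder bound we invoke Proposition~\ref{pro:trunc} with $R=1$. Its hypothesis $\alpha_{N,q}\leqslant u^2$ holds for every $u\geqslant s$ because $\alpha_{N,q}\leqslant s^2$. It gives
\[
\vert\Rcal_\tau(E+\i u)\vert\leqslant \frac{C\alpha_{N,q}}{u^3}\Im m_{N,\tau}(E+\i u)+\frac{C\alpha_{N,q}^2}{u^5}\Im m_{N,\tau}(E+\i u).
\]
The second term is at most $C\alpha_{N,q}u^{-3}\Im m_{N,\tau}$, again since $\alpha_{N,q}\leqslant u^2$. We then use the trivial Stieltjes-transform bound $\Im m_{N,\tau}(E+\i u)\leqslant u^{-1}$ in place of the bootstrap quantity $S_\eta$. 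This yields $\vert \Rcal_\tau(E+\i u)\vert\leqslant C\alpha_{N,q}u^{-4}$ uniformly in $E$.

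Inserting this into the representation and using $\e^{(s^2-u^2)/2}\leqslant1$ for $u\geqslant s$, we get
\[
\vert d(E+\i s)\vert\leqslant C\alpha_{N,q}\int_s^\infty u^{-4}\,\d u=\frac{C\alpha_{N,q}}{3s^3}.
\]
This is the claim, with an absolute constant and no smallness condition beyond $\alpha_{N,q}\leqslant s^2$.

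There is no real obstacle here. The one point to check is that the constants in Lemma~\ref{lem:multiins} with $(k,j)\in\{(3,1),(4,2),(5,3)\}$ are absolute. They are, because only the fixed values $R=1$ and these $(k,j)$ are used, so every constant depends on neither $N$ nor $q$. We also need the vanishing of $\e^{z^2/2}d$ at infinity, which follows from $\vert d(E+\i u)\vert\leqslant 2/u$.
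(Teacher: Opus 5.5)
Your proof is correct and is essentially the paper's argument. You bound $\vert\Rcal_\tau(E+\i u)\vert\leqslant C\alpha_{N,q}u^{-4}$ for all $u\geqslant s$, using Proposition~\ref{pro:trunc} with $R=1$ and the trivial bound $\Im m_{N,\tau}(E+\i u)\leqslant u^{-1}$ in place of any bootstrap, and then insert this into the vertical-line representation \eqref{eq:bounddiff} with $\e^{(s^2-u^2)/2}\leqslant1$; your extra bookkeeping of the cases $(k,j)\in\{(3,1),(4,2),(5,3)\}$ of Lemma~\ref{lem:multiins} simply makes explicit why the constant is absolute.
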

\begin{proof}
    We omit the $\tau$-dependence, abbreviate $\alpha\coloneqq\alpha_{N,q}$ and set $d_0\coloneqq m_N-m$. For every $u\geqslant s$ we have $\Im m_N(E+\i u)\leqslant u^{-1}$ and $\alpha\leqslant s^2\leqslant u^2$, so that Proposition \ref{pro:trunc} gives
    \[
    \left\vert \Rcal(E+\i u)\right\vert \leqslant \frac{C\alpha}{ u^3}\,\Im m_N(E+\i u)\leqslant \frac{C\alpha}{u^4}
    \qquad\text{for every }u\geqslant s.
    \]
    By \eqref{eq:gaussode} and Lemma~\ref{lem:ibp}, the function $d_0$ satisfies $d_0'(z)+zd_0(z)=-\Rcal(z)$, so that, arguing exactly as in the derivation of \eqref{eq:bounddiff},
    \[
    \left\vert d_0(E+\i s)\right\vert \leqslant \e^{\frac{s^2}{2}}\int_s^\infty \e^{-\frac{u^2}{2}}\left\vert \Rcal(E+\i u)\right\vert \d u
    \leqslant C\alpha\int_s^\infty \frac{\d u}{u^4}=\frac{C\alpha}{3s^3},
    \]
    where we used that $\e^{\frac{s^2-u^2}{2}}\leqslant1$ for $u\geqslant s$.
\end{proof}

The two previous lemmas combine into the following a priori bounds, which we shall use repeatedly and which explain why the whole scheme only requires the smallness of $\alpha_{N,q}\eta^{-2}$.
\begin{lemma}
\label{lem:apriori}
    There exist absolute constants $c,C>0$ such that, for every $\eta\in(0,1]$ with $\alpha_{N,q}\leqslant c\,\eta^2$ and every $\tau\in\{\text{Full},+,-\}$,
    \begin{equation}
    \label{eq:apriorimn}
    \Im m_{N,\tau}(E+\i s)\leqslant C
    \qquad\text{for every }E\in\Rbb\text{ and }s\geqslant \frac{\eta}{2},
    \end{equation}
    and
    \begin{equation}
    \label{eq:aprioridiff}
    \left\vert m_{N,\tau}(w)-m(w)\right\vert \leqslant \frac{C\alpha_{N,q}}{(\Im w)^{2}}
    \qquad\text{for every }w\in\Cbb_+\text{ with }\frac{\eta}{2}\leqslant \Im w\leqslant 2.
    \end{equation}
\end{lemma}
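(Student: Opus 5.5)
The plan is to combine Lemma~\ref{lem:expect}, which handles heights in $[\eta,1]$, with Lemma~\ref{lem:largeeta}, which handles large heights. The only real work is passing from the threshold $\eta$ down to $\eta/2$, and this is done by adjusting the small constant $c$.

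\emph{Choosing constants.} Let $c_0,C_0$ be the constants of Lemma~\ref{lem:expect}, and set $c\coloneqq c_0/4$. Then $\alpha_{N,q}\leqslant c\eta^2$ gives $\alpha_{N,q}\leqslant c_0(\eta/2)^2$. Hence Lemma~\ref{lem:expect} applies at every height $u\in[\eta/2,1]$, since $\alpha_{N,q}\leqslant c_0(\eta/2)^2\leqslant c_0u^2$. It yields
\[
\vert m_{N,\tau}(E+\i u)-m(E+\i u)\vert\leqslant C_0\alpha_{N,q}u^{-2}
\]
uniformly in $E\in\Rbb$. This is \eqref{eq:aprioridiff} for $\Im w\in[\eta/2,1]$.

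\emph{Heights in $[1,2]$.} Here I use Lemma~\ref{lem:largeeta}. Its hypothesis $\alpha_{N,q}\leqslant s^2$ holds trivially because $\alpha_{N,q}\leqslant c\leqslant1$. It gives
\[
\vert m_{N,\tau}-m\vert\leqslant C\alpha_{N,q}s^{-3}\leqslant C\alpha_{N,q}\leqslant 4C\alpha_{N,q}s^{-2}.
\]
Together with the previous step, this proves \eqref{eq:aprioridiff} on the whole strip $\eta/2\leqslant\Im w\leqslant2$.

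\emph{Bound on $\Im m_{N,\tau}$.} For $s\geqslant1$ we have the trivial bound $\Im m_{N,\tau}(E+\i s)\leqslant s^{-1}\leqslant1$. For $s\in[\eta/2,1]$, the previous step gives
\[
\Im m_{N,\tau}\leqslant \Im m+C_0\alpha_{N,q}(\eta/2)^{-2}\leqslant \Vert\Im m\Vert_\infty+4C_0c.
\]
Here $\Im m\leqslant\pi\sup\rho_G$ is bounded on $\Cbb_+$ because $\rho_G$ is bounded, as already noted in the proof of Lemma~\ref{lem:expect}. This gives \eqref{eq:apriorimn} with an absolute constant.

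\emph{Where care is needed.} The only delicate point is that the proof of Lemma~\ref{lem:expect} relies on Proposition~\ref{pro:trunc} with $R=1$ at all heights $s\geqslant u$. That proposition requires $\alpha_{N,q}\leqslant s^2$, which holds since $\alpha_{N,q}\leqslant c_0u^2\leqslant u^2$. Its bootstrap only needs $C\alpha_{N,q}u^{-2}\leqslant\frac12$, which is guaranteed by the choice of $c$. So no new argument is needed, and all constants are absolute.
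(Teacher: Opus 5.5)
Your proposal is correct and matches the paper's proof essentially step for step. Both take $c=c_0/4$, apply Lemma~\ref{lem:expect} at heights in $[\eta/2,1]$, apply Lemma~\ref{lem:largeeta} together with $(\Im w)^{-3}\leqslant 4(\Im w)^{-2}$ on $[1,2]$, and use the trivial bound $s^{-1}$ for $s\geqslant1$. The only difference is cosmetic: you bound $\Im m$ by $\pi\sup\rho_G$, whereas the paper invokes \eqref{eq:gaussderivunif} with $k=0$.
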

\begin{proof}
    Let $c_0$ be the constant of Lemma~\ref{lem:expect} and set $c\coloneqq c_0/4$. For $s\geqslant\eta/2$ we then have $\alpha_{N,q}\leqslant c\eta^2\leqslant 4c\,s^2=c_0s^2$, so that Lemma~\ref{lem:expect} may be applied at every height $s\in[\eta/2,1]$ and gives $\Im m_{N,\tau}(E+\i s)\leqslant \Im m(E+\i s)+C\alpha_{N,q}s^{-2}\leqslant C+4Cc\leqslant C$, by \eqref{eq:gaussderivunif} with $k=0$. For $s\geqslant1$ we simply use $\Im m_{N,\tau}(E+\i s)\leqslant s^{-1}\leqslant1$, which proves \eqref{eq:apriorimn}. The bound \eqref{eq:aprioridiff} follows from Lemma~\ref{lem:expect} when $\Im w\leqslant1$, and from Lemma~\ref{lem:largeeta} when $1\leqslant\Im w\leqslant2$, since then $(\Im w)^{-3}\leqslant4(\Im w)^{-2}$.
\end{proof}

We can now expand the remainder $\Rcal_\tau$ to an arbitrary order in $\alpha_{N,q}$, which is the key input for the local law.
\begin{proposition}
\label{pro:highode}
    Let $R\geqslant1$. There is a constant $C_R>0$ such that $\vert c_r\vert \leqslant C_R$ for every $r\leqslant R$ and such that, uniformly for $z=E+\i\eta\in\Cbb_+$ with $\alpha_{N,q}\leqslant\eta^2$ and for $\tau\in\{\text{Full},+,-\}$,
    \begin{equation}
    \label{eq:highode}
    m_{N,\tau}'(z)+zm_{N,\tau}(z)+1=\sum_{r=1}^{R}\frac{c_r\,\alpha_{N,q}^r}{(2r+1)!}\,m_{N,\tau}^{(2r+1)}(z)+\Escr_{R,\tau}(z),
    \qquad
    \left\vert \Escr_{R,\tau}(z)\right\vert \leqslant \frac{C_R\,\alpha_{N,q}^{R+1}}{\eta^{2R+3}}\Im m_{N,\tau}(z),
    \end{equation}
    with $c_r$ as in \eqref{eq:crdef}. 
\end{proposition}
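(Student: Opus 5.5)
The plan is to start from the truncated expansion of Proposition~\ref{pro:trunc}, $\Rcal=\sum_{j=1}^{2R}\Rcal^{(j)}+\O{\alpha^{R+1}\eta^{-2R-3}\Im m_N}$. The argument has two parts: a \emph{structural} step and an \emph{identification} step.

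\emph{Structural step.} I would show that there are deterministic numbers $b_{k}=b_k(N,q)$, $k\geqslant 1$, with $\vert b_k\vert\leqslant C_R\,\alpha^{\lceil (k-1)/2\rceil}$, such that
\[
-\Rcal(z)=\sum_{k=2}^{2R+1} b_k\,\Eds\left[\tr\left(G(z)^{k+1}\right)\right]+\O{\frac{\alpha^{R+1}}{\eta^{2R+3}}\Im m_N(z)}.
\]
Since $\Eds[\tr(G^{k+1})]=m_N^{(k)}/k!$, this is an equation of the desired form, with possibly some even-order terms still present. To obtain it, I would treat each $\Rcal^{(j)}$ as follows:
\begin{itemize}
\item Write every $\Delta_A$ as $2M^{-1/2}\sum_{B\in\Ical_A}J_B\Gamma_B$ and integrate by parts in the $J_B$, exactly as in the proof of Lemma~\ref{lem:multiins}.
\item Each contraction pairs two monomials $\Gamma_B$. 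An unpaired $J_B$ instead differentiates a resolvent and creates a new $\Gamma_B$, which must later be paired again or is already small.
\item Bring the two members of a pair next to each other by moving $\Gamma_B$ across the intermediate factors. Moving it across another Clifford monomial costs a deterministic sign. Moving it across a resolvent uses $\Gamma_BG=G_B\Gamma_B$ and then $G_B=G+G_B\Delta_BG$.
\item After the pair has been brought together, $\Gamma_B^2=\Id$ removes it. The leftover correction $G_B\Delta_BG$ contains a fresh insertion $\Delta_B$, and by Lemma~\ref{lem:multiins} it gains a factor $\alpha\eta^{-2}$ relative to the main term.
\item Iterate this reduction. After finitely many steps, depending on $R$, every trace is either a monomial $\tr(G^{k+1})$ multiplied by an average over tuples of $q$-subsets of sign products, or a term with enough $\Delta$ insertions to fall into the error by Lemma~\ref{lem:multiins}.
\end{itemize}

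The sign averages are deterministic and depend only on $N$ and $q$. The counting in the proof of Lemma~\ref{lem:multiins} bounds each by the appropriate power of $\alpha$. The $\eta$ powers are likewise controlled by Lemma~\ref{lem:multiins} together with $\alpha\leqslant\eta^2$. Since the same combinatorics occurs in every sector, the coefficients $b_k$ are sector independent.

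\emph{Identification step.} Rather than computing the $b_k$ combinatorially, I would identify them from the moments. Set $z=\i y$ and let $y\to\infty$. Then $m_N(z)=-\sum_{l\geqslant0}\mu_{l}z^{-l-1}$ as an asymptotic expansion, with odd moments vanishing; in a parity sector the moments agree with the $\mu_l$ for $lq<N$. The error term is $\O{y^{-2R-4}}$, because $\Im m_N\leqslant y^{-1}$, and each $m_N^{(k)}$ has an explicit expansion.

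Matching the coefficients of $z^{-l}$ on both sides for $l\leqslant 2R+2$ gives a triangular system for the $b_k$. Indeed, $m_N^{(k)}$ first contributes at order $z^{-k-1}$, with coefficient $\pm k!\,\mu_0$. The equations at odd powers of $z^{-1}$ involve only odd moments, so they force $b_k=0$ for $k$ even. The equations at $z^{-2r-2}$ reproduce exactly the recursion \eqref{eq:crdef}, which yields $b_{2r+1}(2r+1)!^{-1}\cdot(2r+1)!=c_r\alpha^r/(2r+1)!$. In that case the left side is $m_N'+zm_N+1=\sum(\ldots)$, and $\mu_{2r+2}-(2r+1)\mu_{2r}$ appears on the left. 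The bound $\vert c_r\vert\leqslant C_R$ then follows from $\vert b_{2r+1}\vert\leqslant C_R\alpha^r$. For this last deduction it is essential that $b_{2r+1}$ carries $\alpha^r$: this needs the parity count, namely that the first nonvanishing order of $\Rcal^{(j)}$ pairs up insertions so that $2r+1$ resolvent excess costs $\alpha^{r}$, as in Proposition~\ref{pro:trunc}.

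I expect the main obstacle to be the structural step. The difficulty is the bookkeeping that guarantees that every term produced by commuting monomials through resolvents either collapses to a pure power $\tr(G^{k+1})$ with a deterministic weight of the correct $\alpha$-order, or carries enough insertions to fall into the error $\alpha^{R+1}\eta^{-2R-3}\Im m_N$. Some terms contain differentiated resolvents with unpaired monomials; for these I would first verify that Lemma~\ref{lem:multiins}, applied with $j$ counting all uncontracted insertions, gives exactly the required power $\eta^{-(k-1+(j\bmod 2))}$, so that the $\eta$ exponent never exceeds $2R+3$ at order $\alpha^{R+1}$.
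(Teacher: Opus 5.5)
Your proposal is correct and follows essentially the paper's route: Proposition~\ref{pro:trunc}, integration by parts in the $J_B$, reordering of the paired $\Gamma_B$'s via $\Gamma_BG=G\Gamma_B+G_B\Delta_BG\Gamma_B$ to get $z$-independent, sector-independent coefficients of order $\alpha^r$ in front of $\Eds[\tr(G^{k+1})]$, and identification of these coefficients from the large-$\eta$ moment expansion, which is exactly Lemma~\ref{lem:coeffmoments}. There are two minor differences. First, the paper rules out even-order derivatives directly, since a contribution of order $\alpha^r$ always carries exactly $2r+2$ resolvents, whereas you remove them afterwards by matching odd powers of $z^{-1}$, which is legitimate because the odd moments vanish. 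Second, your normalization line is garbled and should read $b_{2r+1}=c_r\alpha^r$, which is what your bound $|c_r|\leqslant C_R$ actually uses.
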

\begin{proof}
    We omit the $\tau$-dependence and abbreviate $\alpha\coloneqq\alpha_{N,q}$. By Proposition~\ref{pro:trunc} it suffices to expand each $\Rcal^{(j)}$, $1\leqslant j\leqslant 2R$, and to collect the terms of equal order in $\alpha$.

    Fix $j$, expand each of the $j$ factors $\Delta_A$ in $\Rcal^{(j)}$ by \eqref{eq:defdeltaa} and integrate by parts as in the proof of Lemma~\ref{lem:multiins}. This produces a sum over the partial pairings $P$ of $\unn1j$. A pairing with $a$ pairs contributes at order $\alpha^{r}$ with $r=j-a$ and, since $b=j-2a$ derivatives act on the trace and each of them creates one extra resolvent factor, it produces normalized traces of exactly
    \[
    (j+2)+(j-2a)=2(j-a)+2=2r+2
    \]
    factors of the form $UGU^\ast$; moreover, by Lemma~\ref{lem:multiins}, each of these terms is bounded by $C_{R}\,\alpha^{r}\eta^{-2r-1}\Im m_N(z)$. Hence \emph{every} contribution of order $\alpha^r$, and this for every value of $j$, carries exactly $2r+2$ resolvent factors and obeys this bound. In particular, since $\alpha\leqslant\eta^2$, all the contributions with $r\geqslant R+1$ are bounded by
    \[
    C_R\,\alpha^{r}\eta^{-2r-1}\Im m_N(z)=C_R\,\frac{\alpha^{R+1}}{\eta^{2R+3}}\left(\frac{\alpha}{\eta^2}\right)^{r-R-1}\Im m_N(z)\leqslant C_R\,\frac{\alpha^{R+1}}{\eta^{2R+3}}\Im m_N(z),
    \]
    and may be absorbed into the error term of \eqref{eq:highode}.

    It remains to see that each contribution of order $\alpha^r$, $1\leqslant r\leqslant R$, equals a bounded constant times $\Eds\left[\tr\left(G^{2r+2}\right)\right]$, up to terms of order $\alpha^{r+1}$. In each such trace the resolvent factors are of the form $UGU^\ast$ with $U$ a product of matrices $\Gamma_B$, where $B$ ranges over the summation indices.

    We first record that these matrices come in pairs. Fix a pairing $P$ with $a$ pairs, and consider a summation index $B_i$. If $i$ is covered by $P$, say $\{i,i'\}\in P$, then $B_i=B_{i'}$ and the two matrices $\Gamma_{B_i}$ and $\Gamma_{B_{i'}}$ already present in $\Xi$ carry the same index. If $i$ is not covered by $P$, then $\partial_{J_{B_i}}$ acts on the trace and, by the differentiation rule of the proof of Lemma~\ref{lem:multiins}, inserts a \emph{second} matrix $\Gamma_{B_i}$ next to the resolvent it splits. In either case each of the $j-a=r$ distinct indices occurs exactly twice, so that the trace contains exactly $2r$ matrices $\Gamma_B$ and $2r+2$ resolvent factors, in agreement with the counting above, and every $\Gamma_B$ has a well-defined partner.

    Using repeatedly the identity
    \[
    \Gamma_BG=G\Gamma_B+G_B\Delta_BG\Gamma_B,
    \]
    which is the resolvent identity $\Gamma_BG\Gamma_B=G_B=G+G_B\Delta_BG$ multiplied by $\Gamma_B$ on the right, one may bring every $\Gamma_B$ next to its partner and cancel the two using $\Gamma_B^2=\Id_D$. Each application of this identity replaces a product of $2r+2$ resolvent factors by the same product with one $\Gamma_B$ moved, plus a product of $2r+3$ resolvent factors carrying one additional insertion $\Delta_B$; integrating that insertion by parts turns the latter into products of $2r+4$ resolvent factors at order $\alpha^{r+1}$, in accordance with the counting above. What is left after all the $\Gamma_B$'s have been cancelled is $\Eds\left[\tr\left(G^{2r+2}\right)\right]$ multiplied by an average, over the summation indices, of products of the signs $\pm1$ produced by the reordering; each such average is bounded by $1$ in absolute value, and the number of terms is bounded by a constant depending only on $R$. This procedure terminates. Indeed, at a fixed order $\alpha^r$ the main term of each application of the identity above leaves the number of matrices $\Gamma_B$ unchanged and moves one of them one position closer to its partner among the at most $2r+2$ available positions, so that finitely many applications, in number bounded by a constant depending only on $R$, exhaust that order; and each error term raises the order in $\alpha$ by one, so that the outer recursion has depth at most $R+1$. We also stress that the signs produced by the reordering are pure Clifford combinatorics, determined by the indices $A,B_1,\dots,B_j$ and by the positions of the matrices, and in particular do \emph{not} depend on $z$; this is what allows the resulting constants to be pulled out of the trace and is what Lemma~\ref{lem:coeffmoments} below requires. Iterating the procedure and stopping as soon as the order in $\alpha$ exceeds $R$, we conclude that
    \[
    \Rcal(z)=-\sum_{r=1}^{R}\frac{\gamma_r\alpha^r}{(2r+1)!}\,m_N^{(2r+1)}(z)+\O{\frac{\alpha^{R+1}}{\eta^{2R+3}}\Im m_N(z)}
    \]
    for some real numbers $\gamma_r=\gamma_r(N,q)$ with $\vert\gamma_r\vert\leqslant C_R$, where we used that $\partial_z^{2r+1}G=(2r+1)!\,G^{2r+2}$ and therefore $\Eds\left[\tr\left(G^{2r+2}\right)\right]=\frac{m_N^{(2r+1)}(z)}{(2r+1)!}$. By Lemma~\ref{lem:ibp} this is \eqref{eq:highode} with $c_r$ replaced by $\gamma_r$, and Lemma~\ref{lem:coeffmoments} below shows that necessarily $\gamma_r=c_r$ for every $r\leqslant R$, which also proves that $\vert c_r\vert\leqslant C_R$. Applied in the sector $\tau$, Lemma~\ref{lem:coeffmoments} produces the constants built from the moments $\mu_{2k}^\tau$; these coincide with $\mu_{2k}$ for $k\leqslant R+1$ by the discussion following Definition~\ref{def:thetas}, so that the same $c_r$, and hence the same profile $\Phi_R$, serve the three sectors. Its hypothesis is met because $\Im m_{N,\tau}(E+\i\eta)\leqslant\eta^{-1}$ and $\alpha\leqslant\eta^2$ for $\eta\geqslant1$.

    We stress that the reordering above is only used to produce \emph{bounded} constants, which is all that we need; their explicit combinatorial evaluation, which already for $r=2$ involves averages of Clifford signs that are not determined by $\alpha_{N,q}$ alone, is entirely bypassed by Lemma~\ref{lem:coeffmoments}.
\end{proof}

\begin{lemma}
\label{lem:coeffmoments}
    Fix $N$ and $q$, let $R\geqslant1$, and assume that for some real numbers $\gamma_1,\dots,\gamma_R$ and some $C>0$ we have
    \[
    m_N'(z)+zm_N(z)+1=\sum_{r=1}^{R}\frac{\gamma_r\,\alpha_{N,q}^r}{(2r+1)!}m_N^{(2r+1)}(z)+\Escr(z),
    \qquad
    \vert \Escr(E+\i\eta)\vert \leqslant \frac{C\alpha_{N,q}^{R+1}}{\eta^{2R+4}},
    \]
    for all $E\in\Rbb$ and $\eta\geqslant1$. Then $\gamma_r=c_r$ for every $1\leqslant r\leqslant R$, with $c_r$ as in \eqref{eq:crdef}.
\end{lemma}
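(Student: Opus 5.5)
The plan is to expand both sides of the assumed identity in powers of $z^{-1}$ as $\Im z\to\infty$ along a vertical line, and to match coefficients. Since $H_q$ is a bounded operator (deterministically bounded by some $K_N$ in norm, as $J$ is Gaussian one uses instead that all moments are finite), the moment expansion
\[
m_N(z)=-\sum_{k=0}^{2K}\frac{\mu_k}{z^{k+1}}+\O{|z|^{-2K-2}}
\]
holds for $z=\i\eta$, $\eta\to\infty$. To justify it, expand $\Eds[\tr (H_q-z)^{-1}]$ via the finite geometric series with remainder $z^{-2K-1}\Eds[\tr(H_q^{2K+1}(H_q-z)^{-1})]$, bounded using Gaussian moments. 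The odd moments vanish. Derivatives are handled by differentiating the same finite expansion term by term, using $\partial_z^{l}(H-z)^{-1}=l!(H-z)^{-l-1}$, so that
\[
m_N^{(l)}(z)=-\sum_k \mu_k\,\partial_z^l z^{-k-1}+\O{|z|^{-2K-2-l}}.
\]

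Next, I would plug these expansions into the identity with $z=\i\eta$ and collect the coefficient of $z^{-2r-2}$ for $r=1,\dots,R$. On the left, $m_N'+zm_N+1$ has coefficient of $z^{-2r-2}$ equal to $(2r+1)\mu_{2r}-\mu_{2r+2}$. Indeed, $zm_N+1=-\sum_{k\ge1}\mu_{2k}z^{-2k}$ and $m_N'=\sum_k(2k+1)\mu_{2k}z^{-2k-2}$; the $z^0$ term cancels by $\mu_0=1$ and the $z^{-2}$ term cancels by $\mu_2=1$. On the right, $m_N^{(2s+1)}=\sum_k (2k+2s+1)!/(2k)!\,\mu_{2k}z^{-2k-2s-2}$, so the coefficient of $z^{-2r-2}$ in $\frac{\gamma_s\alpha^s}{(2s+1)!}m_N^{(2s+1)}$ is $\gamma_s\alpha^s\binom{2r+1}{2s+1}\mu_{2r-2s}$ (with $k=r-s$). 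The error satisfies $|\Escr(\i\eta)|\le C\alpha^{R+1}\eta^{-2R-4}$. This is $o(\eta^{-2r-2})$ for every $r\le R$, so it does not contribute to these coefficients.

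Uniqueness of asymptotic expansions in $\eta^{-1}$ (multiply by $z^{2r+2}$ and let $\eta\to\infty$ inductively in $r$) then gives, for each $1\le r\le R$,
\[
(2r+1)\mu_{2r}-\mu_{2r+2}=\sum_{s=1}^{r}\binom{2r+1}{2s+1}\gamma_s\alpha^s\mu_{2r-2s}.
\]
Since $\mu_0=1$, this is exactly the recursion \eqref{eq:crdef} solved for $\gamma_r\alpha^r$. Induction on $r$, using $\alpha_{N,q}>0$, yields $\gamma_r=c_r$.

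The only step needing care is the justification of the asymptotic expansions at fixed $N$. This includes controlling the remainders uniformly and checking that all orders up to $z^{-2R-3}$ are legitimately matched while the error stays strictly subleading. That is routine since all moments of $H_q$ are finite at fixed $N$.
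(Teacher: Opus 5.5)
Your proposal is correct and follows essentially the paper's route: expand $m_N$ and its derivatives in inverse powers of $z=\i\eta$ using the finite moment expansion, substitute into the identity, and use the $\O{\eta^{-2R-4}}$ bound on $\Escr$ to force the coefficients of $z^{-2r-2}$, $r\leqslant R$, to vanish. This yields exactly the recursion \eqref{eq:crdef}, so induction and $\alpha_{N,q}>0$ give $\gamma_r=c_r$. The only cosmetic difference is that you control the derivatives of the remainder by applying Leibniz to the exact resolvent remainder, whereas the paper uses Cauchy's integral formula on a disc; both are equally valid.
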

\begin{proof}
    We abbreviate $\alpha\coloneqq\alpha_{N,q}$ and $\mu_k\coloneqq\Eds\left[\tr\left(H_q^{k}\right)\right]$, which vanishes for $k$ odd since $H_q$ and $-H_q$ have the same law. For fixed $N$ and $q$ all these moments are finite, and so are $\mu_k^\ast\coloneqq\Eds\left[\tr\vert H_q\vert^{k}\right]$.
    %the couplings having moments of all orders.
Iterating the identity $(\lambda-z)^{-1}=-z^{-1}+\lambda z^{-1}(\lambda-z)^{-1}$ gives, for every integer $K\geqslant0$, every $\lambda\in\Rbb$ and every $z\in\Cbb_+$, we obtain
    \[
    \frac{1}{\lambda-z}=-\sum_{k=0}^{K}\frac{\lambda^k}{z^{k+1}}+\frac{1}{z^{K+1}}\cdot\frac{\lambda^{K+1}}{\lambda-z},
    \]
    so that, applying $\Eds\,\tr$ to $H_q$ and using $\Vert (H_q-z)^{-1}\Vert\leqslant(\Im z)^{-1}$,
    \begin{equation}
    \label{eq:asympexp}
    \left\vert m_N(z)+\sum_{k=0}^{K}\frac{\mu_k}{z^{k+1}}\right\vert
    \leqslant
    \frac{\mu^\ast_{K+1}}{\vert z\vert^{K+1}\,\Im z}
    \qquad\text{for every }z\in\Cbb_+.
    \end{equation}
    Every point $w$ of the disc of radius $\frac{\Im z}{2}$ centred at $z$ satisfies $\Im w\geqslant\frac{\Im z}{2}$ and $\vert w\vert\geqslant\frac{\vert z\vert}{2}$, so that Cauchy's integral formula applied to the difference in \eqref{eq:asympexp} shows that the expansion may be differentiated: for all integers $K,j\geqslant0$ there is $C_{K,j}>0$, depending also on $N$ and $q$, such that
    \begin{equation}
    \label{eq:asympexpderiv}
    \left\vert m_N^{(j)}(\i\eta)+\sum_{k=0}^{K}\frac{(-1)^j(k+j)!}{k!}\cdot\frac{\mu_k}{(\i\eta)^{k+j+1}}\right\vert
    \leqslant
    \frac{C_{K,j}}{\eta^{K+2+j}}
    \qquad\text{for }\eta\geqslant1.
    \end{equation}
    
    To identify the coefficients $\gamma_r$, it suffices to compare the
asymptotic expansions along the imaginary axis as $\eta\to\infty$.
We therefore set $z=\i\eta$, so that $\vert z\vert=\eta$, and apply
\eqref{eq:asympexpderiv} with $K=2R+2$. Since $\mu_0=1$ and the odd
moments vanish, we obtain
\[
    m_N'(z)+zm_N(z)+1
    =\sum_{n=0}^{R}
    \bigl[(2n+1)\mu_{2n}-\mu_{2n+2}\bigr]z^{-2n-2}
    +\O{\eta^{-2R-3}},
\]
and, for every $1\leqslant r\leqslant R$,
\[
    m_N^{(2r+1)}(z)
    =\sum_{k=0}^{R-r}
    \frac{(2k+2r+1)!}{(2k)!}\,
    \mu_{2k}\,z^{-2k-2r-2}
    +\O{\eta^{-2R-4}}.
\]
Here we retain only the powers $z^{-2n-2}$ with $n\leqslant R$;
all remaining terms in the finite expansions are $\O{\eta^{-2R-4}}$.
The larger remainder in the first identity comes from multiplying the
$\O{\eta^{-2R-4}}$ remainder of $m_N(z)$ by $z$.
Substituting these expansions into the assumed differential identity yields
    \[
    \Escr(\i\eta)=\sum_{n=0}^{R}\frac{\Lambda_n}{(\i\eta)^{2n+2}}+\O{\eta^{-2R-3}},
    \qquad
    \Lambda_n=(2n+1)\mu_{2n}-\mu_{2n+2}-\sum_{r=1}^{\min(n,R)}\frac{(2n+1)!\,\gamma_r\alpha^r}{(2r+1)!\,(2n-2r)!}\,\mu_{2n-2r},
    \]
    the implicit constant depending on $N,q,R$ and on $\gamma_1,\dots,\gamma_R$, but not on $\eta$.

    If $\Lambda_n\neq0$ for some $n\leqslant R$, take $n$ minimal with this property. Then $\Escr(\i\eta)=\Lambda_n(\i\eta)^{-2n-2}+\O{\eta^{-2n-4}}+\O{\eta^{-2R-3}}$, and both error terms are $o(\eta^{-2n-2})$ since $n\leqslant R$; letting $\eta\to\infty$ would therefore give $\vert \Escr(\i\eta)\vert \geqslant \frac12\vert \Lambda_n\vert \eta^{-2n-2}$ for $\eta$ large, contradicting the assumed bound since $2n+2\leqslant 2R+2<2R+4$. Therefore $\Lambda_n=0$ for every $n\leqslant R$, and comparing with \eqref{eq:crdef} gives $\gamma_r=c_r$ by induction on $r$, the term $r=n$ of the sum defining $\Lambda_n$ contributing exactly $\gamma_n\alpha^n$.
\end{proof}

Note that the hypothesis of Lemma~\ref{lem:coeffmoments} is satisfied by
 the error term of \eqref{eq:highode}, since
$\Im m_{N,\tau}(E+\i\eta)\leqslant\eta^{-1}$.
We now prove stability of the differential equation \eqref{eq:highode}.
We include all positive spectral heights in the statement so that the
same induction controls both the derivatives and the vertical integral
appearing in the proof.

\begin{proposition}
\label{pro:higherstab}
Let $R\geqslant0$ be an integer, and assume the conditions on $N$ and $q$
of Theorem~\ref{theo:locallaw}.
There exist constants $c_R^\ast,C_R>0$, depending only on $R$, such that,
for $N$ sufficiently large, every $\tau\in\{\text{Full},+,-\}$ and every
$z=E+\i\eta$ with $E\in\Rbb$, $\eta>0$ and
$\alpha_{N,q}\leqslant c_R^\ast\eta^2$, we have
\begin{equation}
\label{eq:higherstab}
    \left\vert m_{N,\tau}(z)-\Phi_R(z)\right\vert
    \leqslant
    \frac{C_R\alpha_{N,q}^{R+1}}{\eta^{2R+2}(1+\eta)}.
\end{equation}
\end{proposition}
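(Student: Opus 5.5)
The plan is an induction on $R$, with Lemma~\ref{lem:expect} and Lemma~\ref{lem:largeeta} as the base case $R=0$. Combined, they give $|m_N-m|\leqslant C\alpha\eta^{-2}$ for $\eta\leqslant 1$ and $\leqslant C\alpha\eta^{-3}$ for $\eta\geqslant1$. This is exactly $C\alpha\eta^{-2}(1+\eta)^{-1}$ up to constants. For the inductive step, set $d_R\coloneqq m_N-\Phi_R$. I subtract \eqref{eq:thetarec}, summed with weights $\alpha^s$ for $s\leqslant R$ (together with $L[m]=-1$), from \eqref{eq:highode}. This gives
\[
L[d_R]=\sum_{r=1}^{R}\frac{c_r\alpha^r}{(2r+1)!}\Big(m_N^{(2r+1)}-\Phi_{R-r}^{(2r+1)}\Big)-\sum_{r,s\leqslant R,\ r+s>R}\frac{c_r\alpha^{r+s}}{(2r+1)!}\Theta_s^{(2r+1)}+\Escr_R .
\]
That is, $L[d_R]$ equals a linear combination of $\alpha^r d_{R-r}^{(2r+1)}$, plus explicit profile terms of order $\alpha^{R+1}$ or higher, plus the error $\Escr_R$.

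Next I bound each term at a height $s\geqslant\eta$. The error satisfies $|\Escr_R(E+\i s)|\leqslant C_R\alpha^{R+1}s^{-2R-3}\Im m_N(E+\i s)$. Lemma~\ref{lem:apriori} gives $\Im m_N\leqslant C$ for $s\geqslant\eta/2$, and $\Im m_N\leqslant s^{-1}$ for all $s$. Hence $|\Escr_R|\leqslant C\alpha^{R+1}s^{-2R-3}(1+s)^{-1}$.

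The profile terms are controlled by \eqref{eq:thetabounds}. For $s\leqslant 2$ they are bounded by $C\alpha^{R+1}$, which is at most $C\alpha^{R+1}s^{-2R-3}$. For $s\geqslant 1$, $\alpha^{r+s'}\Theta_{s'}^{(2r+1)}$ decays like $s^{-2s'-4-2r}$. This decay is at least as fast as $s^{-2R-4}$, since $r+s'\geqslant R+1$.

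For the terms involving $d_{R-r}$, I apply the induction hypothesis on a disc of radius $s/2$ centred at $E+\i s$, and then Cauchy's estimate. The constant $c^\ast_R$ is chosen smaller than $c^\ast_{R-r}/4$ so that the hypothesis applies on the whole disc. This yields
\[
|d_{R-r}^{(2r+1)}(E+\i s)|\leqslant C\alpha^{R-r+1}s^{-2R+2r-2-2r-1}(1+s)^{-1}.
\]
Multiplied by $\alpha^r$, this is $C\alpha^{R+1}s^{-2R-3}(1+s)^{-1}$. Altogether,
\[
|L[d_R](E+\i s)|\leqslant C_R\alpha^{R+1}s^{-2R-3}(1+s)^{-1}.
\]

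Finally I integrate along the vertical line exactly as in \eqref{eq:bounddiff}. Both $m_N$ and $\Phi_R$ are Stieltjes transforms of finite (signed) measures, by Lemma~\ref{lem:thetastruct}. Hence $\e^{z_s^2/2}d_R(z_s)\to0$ as $s\to\infty$, which legitimates the representation. It yields
\[
|d_R(E+\i\eta)|\leqslant\int_\eta^\infty \e^{(\eta^2-s^2)/2}|L[d_R](E+\i s)|\,\d s .
\]
For $\eta\leqslant1$, I drop the Gaussian factor and integrate $s^{-2R-3}$ to obtain $C\alpha^{R+1}\eta^{-2R-2}$. For $\eta\geqslant1$, I split the integral into $s\in[\eta,2\eta]$ and $s\geqslant 2\eta$. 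On the first piece the integrand is at most $\eta^{-2R-4}$, and I use $\int_\eta^\infty \e^{(\eta^2-s^2)/2}\d s\leqslant \eta^{-1}$. The second piece is exponentially small. This gives the claimed $\eta^{-2R-2}(1+\eta)^{-1}$; the extra factor is needed to close the induction at large heights, which is why the statement carries it.

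The main obstacle is keeping the large-$\eta$ decay consistent through the induction. The Cauchy estimates need the hypothesis on discs that reach down to height $s/2$, and the bound obtained at order $R$ must retain the factor $(1+\eta)^{-1}$ so it can be fed back. I would check carefully that the a priori bound $\Im m_N\leqslant\min(C,s^{-1})$ supplies this factor in $\Escr_R$. I would also check that no bootstrap on $S_\eta$ is needed, because Lemma~\ref{lem:apriori} already provides it once $c_R^\ast\leqslant c$.
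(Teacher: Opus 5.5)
Your proposal is correct and follows essentially the paper's argument: induction on $R$ with the base case from Lemmas~\ref{lem:expect} and~\ref{lem:largeeta}, an ODE for the difference whose derivative terms are controlled by Cauchy estimates on discs of radius $s/2$ via the lower-order bound, and vertical integration as in \eqref{eq:bounddiff}. The only difference is cosmetic: the paper writes $L[m_N-\Phi_{R+1}]$ through derivatives of $m_N-\Phi_{R+1}$ itself plus truncation terms $\Ycal_{R+1}$, whereas you use the lower differences $m_N-\Phi_{R-r}$, in which case the identity is exact without your extra $\Theta_s^{(2r+1)}$ sum, since $L[\Phi_R]+1=\sum_{r=1}^R\frac{c_r\alpha^r}{(2r+1)!}\Phi_{R-r}^{(2r+1)}$; keeping that sum is harmless because you only bound it.
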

\begin{proof}
We omit the $\tau$-dependence and write $\alpha=\alpha_{N,q}$.
We argue by induction on $R$. The case $R=0$ follows from
Lemma~\ref{lem:expect} for $\eta\leqslant1$ and
Lemma~\ref{lem:largeeta} for $\eta\geqslant1$, after adjusting the constants.
Assume the statement at order $R$, and set
$d=m_N-\Phi_{R+1}$.
We choose $c_{R+1}^\ast\leqslant\min(c_R^\ast/4,1)$ sufficiently small
that Lemma~\ref{lem:apriori} also applies whenever $\eta\leqslant1$.

Since $\Phi_{R+1}=\Phi_R+\alpha^{R+1}\Theta_{R+1}$, the induction
hypothesis and \eqref{eq:thetabounds} give
\begin{equation}
\label{eq:dcrude}
    |d(w)|\leqslant
    \frac{C_R\alpha^{R+1}}{(\Im w)^{2R+2}(1+\Im w)}
    \qquad\text{for }\Im w\geqslant\eta/2.
\end{equation}
Indeed, the induction hypothesis applies at these heights because
$\alpha\leqslant(c_R^\ast/4)\eta^2\leqslant c_R^\ast(\Im w)^2$;
the added profile is uniformly bounded at heights at most one and
decays as $(\Im w)^{-2R-5}$ at larger heights.
Cauchy's integral formula on the disc of radius $s/2$ centred at $E+\i s$
therefore yields, for $s\geqslant\eta$ and $1\leqslant r\leqslant R+1$,
\[
    |d^{(2r+1)}(E+\i s)|
    \leqslant
    \frac{C_R\alpha^{R+1}}{s^{2R+2r+3}(1+s)}.
\]
In particular, since $\alpha/s^2\leqslant1$,
\begin{equation}
\label{eq:dderivsource}
    \sum_{r=1}^{R+1}\frac{|c_r|\alpha^r}{(2r+1)!}
    |d^{(2r+1)}(E+\i s)|
    \leqslant
    \frac{C_R\alpha^{R+2}}{s^{2R+5}(1+s)}.
\end{equation}

We next write the equation satisfied by $d$.
Summing the recursion \eqref{eq:thetarec} and using $L[m]=-1$, we obtain
\[
    L[\Phi_{R+1}]+1
    =\sum_{r=1}^{R+1}\frac{c_r\alpha^r}{(2r+1)!}
    \Phi_{R+1}^{(2r+1)}-\Ycal_{R+1},
\]
where the terms beyond the truncation order are
\[
    \Ycal_{R+1}
    \coloneqq
    \sum_{\substack{1\leqslant r\leqslant R+1,\ 0\leqslant t\leqslant R+1\\
                    r+t\geqslant R+2}}
    \frac{c_r\alpha^{r+t}}{(2r+1)!}\Theta_t^{(2r+1)}.
\]
Subtracting this identity from \eqref{eq:highode} at order $R+1$ gives
\begin{equation}
\label{eq:dRode}
    d'(z)+zd(z)
    =\sum_{r=1}^{R+1}\frac{c_r\alpha^r}{(2r+1)!}d^{(2r+1)}(z)
    +\Ycal_{R+1}(z)+\Escr_{R+1}(z).
\end{equation}
Every summand of $\Ycal_{R+1}$ has $t\geqslant1$ and $r+t\geqslant R+2$.
By \eqref{eq:thetabounds}, its derivative factor is uniformly bounded
for $s\leqslant1$ and bounded by $C_Rs^{-2t-2r-4}$ for $s\geqslant1$.
Since $\alpha\leqslant1$, this gives
\[
    |\Ycal_{R+1}(E+\i s)|
    \leqslant\frac{C_R\alpha^{R+2}}{(1+s)^{2R+6}}
    \leqslant\frac{C_R\alpha^{R+2}}{s^{2R+5}(1+s)}.
\]
Moreover, Lemma~\ref{lem:apriori} for $s\leqslant1$ and the bound
$\Im m_N(E+\i s)\leqslant s^{-1}$ for $s\geqslant1$ imply
\[
    \Im m_N(E+\i s)\leqslant\frac{C}{1+s}
    \qquad\text{for }s\geqslant\eta.
\]
Thus \eqref{eq:highode} gives
\[
    |\Escr_{R+1}(E+\i s)|
    \leqslant\frac{C_R\alpha^{R+2}}{s^{2R+5}(1+s)}.
\]
Combining these bounds with \eqref{eq:dderivsource} controls the
right-hand side of \eqref{eq:dRode} by the same expression.
Finally, $d(E+\i s)\to0$ as $s\to\infty$, so integration along the
vertical line, as in \eqref{eq:bounddiff}, yields
\[
    |d(E+\i\eta)|
    \leqslant C_R\alpha^{R+2}
    \int_\eta^\infty
    \frac{\e^{(\eta^2-s^2)/2}}{s^{2R+5}(1+s)}\d s\leqslant\frac{C_R\alpha^{R+2}}{1+\eta}
    \int_\eta^\infty s^{-2R-5}\d s
    \leqslant\frac{C_R\alpha^{R+2}}{\eta^{2R+4}(1+\eta)}.
\]
This proves \eqref{eq:higherstab} at order $R+1$.
\end{proof}

We now prove concentration of the Stieltjes transform through the following lemma
\begin{lemma}\label{lem:concent}
    There exist absolute constants $c,C>0$ such that, for every $E\in\Rbb$, $\eta\in(0,1]$, $t>0$, and $\tau\in\{\text{Full},+,-\}$,
    \begin{equation}
    \label{eq:concent}
    \Pds\left(
        \vert s_{N,\tau}(E+\i\eta)-m_{N,\tau}(E+\i\eta)\vert \geqslant t
    \right)\leqslant C\exp\left(-c\,M\eta^{3}\min\left(\frac{t^2}{\Im m_{N,\tau}(E+\i\eta)},\,t\right)\right).
    \end{equation}
    In particular, if $\Im m_{N,\tau}(E+\i\eta)\leqslant C_0$ for some $C_0\geqslant1$, then for every $0<t\leqslant1$
    \begin{equation}
    \label{eq:concentsimple}
    \Pds\left(
        \vert s_{N,\tau}(E+\i\eta)-m_{N,\tau}(E+\i\eta)\vert \geqslant t
    \right)\leqslant C\exp\left(-\frac{c}{C_0}\,M\eta^{3}t^2\right).
    \end{equation}
\end{lemma}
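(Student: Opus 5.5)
The plan is to regard $s_{N,\tau}(z)$ as a function of the Gaussian vector $J=(J_A)_{\vert A\vert=q}\in\Rbb^M$ and to combine a gradient estimate with Gaussian concentration. The target is the mixed sub-Gaussian/sub-exponential form $\min(t^2/\Im m,t)$, and this form suggests a \emph{local} Lipschitz bound in which the squared gradient is controlled by $\Im s_{N,\tau}$ itself.

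\textbf{Gradient bound.} Since $\partial_{J_A}G=-M^{-1/2}G\Gamma_AG$, we have $\partial_{J_A}s=-M^{-1/2}\tr_\tau(\Gamma_AG^2)$. By Cauchy--Schwarz with the normalized trace and $\Vert\Gamma_A\Vert=1$,
\[
\vert\tr_\tau(\Gamma_AG^2)\vert^2\leqslant \tr_\tau(\vert G\vert^2)\,\tr_\tau(\vert G\vert^2)\leqslant\Big(\frac{\Im s}{\eta}\Big)^2,
\]
which is just the Ward identity \eqref{eq:boundG4} with $k=2$. Summing over the $M$ indices is wasteful, so I would use the sharper identity
\[
\sum_A\vert\tr_\tau(\Gamma_AX)\vert^2\leqslant C\,\frac{M}{D^\tau}\,\tr_\tau(\vert X\vert^2)\ ?
\]
That bound is not obviously available. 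The safe route instead gives $\vert\nabla s\vert^2\leqslant M\cdot M^{-1}(\Im s/\eta)^2=(\Im s)^2\eta^{-2}$, and this yields only the rate $\exp(-ct^2\eta^2)$, not $M\eta^3$. So the naive bound loses the crucial factor $M$.

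To recover it, I would treat the operators $\Gamma_A$ (rescaled by $\sqrt{D}$) as orthonormal in Hilbert--Schmidt. The family $\{\Gamma_A\}$ over all subsets forms an orthonormal basis for $\tr(X^*Y)$, so Bessel's inequality gives $\sum_A\vert\tr(\Gamma_AX)\vert^2\leqslant\tr(\vert X\vert^2)$. Hence
\[
\vert\nabla s\vert^2\leqslant\frac{1}{M}\,\tr_\tau(\vert G\vert^4)\leqslant\frac{\Im s}{M\eta^3},
\]
again by \eqref{eq:boundG4} with $k=4$. In a parity sector I would use the same Bessel argument with $\Pi^\tau$ inserted, which costs at most a factor $2$. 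This produces exactly $M\eta^3$ and $\Im s$.

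\textbf{Concentration.} I would then apply a Herbst/log-Sobolev argument to the self-bounded function. Set $f=\Im s$ (or handle real and imaginary parts separately); we have $\vert\nabla s\vert^2\leqslant \Im s/(M\eta^3)$, and $\Im s$ is itself a function of $J$ with a gradient bounded in the same way. The standard route is to show first that $g=\sqrt{\Im s}$ is Lipschitz with constant $(4M\eta^3)^{-1/2}$, since $\vert\nabla g\vert=\vert\nabla\Im s\vert/(2\sqrt{\Im s})\leqslant (M\eta^3)^{-1/2}/2$. Gaussian concentration for $g$ then gives $\Im s\leqslant 2\Im m+u$ with probability at least $1-C\exp(-cM\eta^3u)$, after using that $\E g\leqslant\sqrt{\Im m}$. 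On that event, $s$ is Lipschitz with constant $((2\Im m+u)/(M\eta^3))^{1/2}$ on the relevant region. To make this rigorous I would apply Gaussian concentration to the truncated Lipschitz function obtained by composing with the cutoff $\Im s\wedge(2\Im m+u)$, or equivalently run a Herbst argument with $\E\vert\nabla s\vert^2 e^{\lambda s}$ controlled. Choosing $u=\max(\Im m,t)$ yields the rate $\exp(-cM\eta^3\min(t^2/\Im m,t))$. The final step is to compare the median or truncated mean with $m_N$, which is routine and absorbed into the constants.

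The main obstacle is the Bessel step in the parity sectors together with the self-bounding concentration step. The second display \eqref{eq:concentsimple} then follows immediately: for $t\leqslant1\leqslant C_0$ we have $\min(t^2/C_0,t)=t^2/C_0$.
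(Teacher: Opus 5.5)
Your gradient bound is the paper's: Bessel's inequality for the Hilbert--Schmidt-orthonormal family $(\Gamma_A)_A$, then \eqref{eq:boundG4} with $k=4$. Your factor-$2$ treatment of the parity sectors is fine, and the $(2\sqrt M\eta^{3/2})^{-1}$-Lipschitz bound for $\sqrt{\Im s_{N,\tau}}$ also matches the paper.

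\textbf{The gap.} What is missing is the step from the self-bounded gradient $\vert\nabla s\vert^2\leqslant \Im s/(M\eta^3)$ to concentration of $s$ itself, specifically of $\Re s$. Composing with the cutoff $\Im s\wedge c$ gives a globally $\sqrt{c/(M\eta^3)}$-Lipschitz function only for $\Im s$; there is no analogous truncation of $\Re s$. A gradient bound on the sublevel set $\{\Im s\leqslant c\}$ does not give a Euclidean Lipschitz bound on that set. The set is not convex: for $E=0$ it contains $\pm J$ for large $J$ but not $J=0$. On a segment joining two of its points $\Im s$ can be large, and $\Re s$ is not controlled pointwise by $\Im s$. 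So you cannot simply extend $s$ from the good event to a globally Lipschitz function.

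Your alternative Herbst route can be made to work, but its essential step is absent. Log-Sobolev gives $\mathrm{Ent}(e^{\lambda\Re s})\leqslant\frac{\lambda^2}{2M\eta^3}\Eds[\Im s\,e^{\lambda\Re s}]$, and the mixed term must be decoupled. One way is entropy duality, $\theta\,\Eds[\Im s\,e^{\lambda\Re s}]\leqslant\mathrm{Ent}(e^{\lambda\Re s})+\Eds[e^{\lambda\Re s}]\log\Eds[e^{\theta\Im s}]$, with $\theta=\lambda^2/(M\eta^3)$. Combine this with the self-bounded Herbst estimate $\log\Eds[e^{\theta\Im s}]\leqslant2\theta\,\Im m_{N,\tau}$ for $0<\theta\leqslant M\eta^3$. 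This yields $\log\Eds[e^{\lambda(\Re s-\Re m_{N,\tau})}]\leqslant C\lambda^2\Im m_{N,\tau}/(M\eta^3)$ for $\vert\lambda\vert\leqslant M\eta^3$, and Chernoff then gives \eqref{eq:concent}.

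\textbf{How the paper does it.} The paper avoids both truncation and decoupling by working with moments. It applies the Gaussian $L^p$ Poincar\'e inequality $\Vert F-\Eds F\Vert_p\leqslant C\sqrt p\,\Vert\,\vert\nabla F\vert\,\Vert_p$ to $\Re s$ and $\Im s$ separately. It then uses $\vert\nabla s\vert\leqslant(M\eta^3)^{-1/2}\sqrt{\Im s}$ together with $\Vert\sqrt{\Im s}\Vert_p\leqslant\sqrt{\Im m_{N,\tau}}+C\sqrt p\,(M\eta^3)^{-1/2}$. That moment bound comes from the Lipschitz concentration of $\sqrt{\Im s}$, which you already have. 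The result is $\Vert s-m_{N,\tau}\Vert_p\leqslant C\sqrt{p\,\Im m_{N,\tau}/(M\eta^3)}+Cp/(M\eta^3)$. Markov's inequality with $p\asymp M\eta^3\min(t^2/\Im m_{N,\tau},t)$ then gives \eqref{eq:concent}. Because this is centred at $m_{N,\tau}$ from the start, no median-to-mean comparison is needed.
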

\begin{proof}
    We omit the $\tau$-dependence and write $z=E+\i\eta$. We first note that we have
    \[
    \partial_{J_A}s_N(z) = -\frac{1}{\sqrt{M}}\tr(G(z)\Gamma_AG(z)) = -\frac{1}{\sqrt{M}}\tr(\Gamma_AG(z)^2).
    \]
    Using orthonormality of the family $(\Gamma_A)_A$ for the normalized Hilbert--Schmidt inner product, Bessel's inequality  and the Ward identity \eqref{eq:boundG4} give
    \begin{equation}
    \label{eq:gradsN}
    \sum_{\substack{A\subset\unn{1}{N}\\ \vert A\vert=q}}
    \vert \partial_{J_A}s_N(z)\vert^2 \leqslant \frac{1}{M}\sum_{\substack{A\subset\unn{1}{N}\\\vert A\vert=q}}\left\vert \tr(\Gamma_AG(z)^2)\right\vert^2\leqslant\frac{1}{M}\tr(\vert G(z)\vert^4)\leqslant \frac{\Im s_N(z)}{M\eta^3}.
    \end{equation}
    %\emph{Step 1: concentration of $\Im s_N(z)$.} 
    The function $J\mapsto\Im s_N(z)$ is smooth and positive, and $\vert\partial_{J_A}\Im s_N(z)\vert\leqslant\vert\partial_{J_A}s_N(z)\vert$, so that \eqref{eq:gradsN} gives
    \[
    \left\vert\nabla\sqrt{\Im s_N(z)}\right\vert^2=\frac{\vert\nabla\Im s_N(z)\vert^2}{4\,\Im s_N(z)}\leqslant\frac{1}{4M\eta^3}.
    \]
    Hence $\sqrt{\Im s_N(z)}$ is a Lipschitz function of the Gaussian vector $(J_A)_A$, with constant $(2\sqrt M\eta^{\frac32})^{-1}$, and Gaussian concentration together with Jensen's inequality $\Eds\sqrt{\Im s_N(z)}\leqslant\sqrt{\Im m_N(z)}$ gives
    \[
    \Pds\left(\sqrt{\Im s_N(z)}\geqslant\sqrt{\Im m_N(z)}+u\right)\leqslant\exp\left(-2M\eta^3u^2\right),\qquad u>0.
    \]
    Integrating this tail bound, we obtain for every $p\geqslant1$
    \begin{equation}
    \label{eq:sqrtmoments}
    \left\Vert\sqrt{\Im s_N(z)}\right\Vert_p\leqslant\sqrt{\Im m_N(z)}+\left\Vert\Big(\sqrt{\Im s_N(z)}-\sqrt{\Im m_N(z)}\Big)_+\right\Vert_p\leqslant\sqrt{\Im m_N(z)}+\frac{C\sqrt p}{\sqrt M\eta^{\frac32}},
    \end{equation}
    where $\Vert X\Vert_p\coloneqq(\Eds\vert X\vert^p)^{\frac1p}$.
    Applying the Gaussian $L^p$ Poincar\'e inequality to $\Re s_N(z)$ and $\Im s_N(z)$ separately, for every $p\geqslant2$ we get, by \eqref{eq:gradsN} and \eqref{eq:sqrtmoments},
    \begin{equation}
    \label{eq:sNmoments}
    \left\Vert s_N(z)-m_N(z)\right\Vert_p
    \leqslant C\sqrt p\,\Bigg\Vert\Big(\sum_{\vert A\vert=q}\vert\partial_{J_A}s_N(z)\vert^2\Big)^{\frac12}\Bigg\Vert_p
    \leqslant\frac{C\sqrt p}{\sqrt M\eta^{\frac32}}\left\Vert\sqrt{\Im s_N(z)}\right\Vert_p
    \leqslant\frac{C\sqrt{p\,\Im m_N(z)}}{\sqrt M\eta^{\frac32}}+\frac{Cp}{M\eta^3}.
    \end{equation}

    Fix $t>0$ and set $p\coloneqq c\,M\eta^3\min\big(t^2/\Im m_N(z),t\big)$ with $c>0$ small enough. If $p<2$ the claim \eqref{eq:concent} is trivial provided $C$ is large enough. If $p\geqslant2$, the two terms in the right-hand side of \eqref{eq:sNmoments} are respectively at most $C\sqrt c\,t$ and $Cc\,t$, and thus both at most $\frac{t}{2\e}$ for $c$ small enough, so that Markov's inequality gives
    \[
    \Pds\left(\vert s_N(z)-m_N(z)\vert\geqslant t\right)\leqslant t^{-p}\left\Vert s_N(z)-m_N(z)\right\Vert_p^p\leqslant\e^{-p},
    \]
    which is \eqref{eq:concent}. Finally, if $\Im m_N(z)\leqslant C_0$ with $C_0\geqslant1$ and $t\leqslant1$, then $\min(t^2/\Im m_N(z),t)\geqslant\min(t^2/C_0,t)=t^2/C_0$, which gives \eqref{eq:concentsimple}.
\end{proof}
We are now ready to prove the local law.

\begin{proof}[Proof of Theorem~\ref{theo:locallaw}]
We omit the $\tau$-dependence. Since
\[
    \frac{\alpha_{N,q}}{\eta^2}
    \leqslant(\log N)^{-2\varepsilon}
    \qquad\text{on }\Dcal_{L,\varepsilon},
\]
Proposition~\ref{pro:higherstab} and Lemma~\ref{lem:apriori} give,
for $N$ sufficiently large,
\[
    |m_N(z)-\Phi_R(z)|
    \leqslant\frac{C_R\alpha_{N,q}^{R+1}}{\eta^{2R+2}},
    \qquad
    \Im m_N(z)\leqslant C.
\]
Combining these estimates with \eqref{eq:concentsimple} and the triangle
inequality proves \eqref{eq:llawave}.

To obtain the uniform estimate, let $\Pcal_N$ be an $M^{-1}$-net of
$\Dcal_{L,\varepsilon}$ with $|\Pcal_N|\leqslant C_LM^2$.
Apply \eqref{eq:concentsimple} at each $z=E+\i\eta\in\Pcal_N$ with
\[
    t(z)=A\frac{\sqrt{\log M}}{\sqrt M\,\eta^{3/2}},
\]
where $A$ is a sufficiently large absolute constant.
This choice satisfies $t(z)\leqslant1$ throughout the domain for $N$
sufficiently large. Indeed, writing
$\eta_*=(\log N)^\varepsilon\alpha_{N,q}^{1/2}$, we have
\[
    \frac{M\eta_*^3}{\log M}\longrightarrow\infty,
\]
using $\alpha_{N,q}\geqslant cN^{-1}$ and $M\geqslant\binom N2$.
A union bound therefore shows that, with probability at least $1-M^{-10}$,
\[
    |s_N(z)-m_N(z)|
    \leqslant A\frac{\sqrt{\log M}}{\sqrt M\,\eta^{3/2}}
    \qquad\text{for every }z\in\Pcal_N.
\]

To extend this estimate to the whole domain, use
\[
    |s_N'(z)|+|m_N'(z)|\leqslant2\eta^{-2}.
\]
For any $z=E+\i\eta\in\Dcal_{L,\varepsilon}$, choose $z_0\in\Pcal_N$
with $|z-z_0|\leqslant M^{-1}$.
Since $M^{-1}=o(\eta_*)$, the imaginary parts of $z$ and $z_0$ are
comparable, and
\[
    |(s_N-m_N)(z)-(s_N-m_N)(z_0)|
    \leqslant\frac{C}{M\eta^2}
    \leqslant C\frac{\sqrt{\log M}}{\sqrt M\,\eta^{3/2}}.
\]
Thus, on the same event,
\[
    |s_N(z)-m_N(z)|
    \leqslant C\frac{\sqrt{\log M}}{\sqrt M\,\eta^{3/2}}
    \qquad\text{throughout }\Dcal_{L,\varepsilon}.
\]
Adding the deterministic bound on $m_N-\Phi_R$ proves
\eqref{eq:llawuniform}.
\end{proof}

We now aim to prove the asymptotic of the spectral form factor in Corollary~\ref{cor:sff}. We start by using our expected local law to control the Fourier transform of the expected empirical spectral measure.

\begin{lemma}
\label{lem:sff1}
Let $\varepsilon_0>0$, let $q$ be even with $2\leqslant q\leqslant N^{\frac{1}{2}-\varepsilon_0}$, and let $R\geqslant0$ be an integer.
For $\tau\in\{\text{Full},+,-\}$, define
\[
\phi_{N,\tau}(t)
=
\Eds\left[
\tr_\tau(\e^{-\i tH_q})
\right].
\]
There exist constants $c_R,C_R>0$ such that, if $T\geqslant1$ satisfies $\alpha_{N,q}T^2\leqslant c_R$, then
\begin{equation}
\label{eq:sffphi}
\sup_{|t|\leqslant T}
\left|
\phi_{N,\tau}(t)-\widehat{\varrho_R}(t)
\right|
\leqslant
C_R\left(
\alpha_{N,q}^{R+1}T^{2R+2}
\right)^{\frac{4}{5}},
\end{equation}
with $\widehat{\varrho_R}$ as in \eqref{eq:hatvarrho}.
\end{lemma}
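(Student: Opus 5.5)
Write $\mu_N^\tau\coloneqq\Eds[\frac{1}{D^\tau}\sum_i\delta_{\lambda_i^\tau}]$, so that $\phi_{N,\tau}=\widehat{\mu_N^\tau}$, and $\delta\coloneqq\mu^\tau_N-\varrho_R\,\d x$, a signed measure of total mass zero. The plan is to recover the Fourier transform of $\delta$ from its Stieltjes transform at height $\eta$ via the Poisson kernel. The key identity is
\[
\frac1\pi\Im\big(m_{N,\tau}-\Phi_R\big)(E+\i\eta)=(\delta*P_\eta)(E),\qquad P_\eta(x)=\frac{\eta}{\pi(x^2+\eta^2)},
\]
and $\widehat{P_\eta}(t)=\e^{-\eta|t|}$. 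Hence
\[
\widehat\delta(t)\,\e^{-\eta|t|}=\frac1\pi\int_\Rbb \e^{-\i tE}\,\Im(m_{N,\tau}-\Phi_R)(E+\i\eta)\,\d E .
\]
We then multiply by $\e^{\eta|t|}\leqslant \e^{\eta T}$. Choosing $\eta\lesssim T^{-1}$ keeps this factor bounded.

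We have to bound the $E$-integral, and the bound must be uniform over all of $\Rbb$. Proposition~\ref{pro:higherstab} is exactly this: it holds for every $E\in\Rbb$ (this is the uniformity remark after Theorem~\ref{theo:locallaw}), giving the pointwise estimate $C_R\alpha^{R+1}\eta^{-2R-2}$. This alone is not integrable in $E$, so we split $|E|\leqslant K$ from $|E|>K$. On $|E|\leqslant K$ the integral is at most $2KC_R\alpha^{R+1}\eta^{-2R-2}$. On $|E|>K$ we bound $\Im m_{N,\tau}$ and $\Im\Phi_R$ separately by tail mass plus a Poisson tail. For $\mu_N^\tau$, the trivial moment bound $\mu_4\leqslant 3$ gives mass outside $[-K/2,K/2]$ at most $CK^{-4}$. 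The Poisson contribution from mass inside, integrated over $|E|>K$, is at most $C\eta/K$. For $\varrho_R$ the tails are Gaussian.

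This yields
\[
|\widehat\delta(t)|\leqslant C_R\big(K\alpha^{R+1}\eta^{-2R-2}+\eta K^{-1}+K^{-4}\big)\qquad\text{for }|t|\leqslant T,\ \eta\leqslant T^{-1}.
\]
We need $\alpha\leqslant c_R^*\eta^2$, which follows from $\alpha T^2\leqslant c_R$ and $\eta\asymp T^{-1}$. Take $\eta=T^{-1}$ and write $x=\alpha^{R+1}T^{2R+2}$. The two $K$-terms to balance are $Kx$ and $K^{-4}$ (the term $\eta/K$ is then smaller). Setting $K=x^{-1/5}$ gives $x^{4/5}$, which is exactly the claimed rate $(\alpha^{R+1}T^{2R+2})^{4/5}$ and explains the exponent $\frac45$.

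The main obstacle is the tail part. The crude Poisson tail $\eta/K$ gives $T^{-1}x^{1/5}$, which need not be below $x^{4/5}$ when $x$ is tiny. I would handle this in one of two ways. One option is to use the mass-zero property of $\delta$ together with higher moments to improve the Poisson tail to $\eta K^{-p}$. The better option is to avoid the Poisson kernel's heavy tails entirely: replace $P_\eta$ by a smoothed cutoff $\chi(E/K)$, multiply inside the integral, and control the commutator through the decay $\e^{-\eta|t|}$. Either way, the ingredients remain Proposition~\ref{pro:higherstab} uniformly in $E$, the moment bound on the tails of $\mu_N^\tau$ and of $\varrho_R$, and the optimisation in $K$, with the fourth-moment tail producing the exponent $\frac45$.
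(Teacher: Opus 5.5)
Your skeleton matches the paper's: Poisson smoothing at height $\eta=T^{-1}$, the uniform-in-$E$ bound of Proposition~\ref{pro:higherstab} on $|E|\leqslant K$, a $K^{-4}$ tail from the fourth moment, and $K=x^{-1/5}$ with $x=\alpha_{N,q}^{R+1}T^{2R+2}$ as in your notation. However, the step you flag as the main obstacle is where the proof actually happens, and you leave it unresolved. Estimating $\Im m_{N,\tau}$ and $\Im\Phi_R$ separately on $|E|>K$ discards all cancellation and leaves the term $\eta K^{-1}=T^{-1}x^{1/5}$. For $T\asymp1$ this is much larger than $x^{4/5}$; optimizing in $K$ you would only get $x^{1/2}$ there.

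Your first remedy goes in the right direction but misidentifies the input. Mass zero of $\delta$ alone improves the tail only to $\eta K^{-2}$. Adding the vanishing of odd moments (symmetry $H_q\mapsto-H_q$) only gets you to $\eta K^{-3}$. Neither is $\lesssim K^{-4}$ when $T\asymp1$, i.e.\ $\eta\asymp1\ll K$. Nor can you reach $\eta K^{-p}$ for arbitrary $p$: for $R=0$ the fourth moments $3-2\alpha_{N,q}$ of $\overline{\mu}_N$ and $3$ of $\rho_G$ differ. Your second remedy (smoothed cutoff and ``commutator'') is too vague to check. Any argument based on $\Im(m_{N,\tau}-\Phi_R)$ at height $\eta\asymp T^{-1}$ must confront the fact that, without further cancellation, $(\delta*P_\eta)(E)$ decays only like $\eta\,\vert\int y^2\,\delta(\d y)\vert\,\vert E\vert^{-4}$.

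The missing idea is that your $\delta$ has vanishing moments of orders $0,1,2,3$:
\begin{itemize}
\item order $0$ because $\varrho_R$ has mass one (Lemma~\ref{lem:thetastruct});
\item the odd orders by the symmetry of $H_q$ and the evenness of $\varrho_R$;
\item crucially, order $2$: $\Eds[\tr_\tau(H_q^2)]=1$, while $\varrho_R-\rho_G$ is a combination of $\rho_G^{(k)}$ with $k\geqslant4$ even, and $\int y^j\rho_G^{(k)}(y)\,\d y=0$ for $j<k$.
\end{itemize}
You also need $\int|y|^4\,|\delta|(\d y)\leqslant C_R$, which follows from $\mu_4=3-2\alpha_{N,q}$.

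With these inputs, write $(\delta*P_\eta)(E)=\int\mathcal{E}_\eta(E,y)\,\delta(\d y)$, where $\mathcal{E}_\eta(E,\cdot)$ is the third-order Taylor remainder of $y\mapsto P_\eta(E-y)$ at $y=0$. Then use $|P_\eta^{(k)}(E)|\leqslant C_k\eta|E|^{-k-2}$:
\begin{itemize}
\item for $|y|\leqslant K/2$, the remainder integrated over $|E|>K$ is $O(\eta|y|^4K^{-5})$;
\item for $|y|>K/2$, bound $\mathcal{E}_\eta$ term by term (including the subtracted Taylor polynomial) and use the fourth moment.
\end{itemize}
This gives $\int_{|E|>K}|\delta*P_\eta|\,\d E\leqslant C_RK^{-4}$ uniformly in $\eta\leqslant1$. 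Only with this tail bound does balancing $K\alpha_{N,q}^{R+1}\eta^{-2R-2}$ against $K^{-4}$ at $\eta=T^{-1}$ produce the rate $x^{4/5}$.
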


\begin{proof}
For $\tau\in\{\text{Full},+,-\}$, define
\[
\mu_{N,\tau}
=
\frac{1}{D^\tau}
\sum_{j=1}^{D^\tau}\delta_{\lambda_j^\tau},
\qquad
\overline{\mu}_{N,\tau}
=
\Eds[\mu_{N,\tau}].
\]
We suppress the dependence on $\tau$ in the remainder of the proof.
For $\eta\in(0,1]$ set
\[
\rho_{N,\eta}=P_\eta\ast\overline{\mu}_N,
\qquad\quad
\varrho_{R,\eta}=P_\eta\ast\varrho_R,
\]
where $P_\eta$ is the Poisson kernel and $\varrho_R$ is the profile density of \eqref{eq:defvarrhoR}. By Stieltjes inversion and Lemma~\ref{lem:thetastruct},
\[
\rho_{N,\eta}(E)
=
\frac{1}{\pi}\Im m_N(E+\i\eta),
\qquad\quad
\varrho_{R,\eta}(E)
=
\frac{1}{\pi}\Im \Phi_R(E+\i\eta).
\]
Under the Fourier convention
\[
\widehat{f}(t)
=
\int_\Rbb \e^{-\i tE}f(E)\d E
\quad\text{we have}\quad
\widehat{P}_\eta(t)=\e^{-\eta|t|},
\quad
\widehat{\rho}_{N,\eta}(t)
=
\e^{-\eta|t|}\phi_N(t),
\quad\text{and}\quad
\widehat{\varrho}_{R,\eta}(t)
=
\e^{-\eta|t|}\widehat{\varrho_R}(t),
\]
so that
\begin{equation}
\label{eq:fourierlone}
\left|
\phi_N(t)-\widehat{\varrho_R}(t)
\right|
\leqslant
\e^{\eta|t|}
\Vert\rho_{N,\eta}-\varrho_{R,\eta}\Vert_1.
\end{equation}

We first control this $L^1$-norm on a bounded interval. Provided $\alpha_{N,q}\leqslant c^\ast_R\eta^2$, Proposition~\ref{pro:higherstab} gives
\[
\left|
\rho_{N,\eta}(E)-\varrho_{R,\eta}(E)
\right|
=
\frac{1}{\pi}\left|\Im\left(m_N(E+\i\eta)-\Phi_R(E+\i\eta)\right)\right|
\leqslant
A_\eta
\coloneqq
\frac{C_R\,\alpha_{N,q}^{R+1}}{\eta^{2R+2}},
\]
uniformly in $E\in\Rbb$. Consequently, for every $K\geqslant2$,
\begin{equation}
\label{eq:sffbulk}
\int_{-K}^K
\left|
\rho_{N,\eta}(E)-\varrho_{R,\eta}(E)
\right|
\d E
\leqslant
2KA_\eta.
\end{equation}
We next obtain an estimate on the complement of $[-K,K]$ by exploiting moment cancellation. Set $\nu_N=\overline{\mu}_N-\varrho_R\,\d x$. Since $\overline\mu_N$ is a probability measure and $\varrho_R$ has total mass one by Lemma~\ref{lem:thetastruct}, we have $\nu_N(\Rbb)=0$.
The law of $H_q$ is invariant under $H_q\mapsto-H_q$, and hence the odd moments of $\overline{\mu}_N$ vanish. Moreover,
\[
\int_\Rbb x^2\overline{\mu}_N(\d x)
=
\Eds\left[\tr(H_q^2)\right]
=
1.
\]
On the other hand $\varrho_R-\rho_G$ is, by \eqref{eq:defvarrhoR} and Lemma~\ref{lem:thetastruct}, a linear combination of derivatives $\rho_G^{(k)}$ with $k$ even and $k\geqslant4$, and $\int_\Rbb x^j\rho_G^{(k)}(x)\d x=0$ whenever $0\leqslant j<k$. Since $\rho_G$ is even with unit mass and unit variance, we conclude that
\begin{equation}
\label{eq:momentcancel}
\int_\Rbb x^k\nu_N(\d x)=0,
\qquad
k=0,1,2,3.
\end{equation}
We also need a uniform fourth-moment bound. Wick's formula gives
\[
\begin{aligned}
\int_\Rbb x^4\overline{\mu}_N(\d x)
&=
\Eds\left[\tr(H_q^4)\right]=
2+
\frac{1}{M^2}
\sum_{\substack{|A|=q\\|B|=q}}
\tr\left(
\Gamma_A\Gamma_B\Gamma_A\Gamma_B
\right).
\end{aligned}
\]
For fixed $A$, the identity
\[
\Gamma_A\Gamma_B\Gamma_A\Gamma_B
=
\begin{cases}
\Id,&\Gamma_A\Gamma_B=\Gamma_B\Gamma_A,\\
-\Id,&\Gamma_A\Gamma_B=-\Gamma_B\Gamma_A
\end{cases}
\]
and the definition of $\alpha_{N,q}$ give
\[
\frac{1}{M}
\sum_{|B|=q}
\tr\left(
\Gamma_A\Gamma_B\Gamma_A\Gamma_B
\right)
=
1-2\alpha_{N,q},
\]
so that
\begin{equation}
\label{eq:fourthmomentsyk}
\int_\Rbb x^4\overline{\mu}_N(\d x)
=
3-2\alpha_{N,q}
\leqslant3.
\end{equation}
Since $\int_\Rbb x^4\rho_G(x)\d x=3$ and $\int_\Rbb|x|^4\left\vert\varrho_R(x)-\rho_G(x)\right\vert\d x\leqslant C_R\alpha_{N,q}$ by \eqref{eq:defvarrhoR}, it follows that
\begin{equation}
\label{eq:fourthvariation}
\int_\Rbb |x|^4|\nu_N|(\d x)\leqslant C_R.
\end{equation}
We now estimate the tails of
$\rho_{N,\eta}-\varrho_{R,\eta}
=
P_\eta\ast\nu_N.$
For $E,x\in\Rbb$, define the third-order Taylor remainder
\[
\mathcal{E}_\eta(E,x)
=
P_\eta(E-x)
-
\sum_{k=0}^3
\frac{(-x)^k}{k!}P_\eta^{(k)}(E).
\]
By \eqref{eq:momentcancel}, we have
\begin{equation}
\label{eq:poissontaylor}
\rho_{N,\eta}(E)-\varrho_{R,\eta}(E)
=
\int_\Rbb
\mathcal{E}_\eta(E,x)\nu_N(\d x).
\end{equation}
For $0\leqslant k\leqslant4$, the derivatives of the Poisson kernel satisfy
\begin{equation}
\label{eq:poissonderivative}
\left|
P_\eta^{(k)}(E)
\right|
\leqslant
\frac{C_k\eta}{|E|^{k+2}},
\qquad
E\neq0.
\end{equation}
Indeed $P_\eta(E)=\eta^{-1}P_1(E/\eta)$, so that $P_\eta^{(k)}(E)=\eta^{-1-k}P_1^{(k)}(E/\eta)$, while $\vert P_1^{(k)}(v)\vert\leqslant C_k(1+\vert v\vert)^{-k-2}$.
Suppose first that $|x|\leqslant K/2$ and $|E|>K$. Taylor's formula and \eqref{eq:poissonderivative} give
\[
\left|
\mathcal{E}_\eta(E,x)
\right|
\leqslant
C|x|^4
\sup_{0\leqslant s\leqslant1}
\left|
P_\eta^{(4)}(E-sx)
\right|
\leqslant
\frac{C\eta|x|^4}{|E|^6}.
\]
Therefore,
\begin{equation}
\label{eq:smallxtail}
\int_{|E|>K}
\left|
\mathcal{E}_\eta(E,x)
\right|
\d E
\leqslant
\frac{C\eta|x|^4}{K^5}
\leqslant
\frac{C|x|^4}{K^4},
\end{equation}
where we used $\eta\leqslant1\leqslant K$. For $|x|>K/2$, we use directly the definition of
$\mathcal{E}_\eta$. Since $P_\eta$ is a probability density and
\eqref{eq:poissonderivative} gives
\[
\int_{|E|>K}
\left|
P_\eta^{(k)}(E)
\right|
\d E
\leqslant
\frac{C_k\eta}{K^{k+1}},
\]
we obtain
\[
\int_{|E|>K}
\left|
\mathcal{E}_\eta(E,x)
\right|
\d E
\leqslant
1+
C\sum_{k=0}^3
\frac{\eta|x|^k}{K^{k+1}}.
\]
Using \eqref{eq:fourthvariation}, we have, for $0\leqslant k\leqslant3$,
\[
\int_{|x|>K/2}
|x|^k|\nu_N|(\d x)
\leqslant
\left(\frac{2}{K}\right)^{4-k}
\int_\Rbb |x|^4|\nu_N|(\d x)
\leqslant
\frac{C_R}{K^{4-k}}.
\]
It follows that
\begin{equation}
\label{eq:largextail}
\int_{|x|>K/2}
\int_{|E|>K}
\left|
\mathcal{E}_\eta(E,x)
\right|
\d E\,|\nu_N|(\d x)
\leqslant
\frac{C_R}{K^4}.
\end{equation}
Combining \eqref{eq:poissontaylor}, \eqref{eq:smallxtail},
\eqref{eq:largextail}, and \eqref{eq:fourthvariation}, we conclude that
\begin{equation}
\label{eq:sfftailfourth}
\int_{|E|>K}
\left|
\rho_{N,\eta}(E)-\varrho_{R,\eta}(E)
\right|
\d E
\leqslant
\frac{C_R}{K^4},
\end{equation}
and therefore, together with \eqref{eq:sffbulk},
\begin{equation}
\label{eq:sfflonefourth}
\Vert\rho_{N,\eta}-\varrho_{R,\eta}\Vert_1
\leqslant
C_R\left(
KA_\eta+\frac{1}{K^4}
\right).
\end{equation}

We now take $\eta=\frac{1}{T}$, so that
\[
A_{\frac1T}=C_R\,\alpha_{N,q}^{R+1}T^{2R+2}=C_R\left(\alpha_{N,q}T^2\right)^{R+1}.
\]
Under our assumption $\alpha_{N,q}T^2\leqslant c_R$, after decreasing $c_R$ if necessary, we have both $\alpha_{N,q}\leqslant c^\ast_R\eta^2$, as required above, and $A_{\frac1T}\leqslant2^{-5}$, so that we may choose $K=A_{\frac1T}^{-\frac15}\geqslant2$. The two terms in
\eqref{eq:sfflonefourth} are then of the same order, and hence
\[
\Vert\rho_{N,\frac{1}{T}}-\varrho_{R,\frac1T}\Vert_1
\leqslant
C_RA_{\frac1T}^{\frac{4}{5}}.
\]
Finally, for $|t|\leqslant T$ we have $\e^{\eta|t|}\leqslant\e$, and the conclusion follows from \eqref{eq:fourierlone}.
\end{proof}
\eer

We are now ready to prove Corollary~\ref{cor:sff}.

\begin{proof}[Proof of Corollary~\ref{cor:sff}]
We omit the dependence on $\tau$. We can write
\begin{equation}
\label{eq:sffdecomposition}
K_N(t)
=
|\phi_N(t)|^2
+
\Var\left(
\tr(\e^{-\i tH_q})
\right).
\end{equation}
We first estimate the variance term. Let
$A\subset\unn{1}{N}$ be a $q$-subset. Duhamel's formula and cyclicity of the trace give
\[
\begin{aligned}
\partial_{J_A}
\tr(\e^{-\i tH_q})
&=
-\frac{\i}{\sqrt{M}}
\int_0^t
\tr\left(
\e^{-\i(t-s)H_q}
\Gamma_A
\e^{-\i sH_q}
\right)
\d s
=
-\frac{\i t}{\sqrt{M}}
\tr\left(
\Gamma_A\e^{-\i tH_q}
\right).
\end{aligned}
\]
By orthonormality of the family $(\Gamma_A)_A$ for the normalized Hilbert--Schmidt inner product and Bessel's inequality,
\[
\sum_{\substack{A\subset\unn{1}{N}\\|A|=q}}
\left|
\tr\left(
\Gamma_A\e^{-\i tH_q}
\right)
\right|^2
\leqslant
\tr\left(
(\e^{-\i tH_q})^*
\e^{-\i tH_q}
\right)
=
1.
\]
The Gaussian Poincar\'e inequality therefore gives
\begin{equation}
\label{eq:sffvariance}
\Var\left(
\tr(\e^{-\i tH_q})
\right)
\leqslant
\Eds\left[
\sum_{\substack{A\subset\unn{1}{N}\\|A|=q}}
\left|
\partial_{J_A}
\tr(\e^{-\i tH_q})
\right|^2
\right]
\leqslant
\frac{t^2}{M}.
\end{equation}
Since $|\phi_N(t)|\leqslant1$ and $\left\vert\widehat{\varrho_R}(t)\right\vert\leqslant\int_\Rbb\vert\varrho_R\vert\leqslant C_R$, we have
\begin{equation}
\label{eq:sffsquaremean}
\left|
|\phi_N(t)|^2-\left\vert\widehat{\varrho_R}(t)\right\vert^2
\right|
\leqslant
C_R\left|
\phi_N(t)-\widehat{\varrho_R}(t)
\right|.
\end{equation}
Moreover, by \eqref{eq:sfftimerange} we have $T^2\leqslant\frac{4(R+1)}{5}\log\frac{1}{\alpha_{N,q}}$ and therefore
\[
\alpha_{N,q}T^2\leqslant \frac{4(R+1)}{5}\,\alpha_{N,q}\log\frac{1}{\alpha_{N,q}}\leqslant c_R
\]
for $N$ large enough, so that Lemma~\ref{lem:sff1} applies. Combining it with
\eqref{eq:sffdecomposition}, \eqref{eq:sffvariance}, and
\eqref{eq:sffsquaremean}, we obtain
\[
\sup_{|t|\leqslant T}
\left|
K_N(t)-\left\vert\widehat{\varrho_R}(t)\right\vert^2
\right|
\leqslant
C_R\left[
\left(
\alpha_{N,q}^{R+1}T^{2R+2}
\right)^{\frac{4}{5}}
+
\frac{T^2}{M}
\right],
\]
which is the first inequality in \eqref{eq:boundsff}. The second one follows from $T^2\leqslant\frac{4(R+1)}{5}\log\frac{1}{\alpha_{N,q}}$ and $T^2\leqslant\log M$, which give
\[
\alpha_{N,q}^{R+1}T^{2R+2}\leqslant C_R\left(\alpha_{N,q}\log\frac{1}{\alpha_{N,q}}\right)^{R+1}
\qquad\text{and}\qquad
\frac{T^2}{M}\leqslant\frac{\log M}{M}.
\]
\end{proof}

\section{Proof of weak delocalization}
In this section we prove  Theorem~\ref{theo:isolaw} and Corollary~\ref{theo:delocinfinity}. A key input for the proof of this theorem is the following deterministic bound, whose proof is presented after the proof of Theorem~\ref{theo:isolaw} is concluded.

 \begin{proposition}
    \label{pro:maincliffpro}
    Let $q$ be even and fixed. We define the Fock space 
    \[
    \Fcal_{n} \coloneqq \left\{ \psi = (\psi_S)_{S\subset\unn{1}{n}}, \sum_{S\subset\unn{1}{n}}\vert \psi_S\vert^2<\infty\right\} \cong \Cbb^{D}.
    \]
    There exists $C_q>0$\footnote{The dependence of $C_q$ on $q$ is not explicit in \cite{visconti2026}, which is why we restrict this result to fixed $q$.} such that for $v,w\in\Fcal_n,$ we have
    \[
    \frac{1}{M}\sum_{\substack{A\subset\unn{1}{N}\\\vert A\vert=q}} \vert \langle v,\Gamma_Aw\rangle\vert^2 
    \leqslant C_q N^{-\frac{q}{2}}\Vert v\Vert^2\Vert w\Vert^2.
    \]
    The same bound holds after restriction to either parity sector.
    \end{proposition}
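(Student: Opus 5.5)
The plan is to reduce the statement to an operator-norm bound for $q$-body Majorana operators with $\ell^2$-normalized coefficients, and then to invoke the fermionic Hilbert--Schmidt-type estimates of \cites{christiansen2024hilbert, visconti2026}.

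\textbf{Duality step.} By Cauchy--Schwarz duality in $\ell^2(\{A:\vert A\vert=q\})$,
\[
\Big(\sum_{\vert A\vert=q}\vert\langle v,\Gamma_Aw\rangle\vert^2\Big)^{\frac12}
=\sup_{\Vert c\Vert_{\ell^2}=1}\Big\vert\Big\langle v,\sum_{\vert A\vert=q}c_A\Gamma_A\,w\Big\rangle\Big\vert
\leqslant \Vert v\Vert\,\Vert w\Vert\sup_{\Vert c\Vert_{\ell^2}=1}\Big\Vert\sum_{\vert A\vert=q}c_A\Gamma_A\Big\Vert_{\mathrm{op}}.
\]
Since $M=\binom Nq\geqslant c_qN^q$ for fixed $q$, the claim is equivalent to
\[
\Big\Vert\sum_{\vert A\vert=q}c_A\Gamma_A\Big\Vert_{\mathrm{op}}\leqslant C_qN^{\frac q4}\Vert c\Vert_{\ell^2}
\qquad\text{for every }c\in\Cbb^{\binom Nq}.
\]

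\textbf{Sharpness check.} Before proving this bound I would check that it is the right scale, since it sits strictly between two naive estimates. The triangle inequality together with $\Vert\Gamma_A\Vert=1$ only gives $\sqrt M\asymp N^{\frac q2}$. On the other hand, the bound cannot be improved below $N^{\frac q4}$. Indeed, take $c_A$ equal on the $\binom n{q/2}$ sets $A$ that are unions of $q/2$ pairs $\{2j-1,2j\}$. The corresponding $\Gamma_A$ are commuting products of the parity operators $Z_j=\pm\i\gamma_{2j-1}\gamma_{2j}$, so the sum is the elementary symmetric polynomial $e_{q/2}(Z_1,\dots,Z_n)$. Its norm is $\binom n{q/2}$, while $\Vert c\Vert_{\ell^2}=\binom n{q/2}^{1/2}$, giving a ratio of order $N^{q/4}$. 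So the target is a sharp many-body estimate, and no crude bound will do.

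\textbf{Main step and obstacle.} The main obstacle is proving the operator-norm bound itself. I would pass to the Fock representation $\Fcal_n$, writing $\gamma_{2j-1}=a_j+a_j^\ast$ and $\gamma_{2j}=\i(a_j-a_j^\ast)$ up to the Jordan--Wigner identification. Each $\Gamma_A$ then becomes a combination of normal-ordered monomials $a^\ast_Ia_J$ with $\vert I\vert+\vert J\vert\leqslant q$, with contractions lowering the degree. The operator $\sum_Ac_A\Gamma_A$ thus splits into finitely many, depending only on $q$, normal-ordered pieces with kernels $k_{IJ}$ satisfying $\Vert k\Vert_{\ell^2}\leqslant C_q\Vert c\Vert_{\ell^2}$. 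A naive application of $\Vert a(f)\psi\Vert\leqslant\Vert f\Vert\,\Vert\mathcal N^{1/2}\psi\Vert$ loses a full $n^{\frac q2}$. What is needed is the refined estimate of \cites{christiansen2024hilbert, visconti2026}: a normal-ordered operator of total degree $q$ with Hilbert--Schmidt kernel $k$ has norm at most $C_qn^{\frac q4}\Vert k\Vert_{\ell^2}$ on the full Fock space. The gain comes from balancing creation and annihilation parts, through an interpolation of the form $\Vert\sum k_{IJ}a_I^\ast a_J\Vert\lesssim \Vert k\Vert_2\, n^{(\vert I\vert+\vert J\vert)/4}$. I would quote this result directly; its $q$-dependence is not explicit there, which is why $q$ is kept fixed. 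Summing the finitely many pieces gives the displayed bound with some $C_q$.

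\textbf{Parity sectors.} Since $q$ is even, each $\Gamma_A$ commutes with $P_N$. Hence for $v,w\in\Hcal_N^\pm$ the quantities $\langle v,\Gamma_Aw\rangle$ are literally the same as in the full space, and the restricted statement follows immediately from the full one.
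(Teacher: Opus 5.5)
Your route is the paper's: $\ell^2$ duality over $q$-subsets, passage to creation/annihilation operators, normal ordering with contractions, and a Visconti-type bound of order $n^{(a+b)/4}$ for a normal-ordered piece $\sum_{I,J}B_{I,J}a_I^\ast a_J$ with $\vert I\vert=a$, $\vert J\vert=b$. Your parity-sector reduction and your sharpness example are correct. However, one step as you wrote it is false, and the argument does not close until it is repaired.

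\textbf{The false step.} You claim that all normal-ordered pieces have kernels with $\Vert k\Vert_{\ell^2}\leqslant C_q\Vert c\Vert_{\ell^2}$. This fails for the contraction terms. Keeping the $\delta_{jk}$ term in $a_ja_k^\ast=-a_k^\ast a_j+\delta_{jk}$ sums over a free index and produces the kernel $(\Ccal\vartheta)_{\bm\mu}=\sum_j\vartheta_{j,n+j,\bm\mu}$. Cauchy--Schwarz only gives $\Vert\Ccal\vartheta\Vert_2\leqslant\sqrt n\Vert\vartheta\Vert_2$, and this loss is real. Your own example shows it. Choose the sign so that $Z_j=1-2a_j^\ast a_j$. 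Then the normal-ordered constant part of $e_{q/2}(Z_1,\dots,Z_n)$ is $\binom n{q/2}\Id$, which already has the full norm $\binom n{q/2}$. Its kernel norm therefore exceeds $\Vert c\Vert_{\ell^2}=\binom n{q/2}^{1/2}$ by a factor $\asymp n^{q/4}$.

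\textbf{The correct bookkeeping.} Follow the paper. A piece produced by $\ell$ contractions has degree $a+b=q-2\ell$ and kernel norm at most $C_qn^{\ell/2}\Vert c\Vert_{\ell^2}$. You must then apply the degree-dependent bound $n^{(q-2\ell)/4}$ to it, not a uniform $n^{q/4}$; with the true kernel bound, the uniform one would give $n^{q/4+\ell/2}$, which is too large. The degree-dependent bound gives $n^{(q-2\ell)/4}\cdot n^{\ell/2}=n^{q/4}$ for every $\ell$. This exact compensation between the contraction loss and the degree gain is what makes the estimate close, so it has to appear explicitly in your proof.

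\textbf{What the reference actually gives.} The paper takes from Visconti the reduced-density-matrix bound $\Vert R_{p,\psi}\Vert_{\mathrm{HS}}\leqslant C_pm^{p/2}\Vert\psi\Vert^2$ for $m$-particle states, not an operator-norm statement on all of $\Fcal_n$. Your ``balancing'' step should be written out as the Cauchy--Schwarz bound $\vert\langle\phi,Q_{a,b}(B)\psi\rangle\vert\leqslant\Vert B\Vert_{\mathrm{HS}}\Vert X_{a,\phi}^\ast X_{b,\psi}\Vert_{\mathrm{HS}}\leqslant\Vert B\Vert_{\mathrm{HS}}\bigl(\Vert R_{a,\phi}\Vert_{\mathrm{HS}}\Vert R_{b,\psi}\Vert_{\mathrm{HS}}\bigr)^{1/2}$. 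Follow it with the decomposition of $\Fcal_n$ into particle-number sectors. That decomposition costs only a $q$-dependent constant, because each $Q_{a,b}$ shifts particle number by $a-b\in\{-q,\dots,q\}$.
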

    
In order to prove Theorem~\ref{theo:isolaw}, we start with the following lemma showing that in expectation the diagonal resolvent entries are exactly equal to its averaged trace.
 \begin{lemma}
    \label{lem:exptrick}
    For any $\tau\in \{+,-\}$, and any $z\in\mathbb{C}_+$, we have 
    \begin{equation}
    \E[G_\tau(z)]=m_{N,\tau}(z)I_{\mathcal{H}_N^\tau}
    \end{equation}
    \end{lemma}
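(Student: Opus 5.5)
The plan is to exploit the symmetry of the law of $H_q$ under conjugation by Clifford monomials. For any subset $S\subset\unn{1}{N}$, conjugation by the monomial $\gamma_S$ (a unitary) maps every $\Gamma_A$ to $\pm\Gamma_A$. The sign is fixed by the parity of $\vert A\cap S\vert$ together with $\vert S\vert$. Since the $J_A$ are i.i.d.\ symmetric Gaussians, $\gamma_S H_q\gamma_S^\ast$ has the same law as $H_q$. Consequently $\E[G(z)]=\gamma_S\E[G(z)]\gamma_S^\ast$ for every $S$. To remain inside a parity sector we restrict to even $|S|$, so that $\gamma_S$ commutes with $P_N$. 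The even monomials generate the even subalgebra $\mathrm{Cl}^0_N$, which is isomorphic to $\Cbb^{D/2\times D/2}\oplus\Cbb^{D/2\times D/2}$ and acts block-diagonally on $\Hcal_N^+\oplus\Hcal_N^-$.

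The key step is a Schur-type argument. The matrix $X=\E[G(z)]$ commutes with every even monomial, hence with the whole even subalgebra. I will verify, for instance via the Jordan--Wigner representation, that the even subalgebra restricted to $\Hcal_N^\tau$ is all of $\mathrm{End}(\Hcal_N^\tau)$. This amounts to irreducibility of each half-spin representation, and can be checked by counting: there are $2^{N-1}$ linearly independent even monomials. These monomials are spanned by $\{\Id,P_N\}$ times operators acting irreducibly on each block, and the resulting dimension $2\cdot(D/2)^2=2^{N-1}$ matches, so the algebra is exactly the block-diagonal algebra. It follows that the restriction $X_\tau$ of $X$ to $\Hcal_N^\tau$ commutes with all of $\mathrm{End}(\Hcal_N^\tau)$, and therefore $X_\tau=c_\tau\Id_{\Hcal_N^\tau}$ for some scalar $c_\tau$.

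Finally we identify the scalar by taking the normalized trace on the sector: $c_\tau=\tr_\tau(X_\tau)=\E[\tr_\tau G_\tau(z)]=m_{N,\tau}(z)$.

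The main obstacle is to justify cleanly that the even monomials span the full block-diagonal algebra. I would try the dimension count first. The alternative is to note directly that the $\gamma_{2j-1}\gamma_{2j}\propto\sigma_z$ at site $j$, together with the products $\gamma_{2j}\gamma_{2j+1}$, generate all local Pauli operators up to the parity string, and that these already act irreducibly on each parity sector.
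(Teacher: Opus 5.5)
Your proposal is correct and follows the same route as the paper: the law of $H_q$ is invariant under conjugation by even Clifford monomials, and an operator commuting with all even monomials is a scalar on each parity sector. You also justify that second step, which the paper leaves implicit (the $2^{N-1}$ even monomials are orthonormal for $\tr$ and commute with $P_N$, so they span $\mathrm{End}(\Hcal_N^+)\oplus\mathrm{End}(\Hcal_N^-)$), and you identify the scalar as $m_{N,\tau}(z)$ via $\tr_\tau$; both additions are right.
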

\begin{proof}
Consider $B\subset \unn{1}{N}$ with $|B|$ even. Consider any even Clifford unitary monomial $U_B$ chosen to be a phase multiple of $\prod_{j\in B}\gamma_j$.

For every $q$-subset $A$ we obtain
\[
U_B\Gamma_AU_B^*=\epsilon_{A,B}\Gamma_A, \qquad\quad \epsilon_{A,B}=\pm 1.
\]
We thus have
\[
U_BH_q(J)U_B^*=H_q(\epsilon J),
\]
where $(\epsilon J)_A:=\epsilon_{A,B} J_A$. Since the random vectors $J$ and $\epsilon J$ have the same law, we obtain

\[
U_B\E G_\tau(z)U_B^*=\E G_\tau(z).
\]
This shows that $\E G_\tau(z)$ commutes with all even Clifford monomials and so it must be a scalar multiple of the identity.
\end{proof}

Next, fix ${\bm x},{\bm y}\in\Hcal_N^\tau$  with
$\Vert{\bm x}\Vert=\Vert{\bm y}\Vert=1$, and define
\begin{equation}
\label{eq:defF}
F_{{\bm x},{\bm y}}^{(E)}(J)
\coloneqq
\left\langle {\bm x},G_\tau(E+\i\eta){\bm y}\right\rangle.
\end{equation}
For a deterministic unit vector ${\bm x}\in\Hcal_N^\tau$, we also set
\[
Y_{\bm x}^{(E)}(J)
\coloneqq
\Im F_{{\bm x},{\bm x}}^{(E)}(J)
=
\Im\left\langle{\bm x},G_\tau(E+\i\eta){\bm x}\right\rangle>0.
\]

The following lemma gives us gradient estimates on $Y_{\bm{x}}^{(E)}$ and $F_{\bm{x},\bm{y}}^{(E)}$.

\begin{lemma}
\label{lem:wardgradient}
For every realization of the disorder and every pair of deterministic unit vectors ${\bm x},{\bm y}\in\Hcal_N^\tau$, we have
\begin{equation}
\label{eq:wardgradient}
\sum_{|A|=q}
\left|
\partial_{J_A}\log Y_{\bm x}^{(E)}
\right|^2
\leqslant
\frac{C_q}{N^{\frac{q}{2}}\eta^2}
\quad\text{and}\quad 
\sum_{|A|=q}
\left|
\partial_{J_A}F_{{\bm x},{\bm y}}^{(E)}
\right|^2
\leqslant
\frac{C_q}{N^{\frac{q}{2}}\eta^2}
Y_{\bm x}^{(E)}Y_{\bm y}^{(E)}.
\end{equation}
\end{lemma}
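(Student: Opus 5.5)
Both bounds follow from the differentiation rule $\partial_{J_A}G=-M^{-1/2}G\Gamma_AG$, combined with Proposition~\ref{pro:maincliffpro} and the Ward identity. Write $G=G_\tau(E+\i\eta)$. Then
\[
\partial_{J_A}F_{{\bm x},{\bm y}}^{(E)}=-\frac{1}{\sqrt M}\left\langle G^\ast{\bm x},\Gamma_A\,G{\bm y}\right\rangle .
\]
Since $q$ is even, $\Gamma_A$ preserves $\Hcal_N^\tau$, and $G^\ast{\bm x},G{\bm y}\in\Hcal_N^\tau$. We can therefore apply Proposition~\ref{pro:maincliffpro} in the sector $\tau$ with $v=G^\ast{\bm x}$ and $w=G{\bm y}$, which gives
\[
\sum_{|A|=q}\left|\partial_{J_A}F_{{\bm x},{\bm y}}^{(E)}\right|^2
=\frac1M\sum_{|A|=q}\left|\langle G^\ast{\bm x},\Gamma_AG{\bm y}\rangle\right|^2
\leqslant C_qN^{-\frac q2}\,\Vert G^\ast{\bm x}\Vert^2\,\Vert G{\bm y}\Vert^2 .
\]
By the Ward identity, $GG^\ast=G^\ast G=|G|^2=\eta^{-1}\Im G$. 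Hence $\Vert G^\ast{\bm x}\Vert^2=\eta^{-1}Y_{\bm x}^{(E)}$ and $\Vert G{\bm y}\Vert^2=\eta^{-1}Y_{\bm y}^{(E)}$, and the second inequality in \eqref{eq:wardgradient} follows.

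For the first inequality, note that $\partial_{J_A}Y_{\bm x}^{(E)}=\Im\partial_{J_A}F_{{\bm x},{\bm x}}^{(E)}$, because $J_A$ is real. Therefore $|\partial_{J_A}Y_{\bm x}|\leqslant|\partial_{J_A}F_{{\bm x},{\bm x}}|$. Applying the bound just proved with ${\bm y}={\bm x}$ gives
\[
\sum_A|\partial_{J_A}Y_{\bm x}|^2\leqslant C_qN^{-\frac q2}\eta^{-2}\,Y_{\bm x}^2 .
\]
Dividing by $Y_{\bm x}^2$, which is legitimate since $Y_{\bm x}>0$, yields the bound for $\partial_{J_A}\log Y_{\bm x}$.

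There is no real obstacle here. The only points to check are that the vectors $G^\ast{\bm x}$ and $G{\bm y}$ remain in the parity sector, so that the sectorial version of Proposition~\ref{pro:maincliffpro} applies, and that the adjoint placement in the inner product is correct. The essential input, namely not losing the dimension $D$ when summing over the $M$ monomials, is entirely contained in Proposition~\ref{pro:maincliffpro}.
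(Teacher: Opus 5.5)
Your proof is correct and follows the paper's argument essentially verbatim. Both use the derivative formula $\partial_{J_A}F_{{\bm x},{\bm y}}^{(E)}=-M^{-1/2}\langle G^\ast{\bm x},\Gamma_AG{\bm y}\rangle$, then Proposition~\ref{pro:maincliffpro} with $v=G^\ast{\bm x}$, $w=G{\bm y}$, then the Ward identity $\Vert G{\bm x}\Vert^2=\Vert G^\ast{\bm x}\Vert^2=\eta^{-1}Y_{\bm x}^{(E)}$, and finally the bound $\vert\partial_{J_A}Y_{\bm x}\vert\leqslant\vert\partial_{J_A}F_{{\bm x},{\bm x}}\vert$ followed by division by $Y_{\bm x}^2$.
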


\begin{proof}
 We have for $z=E+\i\eta,$
\[
\partial_{J_A}F_{{\bm x},{\bm y}}^{(E)}
=
-\frac{1}{\sqrt{M}}
\left\langle
G_\tau^*(z){\bm x},\Gamma_AG_\tau(z){\bm y}
\right\rangle.
\]
Proposition~\ref{pro:maincliffpro} therefore gives
\[
\sum_{|A|=q}
\left|
\partial_{J_A}F_{{\bm x},{\bm y}}^{(E)}
\right|^2
\leqslant
\frac{C_q}{N^{\frac{q}{2}}}
\Vert G_\tau^*(z){\bm x}\Vert^2
\Vert G_\tau(z){\bm y}\Vert^2.
\]
From the Ward identity, we have
\[
\Vert G_\tau(z){\bm x}\Vert^2
=
\Vert G^*_\tau(z){\bm x}\Vert^2
=
\frac{1}{\eta}
\Im\left\langle{\bm x},G_\tau(z){\bm x}\right\rangle
=
\frac{Y_{\bm x}^{(E)}}{\eta}.
\]
Applying the same identity to ${\bm y}$ proves the second inequality in \eqref{eq:wardgradient}.

Moreover,
\[
\left|
\partial_{J_A}Y_{\bm x}^{(E)}
\right|
\leqslant
\frac{1}{\sqrt{M}}
\left|
\left\langle
G^*_\tau(z){\bm x},\Gamma_AG_\tau(z){\bm x}
\right\rangle
\right|.
\]
Hence, using Proposition~\ref{pro:maincliffpro} and the Ward identity once again,
\[
\begin{aligned}
\sum_{|A|=q}
\left|
\partial_{J_A}Y_{\bm x}^{(E)}
\right|^2
&\leqslant
\frac{C_q}{N^{\frac{q}{2}}}
\Vert G^*_\tau(z){\bm x}\Vert^2
\Vert G_\tau(z){\bm x}\Vert^2
=
\frac{C_q}{N^{\frac{q}{2}}\eta^2}
\left(Y_{\bm x}^{(E)}\right)^2.
\end{aligned}
\]
Dividing by $\left(Y_{\bm x}^{(E)}\right)^2$ proves the first part of 
\eqref{eq:wardgradient}.
\end{proof}

We next record the resulting moment bound.

\begin{lemma}
\label{lem:wardmoments}
There exist constants $c,C>0$, depending only on $L$, $\delta$ and $C_*$, such that, uniformly for $|E|\leqslant L$ and $N^{-\frac{1}{2}+\delta}\leqslant\eta\leqslant 1,$
we have
\begin{equation}
\label{eq:wardmomentbound}
\left(
\Eds\left[
\left(Y_{\bm x}^{(E)}\right)^p
\right]
\right)^{\frac{1}{p}}
\leqslant C
\end{equation}
for every deterministic unit vector ${\bm x}\in\Hcal_N^\tau$ and every $2\leqslant p\leqslant cN^{\frac{q}{2}}\eta^2.$
\end{lemma}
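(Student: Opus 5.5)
The plan is to combine the log-gradient bound of Lemma~\ref{lem:wardgradient} with Gaussian concentration for $\log Y_{\bm x}^{(E)}$, and to anchor the mean through Lemma~\ref{lem:exptrick}.

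\emph{Concentration of the logarithm.} By the first inequality of \eqref{eq:wardgradient}, the function $J\mapsto\log Y_{\bm x}^{(E)}(J)$ is Lipschitz in the Gaussian vector $(J_A)_A$, with constant at most $\sigma\coloneqq C_q^{1/2}N^{-\frac q4}\eta^{-1}$. Gaussian concentration then yields, for every $u>0$,
\[
\Pds\left(\left\vert\log Y_{\bm x}^{(E)}-\Eds\log Y_{\bm x}^{(E)}\right\vert\geqslant u\right)\leqslant 2\exp\left(-\frac{u^2}{2\sigma^2}\right).
\]
Consequently $Y=\e^{\Eds\log Y}\e^{X}$, where $X$ is centered and sub-Gaussian with variance proxy $\sigma^2$. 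This gives
\[
\Eds\left[Y^p\right]^{\frac1p}\leqslant \e^{\Eds\log Y}\,\Eds\left[\e^{pX}\right]^{\frac1p}\leqslant \e^{\Eds\log Y}\,\e^{p\sigma^2/2}.
\]
The restriction $p\leqslant cN^{\frac q2}\eta^2$ is exactly what makes $p\sigma^2\leqslant cC_q$ bounded. The moment bound therefore reduces to bounding $\e^{\Eds\log Y}$.

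\emph{Anchoring the mean.} By Jensen's inequality, $\e^{\Eds\log Y}\leqslant \Eds Y$. Lemma~\ref{lem:exptrick} gives
\[
\Eds Y=\Im\left\langle{\bm x},\Eds G_\tau(z){\bm x}\right\rangle=\Im m_{N,\tau}(z).
\]
Since $q$ is fixed, $\alpha_{N,q}\asymp N^{-1}$. For $\eta\geqslant N^{-\frac12+\delta}$ and $N$ large we have $\alpha_{N,q}\leqslant c\eta^2$, so \eqref{eq:apriorimn} of Lemma~\ref{lem:apriori} gives $\Im m_{N,\tau}(E+\i\eta)\leqslant C$, uniformly in $E\in\Rbb$. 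Combining the two steps gives \eqref{eq:wardmomentbound} with constants depending only on $C_q$ (hence on $C_*$) and the a priori constant.

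\emph{Main obstacle.} The only delicate point is the dependence on $\delta$ and $L$. Neither is genuinely needed: $\delta$ enters only to guarantee $\alpha_{N,q}\ll\eta^2$, and the bound is uniform in $E$. I would double-check that Lemma~\ref{lem:apriori} applies within a single parity sector $\tau\in\{+,-\}$, which it does by its statement. Otherwise the argument is straightforward, since the log-Lipschitz bound avoids any self-consistent bootstrap on $Y$.
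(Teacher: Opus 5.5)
Your proof is correct and follows the paper's argument: Gaussian concentration for $\log Y_{\bm x}^{(E)}$ via the log-gradient bound of Lemma~\ref{lem:wardgradient}, anchored by $\Eds Y_{\bm x}^{(E)}=\Im m_{N,\tau}(E+\i\eta)\leqslant C$ from Lemma~\ref{lem:exptrick} and \eqref{eq:apriorimn}. The only difference is cosmetic. You center at $\Eds\log Y_{\bm x}^{(E)}$, control it by Jensen, and use the exponential-moment form of Gaussian concentration. The paper instead centers at a median, bounds the median via Markov, and integrates the tail. Both give the same factor $\e^{Cp\,N^{-q/2}\eta^{-2}}$.
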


\begin{proof}
By Lemma~\ref{lem:exptrick},
\[
\Eds\left[Y_{\bm x}^{(E)}\right]
=
\Im m_{N,\tau}(E+\i\eta).
\]
Since $q$ is bounded independently of $N$, we have
$\alpha_{N,q}\leqslant CN^{-1}$. Therefore, for
$\eta\geqslant N^{-\frac{1}{2}+\delta}$,
\[
\frac{\alpha_{N,q}}{\eta^2}
\leqslant
CN^{-2\delta}.
\]
The bound \eqref{eq:apriorimn} of Lemma~\ref{lem:apriori} now yields
\[
\Eds\left[Y_{\bm x}^{(E)}\right]\leqslant C.
\]
Markov's inequality shows that a median of
$\log Y_{\bm x}^{(E)}$ is bounded from above by an absolute constant.

On the other hand, \eqref{eq:wardgradient} shows that
$\log Y_{\bm x}^{(E)}$ is
$C_qN^{-\frac{q}{4}}\eta^{-1}$-Lipschitz as a function of the Gaussian vector $(J_A)_A$. Gaussian concentration therefore gives
\begin{equation}
\label{eq:logwardconcentration}
\Pds\left(
\log Y_{\bm x}^{(E)}\geqslant \log C+s
\right)
\leqslant
2\exp\left(-cN^{\frac{q}{2}}\eta^2s^2\right),
\qquad s>0.
\end{equation}
Integrating this tail, we obtain
\[
\begin{aligned}
\Eds\left[
\left(Y_{\bm x}^{(E)}\right)^p
\right]
&\leqslant
C^p\left(
1+
2p\int_0^\infty
\exp\left(ps-cN^{\frac{q}{2}}\eta^2s^2\right)\d s
\right)
\leqslant
C^p\exp\left(\frac{Cp^2}{N^{\frac{q}{2}}\eta^2}\right).
\end{aligned}
\]
Taking the $p$-th root proves \eqref{eq:wardmomentbound} whenever
$p\leqslant cN^{\frac{q}{2}}\eta^2$.
\end{proof}

We are now ready to prove Theorem~\ref{theo:isolaw}.

\begin{proof}[Proof of Theorem~\ref{theo:isolaw}]
The Gaussian $L^p$ Poincar\'e inequality gives, for every $p\geqslant2$,
\[
\left(
\Eds\left[
\left|
F_{{\bm x},{\bm y}}^{(E)}
-
\Eds[F_{{\bm x},{\bm y}}^{(E)}]
\right|^p
\right]
\right)^{\frac{1}{p}}
\leqslant
C\sqrt{p}
\left(
\Eds\left[
\left(
\sum_{|A|=q}
\left|
\partial_{J_A}F_{{\bm x},{\bm y}}^{(E)}
\right|^2
\right)^{p/2}
\right]
\right)^{\frac{1}{p}}.
\]
Here and below, the inequality is applied separately to the real and imaginary parts. By Lemma~\ref{lem:wardgradient},
\[
\begin{aligned}
\left(
\Eds\left[
\left|
F_{{\bm x},{\bm y}}^{(E)}
-
\Eds F_{{\bm x},{\bm y}}^{(E)}
\right|^p
\right]
\right)^{\frac{1}{p}}
&\leqslant
\frac{C_q\sqrt{p}}{N^{\frac{q}{4}}\eta}
\left(
\Eds\left[
\left(
Y_{\bm x}^{(E)}Y_{\bm y}^{(E)}
\right)^{p/2}
\right]
\right)^{\frac{1}{p}}
\leqslant
\frac{C_q\sqrt{p}}{N^{\frac{q}{4}}\eta},
\end{aligned}
\]
where the last inequality follows from Cauchy--Schwarz and
Lemma~\ref{lem:wardmoments}, provided that
$p\leqslant cN^{\frac{q}{2}}\eta^2$.

Choosing $p$ proportional to $N^{\frac{q}{2}}\eta^2t^2$ and using Markov's inequality, we obtain, for every $0<t\leqslant1$,
\begin{equation}
\label{eq:fixxebound}
\mathds{P}\left(
\left|
F_{{\bm x},{\bm y}}^{(E)}(J)
-
\Eds [F_{{\bm x},{\bm y}}^{(E)}(J)]
\right|
\geqslant t
\right)
\leqslant
C\exp\left(-cN^{\frac{q}{2}}\eta^2t^2\right).
\end{equation}
If $N^{\frac{q}{2}}\eta^2t^2$ is bounded, this estimate follows by increasing the constant $C$.

By Lemma~\ref{lem:exptrick},
\[
\Eds[F_{{\bm x},{\bm y}}^{(E)}]
=
m_{N,\tau}(E+\i\eta)
\langle{\bm x},{\bm y}\rangle.
\]
Therefore, Proposition~\ref{pro:higherstab} and \eqref{eq:fixxebound} give
\[
\mathds{P}\left(
\left|
F_{{\bm x},{\bm y}}^{(E)}(J)
-
\Phi_R(E+\i\eta)\langle{\bm x},{\bm y}\rangle
\right|
\geqslant
\frac{C\alpha_{N,q}^{R+1}}{\eta^{2R+2}}+t
\right)
\leqslant
C\exp\left(-cN^{\frac{q}{2}}\eta^2t^2\right),
\]
where we used that $|\langle{\bm x},{\bm y}\rangle|\leqslant1$, and that for $q$ fixed and $\eta\geqslant N^{-\frac{1}{2}+\delta}$ we have $\alpha_{N,q}\eta^{-2}\leqslant CN^{-2\delta}$, so that the smallness assumption of Proposition~\ref{pro:higherstab} is satisfied for $N$ large enough.

To obtain uniformity in $E$, choose a net $\Qcal_N$ of $[-L,L]$ with mesh width $h_N=\eta^2t$, the letter $\Pcal$ having been used for the net in the proof of Theorem~\ref{theo:locallaw}.
The cardinality of this net is bounded by $|\Qcal_N|
\leqslant
C_L\left(1+\frac{1}{\eta^2t}\right).$
For every $E\in[-L,L]$, choose $E_0\in\Qcal_N$ such that
$|E-E_0|\leqslant h_N$. The deterministic resolvent bound gives
$\left|
F_{{\bm x},{\bm y}}^{(E)}-
F_{{\bm x},{\bm y}}^{(E_0)}
\right|
\leqslant
{|E-E_0|}{\eta^{-2}}
\leqslant t.
$
The same bound holds for
$|\Phi_R(E+\i\eta)-\Phi_R(E_0+\i\eta)|$, up to a constant $C_R$, since $\vert\Phi_R'\vert\leqslant C_R\eta^{-2}$ by \eqref{eq:gaussderivbound} and \eqref{eq:thetabounds}. A union bound over
$\Qcal_N$, followed by an adjustment of the constants, proves
\eqref{eq:isotropicmain}.
\end{proof}

\begin{proof}[Proof of Corollary~\ref{theo:delocinfinity}]
By spectral decomposition, for every $\eta>0$,
\begin{equation}
\label{eq:spectraldelocward}
\left|
\left\langle
{\bm u}_a^\tau,\psi_j^\tau
\right\rangle
\right|^2
\leqslant
\eta
\sum_{i=1}^{D^\tau}
\frac{
\left|
\left\langle
{\bm u}_a^\tau,\psi_i^\tau
\right\rangle
\right|^2\eta
}{
(\lambda_i^\tau-\lambda_j^\tau)^2+\eta^2
}
=
\eta
\Im\left\langle
{\bm u}_a^\tau,
G_\tau(\lambda_j^\tau+\i\eta)
{\bm u}_a^\tau
\right\rangle.
\end{equation}
Apply Theorem~\ref{theo:isolaw} with
\[
{\bm x}={\bm y}={\bm u}_a^\tau,
\qquad
t=1,
\qquad
\eta=u.
\]
Since $u\geqslant N^{-\frac{1}{2}+\delta}$ and $q$ is fixed,
\[
\frac{\alpha_{N,q}^{R+1}}{u^{2R+2}}
\leqslant
CN^{-2\delta(R+1)}
\leqslant C.
\]
Moreover, $\Im \Phi_R(E+\i u)$ is uniformly bounded, by \eqref{eq:gaussderivunif} and \eqref{eq:thetabounds}. Hence,
\[
\Pds \left(\max_{|E|\leqslant L}
\Im\left\langle
{\bm u}_a^\tau,
G_\tau(E+\i u)
{\bm u}_a^\tau
\right\rangle
> C\right)\leqslant 
C_L\left(1+\frac{1}{u^2}\right)
\exp\left(-cN^{\frac{q}{2}}u^2\right),
\]
We finish by taking a union bound over the deterministic basis, since $D^\tau=2^{\frac{N}{2}-1}$ and, for every fixed even $q\geqslant4$, $N^{\frac{q}{2}}u^2 \geqslant N^{\frac{q}{2}-1+2\delta}\geqslant N^{1+2\delta},$
the factor $D^\tau(1+u^{-2})$ can be absorbed into the exponential. Thus
\[
C_LD^\tau\left(1+\frac{1}{u^2}\right)
\exp\left(-cN^{\frac{q}{2}}u^2\right)
\leqslant
\exp\left(-c'N^{\frac{q}{2}}u^2\right)
\]
for all sufficiently large \(N\). Combining this estimate with
\eqref{eq:spectraldelocward} proves \eqref{eq:gooddeloc}.
\end{proof}

We thus finish by giving the proof of Proposition~\ref{pro:maincliffpro}.
\begin{proof}[Proof of Proposition~\ref{pro:maincliffpro}]
    We write the proof in the full Fock space. The proof is based on using the recent results of \cites{christiansen2024hilbert, visconti2026}. However, we first state the problem in a simpler way to apply these results. Consider $(e_S)_{S\subset\unn{1}{n}}$ the coordinate vectors of $\Fcal_n$ and write 
    \[
    \Fcal_n = \bigoplus_{m=0}^n \Fcal_{n,m} \quad\text{with}\quad 
    \Fcal_{n,m} = \text{span}\{e_S:\vert S\vert=m\}.
    \]
    Denote $\nu(j,S) = \vert \{s\in S:s<j\}\vert$ and the creation and annihilation operators defined on the coordinate basis by 
    \[
    a_je_S = \left\{
        \begin{array}{ll}
            (-1)^{\nu(j,S)}e_{S\setminus{j}}&\text{if }j\in S,\\
            0 & \text{otherwise}
        \end{array}
    \right.
    \quad \text{and}\quad 
    a_j^\ast e_S = \left\{
        \begin{array}{ll}
            (-1)^{\nu(j,S)}e_{S\cup{j}}&\text{if }j\not\in S,\\
            0 & \text{otherwise}.
        \end{array}
    \right.
    \]
    They satisfy the anticommutation relations
    \[
    a_ia_j+a_ja_i=0,
    \quad 
    a_i^\ast a_j^\ast+a_j^\ast a_i^\ast =0,
    \quad\text{and}\quad   
    a_ia_j^\ast+a_j^\ast a_i = \delta_{ij}\Id_D.
    \]
    In particular, we can write the Majorana fermions as 
    \begin{equation}\label{eq:majoranatoa}
    \gamma_{2j-1} = a_j+a_j^\ast \quad\text{and}\quad  \gamma_{2j} = \i (a_j^\ast -a_j).
    \end{equation}
    For $I = \{i_1<\dots<i_p\}$, we denote $a_I = a_{i_p}\dots a_{i_1}$. For $\psi\in\Fcal_{n,m}$, $I, J$ a $p$-subset, and $K$ a $m-p$ subset,  we define matrices through
    \[
    (X_{p,\psi})_{K,I} \coloneqq \langle e_K,a_I\psi\rangle,
    \quad R_{p,\psi} \coloneqq X_{p,\psi}^\ast X_{p,\psi} \quad\text{such that }\quad 
    (R_{p,\psi})_{I,J} = \langle a_I\psi,a_J\psi\rangle.
    \]
    In particular, we have the properties 
    \[
    R_{p,\psi} \geqslant 0,\quad \Tr R_{p,\psi} = \binom{m}{p}\Vert\psi\Vert^2\quad\text{and}\quad 
    \Vert R_{p,\psi}\Vert_{\mathrm{HS}}^2 = \sum_{\substack{\vert I\vert =p\\ \vert J\vert =p}}\vert \langle a_I\psi,a_J\psi\rangle\vert^2.
    \]
    Here and in the rest of this proof $\Vert\cdot\Vert_{\mathrm{HS}}$ denotes the \emph{unnormalized} Hilbert--Schmidt norm, as in \eqref{eq:visconti} below. This corresponds exactly to the reduced density matrix of \cite{visconti2026}*{Theorem 1} which proves that 
    \begin{equation}\label{eq:visconti}
    \Vert R_{p,\psi}\Vert_{\mathrm{HS}}\leqslant C_p m^{\frac{p}{2}}\Vert \psi\Vert^2.
    \end{equation}
    We now use the identity for a more general reduced matrix. For a given matrix $B$, we define 
    \[
    Q_{a,b}(B) = \sum_{\substack{\vert I\vert=a\\ \vert J\vert =b}}B_{I,J}a_I^\ast a_J.
    \]
    For $\psi \in \Fcal_{n,m}$ and $\phi\in \Fcal_{n,m+a-b}$ we have that $Q_{a,b}(B)\psi$ and $\phi$ lie in the same space, and by adjointness we have 
    \[
    \langle \phi,Q_{a,b}(B)\psi\rangle =\sum_{I,J}B_{I,J}\langle a_I\phi,a_J\psi\rangle.
    \]
    If we define the matrix 
    \[
    R^{(a,b)}_{\phi,\psi} = X^\ast_{a,\phi}X_{b,\psi} 
    \quad\text{such that }\quad 
    (R^{(a,b)}_{\phi,\psi})_{I,J} = \langle a_I \phi,a_J\psi\rangle,
    \]
    then by the Cauchy--Schwarz inequality, we have 
    \begin{equation}\label{eq:boundQab}
    \begin{aligned}
    \vert \langle \phi, Q_{a,b}(B)\psi\rangle\vert \leqslant \Vert B\Vert_{\mathrm{HS}}\Vert R^{(a,b)}_{\phi,\psi}\Vert_{\mathrm{HS}}
    =
    \Vert B\Vert_{\mathrm{HS}}\sqrt{\Tr(X_{a,\phi}X_{a,\phi}^\ast X_{b,\psi}X_{b,\psi}^\ast}) 
    &\leqslant \Vert B\Vert_{\mathrm{HS}}\sqrt{\Vert R_{a,\phi}\Vert_{\mathrm{HS}}\Vert R_{b,\psi}\Vert_{\mathrm{HS}}}
    \\& \leqslant C_{a,b} n^{\frac{a+b}{4}}\Vert B\Vert_{\mathrm{HS}}\Vert \phi\Vert \Vert \psi\Vert
    \end{aligned}
    \end{equation}
    where we used Visconti's inequality \eqref{eq:visconti} in the last step. We now need to bring back these estimates to the Majorana Fermions, denote $(b_1,\dots,b_{2n}) = (a_1,\dots,a_n,a_1^\ast,\dots,a_n^\ast)$ then for $(\theta_A)_{A}$ a family indexed by $q$-subsets,
    \[
    T_\theta \coloneqq \sum_{\substack{A\subset\unn{1}{N}\\\vert A\vert=q}}\theta_A\Gamma_A 
    =
    \sum_{j_1,\dots,j_q=1}^{2n} \vartheta_{j_1,\dots,j_q}b_{j_1}\dots b_{j_q}
    \]
    where we unfolded the product of Majorana fermions in terms of $a_i$'s and $a_i^\ast$'s through \eqref{eq:majoranatoa}. Since we have 
    \[
    \begin{bmatrix}
        \gamma_{2j-1}\\\gamma_{2j}
    \end{bmatrix}
    =
    U \begin{bmatrix}
        a_j \\ a_j^\ast
    \end{bmatrix}
    \quad\text{with}\quad 
    U = \begin{bmatrix}
        1 & 1\\ -\i & \i
    \end{bmatrix}
    \quad\text{which follows}\quad UU^\ast = 2\Id_2,
    \]
    we obtain that $\Vert \vartheta\Vert_2 \leqslant 2^{\frac{q}{2}}\Vert \theta\Vert_2.$ In the definition of $Q_{a,b}(B)$ we see that the creation and annihilation operators are ordered so that $a_I^\ast$ is on the left of $a_J$. To create this order, we can use the identity 
    \begin{equation}\label{eq:ordering}
    a_ja_k^\ast = -a_k^\ast a_j +\delta_{jk}\Id_D,
    \end{equation}
    where the first term only exchanges the two operators while the second term removes both. We call this second term a contraction. In particular, a term containing $\ell$ contractions has $q-2\ell$ operators left in its product. To see exactly the effect of a contraction, consider $\bm{\mu} = (\mu_1,\dots,\mu_{q-2})$ the remaining indices after one contraction, if we consider the term 
    \[
    \sum_{j,k=1}^n \vartheta_{j,n+k,\bm{\mu}}a_ja_k^\ast b_{\mu_1}\dots b_{{\mu}_{q-2}}
    =
    -\sum_{j,k=1}^n \vartheta_{j,n+k,\bm{\mu}}a_k^\ast a_j b_{\mu_1}\dots b_{{\mu}_{q-2}}
    +
    \sum_{j=1}^n \vartheta_{j,n+j,\bm{\mu}}b_{\mu_1}\dots b_{{\mu}_{q-2}}.
    \]
    Thus we can consider the contraction operator $\Ccal$ defined by 
    \[
    (\Ccal \vartheta)_{\bm{\mu}} \coloneqq \sum_{j=1}^n \vartheta_{j,n+j,\bm{\mu}}
    \quad\text{with}\quad 
    \Vert \Ccal\vartheta\Vert_2 \leqslant \sqrt{n}\Vert \vartheta\Vert_2
    \]
    by Cauchy--Schwarz. And for a fixed pattern of $\ell$ contractions, we have $\Vert \Ccal^\ell \vartheta\Vert_2\leqslant n^{\frac{\ell}{2}}\Vert \vartheta\Vert_2$. After $\ell$ contractions have been performed, we put the remaining $q-2\ell$ operators in $a_I^\ast a_J$ order. Thus if $a$ operators are creation operators and $b$ are annihilation operators we have the relation $a+b=q-2\ell$. Finally, this means that we can write 
    \[
    T_\theta = \sum_{\ell=0}^{\frac{q}{2}} \sum_{a+b = q-2\ell} Q_{a,b}(B_{a,b}^{(\ell)})
    \]
    where the matrices $B_{a,b}^{(\ell)}$ collect all the ordering terms coming from the $\delta_{jk}$ term  in \eqref{eq:ordering}. Note that the signs, permutations and sums over patterns in these matrices only depend on $q$ which is independent of $N$ in this proof. Thus, from our bound on the contraction operators we get that 
    \begin{equation}\label{eq:boundBab}
    \left(  
        \sum_{a+b=q-2\ell} \Vert B_{a,b}^{(\ell)}\Vert_{\mathrm{HS}}^2
    \right)^{\frac{1}{2}}  
    \leqslant C_q n^{\frac{\ell}{2}}\Vert \vartheta\Vert_2
    \end{equation}
    for some $C_q>0$. Although the estimate \eqref{eq:boundQab} was derived for vectors with fixed particle number, it extends to arbitrary vectors in $\Fcal_n$ at the cost of changing the constant $C_q$. Indeed, write   
    \[
        v = \sum_{m=0}^n v_m,\quad w=\sum_{m=0}^n w_m
        \quad\text{with}\quad v_m,w_m\in\Fcal_{n,m}.
    \]
    Each normally ordered term $Q_{a,b}(B)$ maps $\Fcal_{n,m}$ into $\Fcal_{n,m+a-b}$ and since $a+b=q-2\ell$, the operator $T_\theta$ connects only particle number sectors whose indices differ by $d\in\{-q,-q+2,\dots,q-2,q\}$ and by Cauchy--Schwarz we have 
    \[
    \sum_{m=0}^n \Vert v_{m+d}\Vert \Vert w_m\Vert \leqslant \Vert v\Vert\Vert w\Vert.
    \]
    Summing over the $q+1$ possible values of $d$ only changes the constant $C_q$. We thus have the operator-norm estimate on the whole Fock space $\Fcal_n$, 
    \[
    \Vert T_\theta\Vert  
    \leqslant  
    \sum_{\ell=0}^{\frac{q}{2}}
    \sum_{a+b=q-2\ell} C_{a,b}n^{\frac{a+b}{4}}\Vert B_{a,b}^{(\ell)}\Vert_{\mathrm{HS}}
    \leqslant C_q \sum_{\ell=0}^{\frac{q}{2}} n^{\frac{q-2\ell}{4}} 
    \left(  
        \sum_{a+b=q-2\ell} \Vert B_{a,b}^{(\ell)}\Vert_{\mathrm{HS}}^2
    \right)^{\frac{1}{2}}  
    \leqslant C_q n^{\frac{q}{4}}\Vert \vartheta\Vert_2
    \leqslant C_q n^{\frac{q}{4}}\Vert \theta\Vert_2,
    \]
    where we change the value of $C_q$ in each inequality to incorporate the combinatorial terms only depending on $q$. This inequality thus directly gives that
    \[
    \left( 
        \sum_{\substack{A\subset\unn{1}{N}\\\vert A\vert=q}} \vert \langle v,\Gamma_A w\rangle\vert^2
    \right)^{\frac{1}{2}} 
    = \sup_{\Vert \theta\Vert_2=1} \left\vert 
        \sum_{\vert A\vert =q}\theta_A \langle v,\Gamma_A w\rangle
    \right\vert 
    =
    \sup_{\Vert \theta\Vert_2 =1} \vert \langle v,T_\theta w\rangle\vert \leqslant \sup_{\Vert \theta\Vert_2=1}
    \Vert T_\theta\Vert\Vert v\Vert \Vert w\Vert \leqslant C_q n^{\frac{q}{4}}\Vert v\Vert \Vert w\Vert
    \]
    and after renormalization, updating $C_q$ a final time,
    \[
    \frac{1}{M}\sum_{\substack{A\subset\unn{1}{N}\\\vert A\vert=q}} \vert \langle v,\Gamma_A w\rangle\vert^2  
    \leqslant C_qN^{-q}N^{\frac{q}{2}}\Vert v\Vert^2\Vert w\Vert^2 = C_q N^{-\frac{q}{2}}\Vert v\Vert^2\Vert w \Vert^2
    \]
    giving the final result.
\end{proof}

\appendix
\appendixswitch

 \section{Additional technical results}
 \label{app:techres}

 In this appendix we collect the proofs of the elementary properties of the Stieltjes transform $m$ of the standard Gaussian law stated in Lemma~\ref{lem:gaussprofile}.

\begin{proof}[Proof of Lemma~\ref{lem:gaussprofile}]
    Differentiating under the integral sign gives the first identity in \eqref{eq:mkrepr}, and integrating by parts $k$ times gives the second one; in particular \eqref{eq:gaussderivbound} follows from the first identity. Since $\rho_G'(x)=-x\rho_G(x)$, an integration by parts yields
    \[
    m'(z)=\int_\Rbb\frac{\rho_G(x)}{(x-z)^2}\d x=\int_\Rbb \frac{\rho_G'(x)}{x-z}\d x=-\int_\Rbb \frac{x\rho_G(x)}{x-z}\d x=-1-zm(z),
    \]
    which is the first identity in \eqref{eq:gaussode}. Differentiating it $k$ times, and noticing that only the terms $i=0$ and $i=1$ survive in the Leibniz expansion of $(zm)^{(k)}$, gives the second one. Applying it with $k=j+1$ gives $L[-\frac{1}{j+1}m^{(j+1)}]=m^{(j)}$, and $m^{(j+1)}(z)\to0$ as $\Im z\to\infty$ by \eqref{eq:gaussderivbound}; uniqueness holds because the solutions of the homogeneous equation $L[h]=0$ are the multiples of $\e^{-\frac{z^2}{2}}$, which are unbounded along vertical lines unless they vanish identically.

    Finally, $\rho_G$ extends to the entire function $\rho_G(\zeta)=(2\pi)^{-\frac{1}{2}}\e^{-\frac{\zeta^2}{2}}$, which satisfies $\vert \rho_G(t-3\i)\vert = \e^{\frac{9}{2}}\rho_G(t)$ for $t\in\Rbb$. For $z\in\Cbb_+$ the map $\zeta\mapsto \rho_G(\zeta)(\zeta-z)^{-k-1}$ is holomorphic on the strip $\{-3\leqslant \Im\zeta\leqslant0\}$ and decays like a Gaussian as $\vert \Re\zeta\vert \to\infty$ there, so that we may shift the contour of integration in \eqref{eq:mkrepr} and obtain
    \[
    m^{(k)}(z)=k!\int_\Rbb \frac{\rho_G(t-3\i)}{(t-3\i-z)^{k+1}}\d t.
    \]
    The right-hand side is holomorphic in $\{\Im z>-3\}$, which gives the desired extension. Moreover, if $0<\Im z\leqslant2$ then $\vert t-3\i-z\vert \geqslant 3+\Im z\geqslant3$, and therefore $\vert m^{(k)}(z)\vert \leqslant k!\,3^{-k-1}\e^{\frac{9}{2}}$, which is \eqref{eq:gaussderivunif}.
\end{proof}

%\noindent\textbf{Data availability statement:} No data were generated or analyzed during this work.\\
%\textbf{Conflict of interest statement:} The authors declare that they have no conflict of interest.
\bibliographystyle{abbrv}
\bibliography{biblio}

% \bib, bibdiv, biblist are defined by the amsrefs package.
\begin{bibdiv}
\begin{biblist}

\bib{anschuetz2025}{article}{
      author={Anschuetz, E.~R.},
      author={Chen, C.-F.},
      author={Kiani, B.~T.},
      author={King, R.},
       title={Strongly interacting fermions are nontrivial yet nonglassy},
        date={2025Jul},
     journal={Phys. Rev. Lett.},
      volume={135},
       pages={030602},
         url={https://link.aps.org/doi/10.1103/cbqf-d24r},
}

\bib{berkooz2020susy}{article}{
      author={Berkooz, M.},
      author={Brukner, N.},
      author={Narovlansky, V.},
      author={Raz, A.},
       title={The double scaled limit of super-symmetric {SYK} models},
        date={2020},
     journal={J. High Energy Phys.},
      volume={2020},
      number={12},
       pages={110},
}

\bib{berkooz2019}{article}{
      author={Berkooz, M.},
      author={Isachenkov, M.},
      author={Narovlansky, V.},
      author={Torrents, G.},
       title={Towards a full solution of the large {$N$} double-scaled {SYK}
  model},
        date={2019},
     journal={J. High Energy Phys.},
      volume={2019},
      number={3},
       pages={079},
}

\bib{basu2026sharp}{article}{
      author={Basu, A.},
      author={Kothari, P.~K.},
      author={Midha, S.},
       title={Sharp bounds on ground state energy of the {SYK} model},
        date={2026},
     journal={arXiv preprint arXiv:2607.27185},
}

\bib{bozejko97}{article}{
      author={Bo\.zejko, M.},
      author={K\"ummerer, B.},
      author={Speicher, R.},
       title={{$q$}-{G}aussian processes: non-commutative and classical
  aspects},
        date={1997},
        ISSN={0010-3616,1432-0916},
     journal={Comm. Math. Phys.},
      volume={185},
      number={1},
       pages={129\ndash 154},
}

\bib{berkoozmamroud2025}{article}{
      author={Berkooz, M.},
      author={Mamroud, O.},
       title={A cordial introduction to double scaled {SYK}},
        date={2025feb},
     journal={Reports on Progress in Physics},
      volume={88},
      number={3},
       pages={036001},
         url={https://doi.org/10.1088/1361-6633/ada889},
}

\bib{brai1980replica}{article}{
      author={Bray, A.~J.},
      author={Moore, M.~A.},
       title={Replica theory of quantum spin glasses},
        date={1980Aug},
     journal={J. Phys. C},
      volume={13},
      number={24},
       pages={L655},
         url={https://doi.org/10.1088/0022-3719/13/24/005},
}

\bib{berkooz2021complex}{article}{
      author={Berkooz, M.},
      author={Narovlansky, V.},
      author={Raj, H.},
       title={Complex {S}achdev-{Y}e-{K}itaev model in the double scaling
  limit},
        date={2021},
     journal={J. High Energy Phys.},
      volume={2021},
      number={2},
       pages={113},
}

\bib{bozejko91}{article}{
      author={Bo\.zejko, M.},
      author={Speicher, R.},
       title={An example of a generalized {B}rownian motion},
        date={1991},
        ISSN={0010-3616,1432-0916},
     journal={Comm. Math. Phys.},
      volume={137},
      number={3},
       pages={519\ndash 531},
}

\bib{christiansen2024hilbert}{article}{
      author={Christiansen, M.~R.},
       title={Hilbert-{S}chmidt estimates for fermionic 2-body operators},
        date={2024},
        ISSN={0010-3616,1432-0916},
     journal={Comm. Math. Phys.},
      volume={405},
      number={1},
       pages={Paper No. 18, 9},
}

\bib{erdos2014phase}{article}{
      author={Erd\H{o}s, L.},
      author={Schr\"{o}der, D.},
       title={Phase transition in the density of states of quantum spin
  glasses},
        date={2014},
        ISSN={1385-0172,1572-9656},
     journal={Math. Phys. Anal. Geom.},
      volume={17},
      number={3-4},
       pages={441\ndash 464},
}

\bib{feng2019spectrum}{article}{
      author={Feng, R.},
      author={Tian, G.},
      author={Wei, D.},
       title={Spectrum of {SYK} model},
        date={2019},
        ISSN={2096-6075,2524-7182},
     journal={Peking Math. J.},
      volume={2},
      number={1},
       pages={41\ndash 70},
}

\bib{feng2020spectrum}{article}{
      author={Feng, R.},
      author={Tian, G.},
      author={Wei, D.},
       title={Spectrum of {SYK} model {III}: {L}arge deviations and
  concentration of measures},
        date={2020},
        ISSN={2010-3263,2010-3271},
     journal={Random Matrices Theory Appl.},
      volume={9},
      number={2},
       pages={2050001, 24},
}

\bib{feng2021spectrum}{article}{
      author={Feng, R.},
      author={Tian, G.},
      author={Wei, D.},
       title={Spectrum of {SYK} model {II}: central limit theorem},
        date={2021},
        ISSN={2010-3263,2010-3271},
     journal={Random Matrices Theory Appl.},
      volume={10},
      number={4},
       pages={Paper No. 2150037, 30},
}

\bib{garciagarcia2018}{article}{
      author={Garc\'{i}a-Garc\'{i}a, A.~M.},
      author={Jia, Y.},
      author={Verbaarschot, J. J.~M.},
       title={Exact moments of the {S}achdev-{Y}e-{K}itaev model up to order
  {$1/N^2$}},
        date={2018},
     journal={J. High Energy Phys.},
      volume={2018},
      number={4},
       pages={146},
}

\bib{garciagarcia2016}{article}{
      author={Garc\'{i}a-Garc\'{i}a, A.~M.},
      author={Verbaarschot, J. J.~M.},
       title={Spectral and thermodynamic properties of the
  {S}achdev-{Y}e-{K}itaev model},
        date={2016},
     journal={Phys. Rev. D},
      volume={94},
       pages={126010},
}

\bib{garciagarcia2017}{article}{
      author={Garc\'{i}a-Garc\'{i}a, A.~M.},
      author={Verbaarschot, J. J.~M.},
       title={Analytical spectral density of the {S}achdev-{Y}e-{K}itaev model
  at finite {$N$}},
        date={2017},
     journal={Phys. Rev. D},
      volume={96},
       pages={066012},
}

\bib{gamarnik2026free}{article}{
      author={Gamarnik, D.},
      author={Pernice, F.},
      author={Schmidhuber, A.},
      author={Zlokapa, A.},
       title={The free energy limit of the {SYK} model at high temperature},
        date={2026},
     journal={arXiv preprint arXiv:2605.02768},
}

\bib{he2026spectral}{article}{
      author={He, Y.},
       title={The spectral edge of the quartic {SYK} model},
        date={2026},
     journal={arXiv preprint arXiv:2607.18998},
}

\bib{haque2019}{article}{
      author={Haque, M.},
      author={McClarty, P.~A.},
       title={Eigenstate thermalization scaling in {M}ajorana clusters: From
  chaotic to integrable {S}achdev-{Y}e-{K}itaev models},
        date={2019Sep},
     journal={Phys. Rev. B},
      volume={100},
       pages={115122},
         url={https://link.aps.org/doi/10.1103/PhysRevB.100.115122},
}

\bib{hastings2022}{inproceedings}{
      author={Hastings, M.~B.},
      author={O'Donnell, R.},
       title={Optimizing strongly interacting fermionic {H}amiltonians},
        date={2022},
   booktitle={S{TOC} '22---{P}roceedings of the 54th {A}nnual {ACM} {SIGACT}
  {S}ymposium on {T}heory of {C}omputing},
   publisher={ACM, New York},
       pages={776\ndash 789},
}

\bib{jia2018}{article}{
      author={Jia, Y.},
      author={Verbaarschot, J. J.~M.},
       title={Large {$N$} expansion of the moments and free energy of
  {S}achdev-{Y}e-{K}itaev model, and the enumeration of intersection graphs},
        date={2018},
     journal={J. High Energy Phys.},
      volume={2018},
      number={11},
       pages={031},
}

\bib{jia2020}{article}{
      author={Jia, Y.},
      author={Verbaarschot, J. J.~M.},
       title={Spectral fluctuations in the {S}achdev-{Y}e-{K}itaev model},
        date={2020},
     journal={J. High Energy Phys.},
      volume={2020},
      number={7},
       pages={193},
}

\bib{Kitaev1}{misc}{
      author={Kitaev, A.},
       title={A simple model of quantum holography, part {I}},
        date={2015},
         url={https://online.kitp.ucsb.edu/online/entangled15/kitaev/},
        note={Talk at the Kavli Institute for Theoretical Physics (KITP)},
}

\bib{Kitaev2}{misc}{
      author={Kitaev, A.},
       title={A simple model of quantum holography, part {II}},
        date={2015},
         url={https://online.kitp.ucsb.edu/online/entangled15/kitaev/},
        note={Talk at the Kavli Institute for Theoretical Physics (KITP)},
}

\bib{keating2015}{article}{
      author={Keating, J.~P.},
      author={Linden, N.},
      author={Wells, H.~J.},
       title={Random matrices and quantum spin chains},
        date={2015},
        ISSN={1024-2953},
     journal={Markov Process. Related Fields},
      volume={21},
      number={3},
       pages={537\ndash 555},
}

\bib{Kanazawa2017}{article}{
      author={Kanazawa, T.},
      author={Wettig, T.},
       title={{Complete random matrix classification of {SYK} models with
  $\mathcal{N}= 0, 1$ and 2 supersymmetry}},
        date={2017Sep},
        ISSN={1029-8479},
     journal={J. High Energ. Phys.},
      volume={2017},
      number={9},
       pages={50},
         url={https://doi.org/10.1007/JHEP09(2017)050},
}

\bib{maldacena2016remarks}{article}{
      author={Maldacena, J.},
      author={Stanford, D.},
       title={Remarks on the {S}achdev--{Y}e--{K}itaev model},
        date={2016Nov},
     journal={Phys. Rev. D},
      volume={94},
       pages={106002},
         url={https://link.aps.org/doi/10.1103/PhysRevD.94.106002},
}

\bib{polchinski2016spectrum}{article}{
      author={Polchinski, J.},
      author={Rosenhaus, V.},
       title={The spectrum in the {S}achdev--{Y}e--{K}itaev model},
        date={2016},
     journal={J. High Energy Phys.},
      volume={2016},
      number={4},
       pages={1\ndash 25},
}

\bib{Sonner2017}{article}{
      author={Sonner, J.},
      author={Vielma, M.},
       title={Eigenstate thermalization in the {S}achdev-{Y}e-{K}itaev model},
        date={2017Nov},
        ISSN={1029-8479},
     journal={J. High Energy Phys.},
      volume={2017},
      number={11},
       pages={149},
         url={https://doi.org/10.1007/JHEP11(2017)149},
}

\bib{sun2020periodic}{article}{
      author={Sun, F.},
      author={Ye, J.},
       title={{Periodic table of the ordinary and supersymmetric
  Sachdev-Ye-Kitaev models}},
        date={2020},
     journal={Phys. Rev. Lett.},
      volume={124},
      number={24},
       pages={244101},
}

\bib{sachdevye}{article}{
      author={Sachdev, S.},
      author={Ye, J.},
       title={Gapless spin-fluid ground state in a random quantum {H}eisenberg
  magnet},
        date={1993May},
     journal={Phys. Rev. Lett.},
      volume={70},
       pages={3339\ndash 3342},
         url={https://link.aps.org/doi/10.1103/PhysRevLett.70.3339},
}

\bib{visconti2026}{article}{
      author={Visconti, F. L.~A.},
       title={{H}ilbert--{S}chmidt norm estimates for fermionic reduced density
  matrices},
        date={2026Mar},
        ISSN={1424-0661},
     journal={Ann. Henri Poincar\'e},
         url={https://doi.org/10.1007/s00023-026-01686-z},
}

\bib{you2017sachdev}{article}{
      author={You, Y.-Z.},
      author={Ludwig, A. W.~W.},
      author={Xu, C.},
       title={{S}achdev-{Y}e-{K}itaev model and thermalization on the boundary
  of many-body localized fermionic symmetry-protected topological states},
        date={2017Mar},
     journal={Phys. Rev. B},
      volume={95},
       pages={115150},
         url={https://link.aps.org/doi/10.1103/PhysRevB.95.115150},
}

\end{biblist}
\end{bibdiv}
\end{document}